\numberwithin{equation}{section}
\newtheorem{theorem}{Theorem}[section]
\newtheorem{proposition}[theorem]{Proposition}
\newtheorem{corollary}[theorem]{Corollary}
\newtheorem{lemma}[theorem]{Lemma}
\theoremstyle{definition}
\newtheorem{remark}[theorem]{Remark}
\newtheorem{example}[theorem]{Example}
\newtheorem{definition}[theorem]{Definition}
\let\oldmarginpar\marginpar
\renewcommand\marginpar[1]{\-\oldmarginpar[\raggedleft\small\sf
#1]{\raggedright\small\sf #1}}
\newcommand{\myAA}{\mathcal{A}}
\newcommand{\FF}{\mathcal{F}}
\newcommand{\ZZ}{\mathbb{Z}}
\newcommand{\RR}{\mathbb{R}}
\newcommand{\QQ}{\mathbb{Q}}
\newcommand{\mat}[4]{\left(\!\!\begin{array}{cc}
#1 & #2 \\ #3 & #4 \\
\end{array}\!\!\right)}
\newcommand{\sh}{{rsh}}
\newcommand{\SH}{{sh}}
\def\Tiny{ \font\Tinyfont = cmr10 at 7pt \relax  \Tinyfont}
\newcommand*\Tinycircled[1]{\tikz[baseline=(char.base)]{
            \node[shape=circle,draw,inner sep=.5pt] (char) {\Tiny #1};}}
\newcommand*\circled[1]{\tikz[baseline=(char.base)]{
            \node[shape=circle,draw,inner sep=.5pt] (char) {#1};}}
\begin{document}

\title[Greedy elements in rank 2 cluster algebras]
{Greedy elements in rank 2 cluster algebras}

\author{Kyungyong Lee}
\address{\noindent Department of Mathematics, Wayne State University, Detroit, MI 48202, USA}
\email{klee@math.wayne.edu}

\author{Li Li}
\address{\noindent Department of Mathematics and Statistics, Oakland University, Rochester, MI 48309, USA}
\email{li2345@oakland.edu}

\author{Andrei Zelevinsky}
\address{\noindent Department of Mathematics, Northeastern University,
Boston, MA 02115, USA}
\email{andrei@neu.edu}

\subjclass[2010]{Primary 13F60}

\date{August 25, 2012; revised November 5, 2012}

\thanks{Research 
supported in part by NSF grants DMS-0901367 (K.~L.) and DMS-1103813 (A.~Z.)}

\begin{abstract}
 A lot of recent activity in the theory of cluster algebras has been directed towards various constructions of ``natural" bases in them. One of the approaches to this problem was developed several years ago by P.~Sherman - A.~Zelevinsky who have shown that the indecomposable positive elements form an integer basis in any rank 2 cluster algebra of finite or affine type. It is strongly suspected (but not proved) that this property does not extend beyond affine types. Here we go around this difficulty by constructing a new basis in any rank 2 cluster algebra that we call the greedy basis. It consists of a special family of indecomposable positive elements that we call greedy elements. Inspired by a recent work of K.~Lee - R.~Schiffler and D.~Rupel, we give explicit combinatorial expressions for greedy elements using the language of Dyck paths.
\end{abstract}

\maketitle

\bigskip


\section{Introduction and main results}
\label{sec:intro}

The original motivation for the study of cluster algebras initiated
in~\cite{fz-ClusterI} was to design an
algebraic framework for understanding \emph{total positivity} and
\emph{canonical bases} associated by G.~Lusztig to any semisimple algebraic group. 
A lot of recent activity in the field has been directed towards various constructions of ``natural" bases in cluster algebras.
An overview of these approaches with relevant references can be found in \cite{plamondon}.

This paper builds upon the approach developed in \cite{sz-Finite-Affine}, where it was shown that the indecomposable positive elements 
form a $\ZZ$-basis in any rank 2 cluster algebra of finite or affine type (the definitions will be recalled in a moment). 
The authors of \cite{sz-Finite-Affine} have suspected that this property does \emph{not} extend beyond affine types (we share this suspicion although are still unable to confirm it decisively). 
In an unpublished follow-up to \cite{sz-Finite-Affine} they have introduced a special family of \emph{greedy elements} in (the completion of) an arbitrary rank~2 cluster algebra $\myAA$, and made several conjectures about them, including the claim that all these elements are indecomposable positive elements, and that they form a $\ZZ$-basis in $\myAA$. 

This paper is devoted to the study of greedy elements. 
In particular, we prove all the conjectures mentioned above. 
The key new ingredient is an explicit combinatorial expression for greedy elements inspired by an expression for cluster variables given in 
\cite{ls-comm,ls-noncomm,r-noncomm}.

Now we introduce our setup and state our main results.  
Let $\FF = \QQ(x_1,x_2)$ be the field of rational functions in two
(commuting) independent variables $x_1$ and $x_2$ with rational
coefficients. Given positive integers $b$ and $c$, recursively
define elements $x_m \in \FF$ for $m \in \ZZ$ by the relations
\begin{equation}
\label{eq:clusterrelations}
x_{m-1} x_{m+1} =
\left\{
\begin{array}[h]{ll}
x_m^b + 1 & \quad \mbox{for  $m$ odd;} \\
x_m^c + 1 & \quad \mbox{for $m$ even.}
\end{array}
\right.
\end{equation}
The (coefficient-free) cluster algebra $\myAA=\myAA(b,c)$ is, by definition the
subring of $\FF$ generated by the $x_m$ for all $m \in \ZZ$. The
elements $x_m$ are called \emph{cluster variables} and the
relations (\ref{eq:clusterrelations}) are called the
\emph{exchange relations}. The sets $\{x_m,x_{m+1}\}$ for
$m\in\ZZ$ are called \emph{clusters}, and an element of the form $x_m^{d_1} x_{m+1}^{d_2}$ with
$d_1, d_2 \geq 0$ is called a \emph{cluster monomial} at a cluster $\{x_m,x_{m+1}\}$.

It is clear from (\ref{eq:clusterrelations}) that every cluster of
$\myAA$ is a free system of generators of the ambient field $\FF$, so
for every $m \in \ZZ$, each element of $\myAA$ is uniquely expressed as a rational
function in $x_m$ and $x_{m+1}$. According
to the \emph{Laurent phenomenon} established in
\cite{fz-ClusterI,fz-Laurent}, all these rational functions are
actually Laurent polynomials with integer coefficients. The
following stronger result is a special case of the results
in~\cite{fz-ClusterIII}:
\begin{equation}
\label{eq:upper} 
\myAA = \bigcap_{m \in \ZZ} \ZZ[x_m^{\pm 1},
x_{m+1}^{\pm 1}] = \bigcap_{m = 0}^2 \ZZ[x_m^{\pm 1}, x_{m+1}^{\pm
1}],
\end{equation}
where $\ZZ[x_m^{\pm 1}, x_{m+1}^{\pm 1}]$ denotes the ring of
Laurent polynomials with integer coefficients in $x_m$ and
$x_{m+1}$.    The symmetry of the exchange relations
(\ref{eq:clusterrelations}) allows the second intersection in
(\ref{eq:upper}) to be taken over any three consecutive clusters.

We say that a non-zero element $x\in\myAA$ is
\emph{positive at a cluster $\{x_m,x_{m+1}\}$} if all the coefficients in
the expansion of $x$ as a Laurent polynomial in $x_m$ and
$x_{m+1}$ are positive.
We say that $x \in\myAA$ is \emph{positive} if it is positive at all the clusters. 
Thus the set of positive elements in $\myAA$ is equal to $\myAA_+ - \{0\}$, where 
\begin{equation}
\label{eq:upper-positive} \myAA_+ = \bigcap_{m \in \ZZ} \ZZ_{\geq 0}[x_m^{\pm 1},
x_{m+1}^{\pm 1}] \ .
\end{equation}
Clearly, $\myAA_+$ is a \emph{semiring}, i.e., it is closed under addition and multiplication.
We are interested in the additive structure of $\myAA_+$; following \cite{sz-Finite-Affine}, we introduce the following 
important definition. 

\begin{definition}
\label{de:positive-indecomposables} 
A positive element $x\in\myAA$ is
\emph{indecomposable} if it cannot be expressed as the sum of two positive elements. 
\end{definition}

Recall that $\myAA(b,c)$ is of \emph{finite} (resp. \emph{affine}) type if $bc \leq 3$ (resp. $bc = 4$). 
One of the main results of \cite{sz-Finite-Affine} is the following: if $\myAA = \myAA(b,c)$ is of finite or affine type then indecomposable positive elements form 
a $\ZZ$-basis in $\myAA$, and this basis contains all cluster monomials.  
However in the ``wild case" $bc \geq 5$ the situation becomes much more complicated; in particular, we expect the set of indecomposable positive elements to be linearly dependent. 

The main difficulty in studying positive elements stems from the fact that in general they do not allow a ``local" definition. 
Namely, the last equality in \eqref{eq:upper} makes it very easy to check whether a given element of $\FF$ belongs to $\myAA$. 
In contrast to this, it was shown in
\cite[Remark 5.8]{sz-Finite-Affine} that already in the case $b=c=2$ there exist nonpositive
elements of $\myAA$ that are positive at any given finite set of clusters.

To deal with this difficulty we restrict our attention to a special family of elements of $\myAA$. 

\begin{definition}
\label{df:pointed}
An element $x \in \myAA(b,c)$ is pointed at $(a_1, a_2) \in \ZZ^2$ if it has the form 
\begin{equation}
\label{eq:pointed-expansion}
x=x_1^{-a_1}  x_2^{-a_2} \sum_{p,q \geq 0} c(p,q) x_1^{bp} x_2^{cq}	
\end{equation}
with $c(p,q) \in \ZZ$ for all $p$ and $q$, and $c(0,0)=1$. 
\end{definition}

This definition is motivated by the results of \cite{sz-Finite-Affine} where it was shown that, for $bc \leq 4$, every indecomposable positive element 
in $\myAA(b,c)$ is  pointed at some $(a_1, a_2) \in \ZZ^2$. 

Now we are ready to introduce our main object of study, viz. \emph{greedy elements}.
In the following definition and throughout the paper, we use the conventions that the binomial coefficient $\binom{a}{k} $ is
zero unless $0 \leq k \leq a$,  and an empty sum is $0$.

\begin{definition}
\label{df:greedy}
An element $x \in \myAA$ is greedy at $(a_1, a_2) \in \ZZ^2$ if it is pointed at $(a_1, a_2)$, 
and the coefficients $c(p,q)$ in the expansion \eqref{eq:pointed-expansion} satisfy the recurrence relation
\begin{equation}
\label{eq:c-recurrence} 
\aligned
c(p,q)= \max &\left( \sum_{k=1}^p (-1)^{k-1}
c(p-k,q) \binom{a_2\!-\!cq\!+\!k\!-\!1}{k}, \right.\\
&\left.\quad \sum_{k=1}^q
(-1)^{k-1} c(p,q-k) \binom{a_1\!-\!bp\!+\!k\!-\!1}{k}\right)\\
\endaligned
\end{equation}
for every nonzero pair of indices $(p,q) \in \ZZ_{\geq 0}^2$.
\end{definition}

\begin{remark}
\label{rem:greedy}
(a) It is clear that, for a given $(a_1,a_2)$ the relation \eqref{eq:c-recurrence} 
determines $x$ uniquely. 
Thus we can and will use the notation $x = x[a_1,a_2]$. 
In particular, if both $a_1$ and $a_2$ are nonpositive, then $x[a_1,a_2] = x_1^{-a_1} x_2^{-a_2}$, since in this case
all the binomial coefficients appearing in \eqref{eq:c-recurrence} are equal to $0$.  
Thus, every cluster monomial in the initial cluster $\{x_1,x_2\}$ is a greedy element. 
If exactly one of $a_1$ and $a_2$ is nonpositive, it is not hard to show that $x[a_1,a_2]$ is given by \eqref{eq:2-3-quarter} below.  
%
%
However if both $a_1$ and $a_2$ are positive then the existence of $x[a_1,a_2]$ is much less trivial: one has to show
that only finitely many of the coefficients $c(p,q)$ determined by \eqref{eq:c-recurrence} are nonzero (so that $x[a_1,a_2]$
is indeed a Laurent polynomial in $x_1$ and $x_2$), and that $x[a_1,a_2] \in \myAA$.

\smallskip


(b) As stated, the notion of a greedy element depends on the choice of an initial cluster $\{x_1,x_2\}$, so strictly speaking we
should have included something like ``greedy with respect to $\{x_1,x_2\}$." 
However we will show (see Theorem~\ref{main theorem}(d)  below) that the family of greedy elements is independent of this choice.
\end{remark}

The following proposition provides a motivation for the concept of greedy elements, and also for the term ``greedy."

\begin{proposition}
\label{pr:inequality-c(p,q)}  
Suppose  $x \in \myAA$ is pointed at $(a_1, a_2) \in \ZZ^2$, and is positive at three consecutive clusters $\{x_0,x_1\}$, 
$\{x_1,x_2\}$, and $\{x_2,x_3\}$. 
Then for every nonzero pair of indices $(p,q) \in \ZZ_{\geq 0}^2$, we have the following inequality:
\begin{equation}
\label{eq:c-recurrence inequality} 
\aligned
c(p,q)\ge\max &\left( \sum_{k=1}^p (-1)^{k-1}
c(p-k,q) \binom{a_2\!-\!cq\!+\!k\!-\!1}{k}, \right.\\
&\left.\quad \sum_{k=1}^q
(-1)^{k-1} c(p,q-k) \binom{a_1\!-\!bp\!+\!k\!-\!1}{k}\right)\ .\\
\endaligned
\end{equation}
\end{proposition}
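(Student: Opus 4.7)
The plan is to derive each of the two lower bounds in \eqref{eq:c-recurrence inequality} from positivity of $x$ at the corresponding outer cluster ($\{x_0,x_1\}$ or $\{x_2,x_3\}$), while positivity at $\{x_1,x_2\}$ handles the degenerate cases where a bound collapses to $0$. Indeed, positivity of $x$ at $\{x_1,x_2\}$ means exactly that $c(p,q) \geq 0$ for all $p,q$, and this covers the ranges $cq \geq a_2$ (respectively $bp \geq a_1$), in which every binomial factor in the first (respectively second) sum in \eqref{eq:c-recurrence inequality} vanishes by our convention, so the lower bound is just $0$.

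For the nontrivial range $cq < a_2$, substitute $x_2 = (1 + x_1^b)/x_0$ into \eqref{eq:pointed-expansion} to obtain
\[
x = \sum_{p, q \geq 0} c(p, q)\, x_1^{bp - a_1}\, x_0^{a_2 - cq}\, (1 + x_1^b)^{cq - a_2}
\]
as an identity in $\FF$. Since $cq - a_2 < 0$, the factor $(1+x_1^b)^{cq-a_2}$ expands inside the formal Laurent series ring $\QQ(x_0)((x_1))$ as $\sum_{k \geq 0} (-1)^k \binom{a_2 - cq + k - 1}{k} x_1^{bk}$, with standard (nonnegative) binomial coefficients. Collecting the resulting expansion by monomials $x_0^{a_2 - cq} x_1^{br - a_1}$, the coefficient of such a monomial in $x$ equals
\[
\sum_{k = 0}^{r} (-1)^k\, c(r - k, q) \binom{a_2 - cq + k - 1}{k}.
\]
Because $\FF$ embeds in $\QQ(x_0)((x_1))$ in a way that preserves coefficients of Laurent polynomials, this formula also computes the coefficient of $x_0^{a_2-cq}x_1^{br-a_1}$ in the honest $\{x_0,x_1\}$-Laurent expansion of $x$. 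Positivity at $\{x_0,x_1\}$ forces this sum to be $\geq 0$, and isolating the $k = 0$ term rearranges it into exactly the first lower bound in \eqref{eq:c-recurrence inequality}. The second lower bound follows by the symmetric argument, substituting $x_1 = (1+x_2^c)/x_3$, expanding inside $\QQ(x_3)((x_2))$, and using positivity at $\{x_2,x_3\}$.

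The main subtlety is that when $cq < a_2$ the factor $(1+x_1^b)^{cq-a_2}$ is a genuine rational function rather than a Laurent polynomial, so no individual summand of the substituted expression is itself a Laurent polynomial in $\{x_0, x_1\}$; the integral $\{x_0, x_1\}$-expansion only emerges after summing over $(p, q)$ and cancelling the $(1+x_1^b)$-denominators. Working inside the formal Laurent series completion $\QQ(x_0)((x_1))$ sidesteps this cleanly: each monomial coefficient becomes a finite sum (since only finitely many $c(p,q)$ are nonzero), and matches the integer coefficient of $x$ in its honest Laurent polynomial expansion.
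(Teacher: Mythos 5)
Your argument is correct and is essentially the paper's own proof: substitute the exchange relation to pass to an adjacent cluster, expand the negative binomial power, recognize the alternating sum $\sum_{k\geq 0}(-1)^k c(\cdot)\binom{\cdot}{k}$ as a coefficient in that cluster's Laurent expansion, and invoke positivity there (with positivity at $\{x_1,x_2\}$ covering the degenerate range where the binomial coefficients vanish). The only difference is cosmetic — you work out the first bound explicitly via $\{x_0,x_1\}$ and get the second by symmetry, whereas the paper does the mirror image via $\{x_2,x_3\}$.
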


As stated in Remark~\ref{rem:greedy} (a), $c(p,q)$ is easy to compute unless both $a_1$ and $a_2$ are positive.
In the latter case one of the difficulties in dealing with the recurrence relation \eqref{eq:c-recurrence} is the fact that its right hand side is the maximum 
of two linear forms.
Our next result shows that  \eqref{eq:c-recurrence} can be sharpened as follows.

\begin{proposition}
\label{pr:B-introduction-linear}
Let $a_1$ and $a_2$ be positive integers.
The rule \eqref{eq:c-recurrence} is equivalent to
\begin{equation}
\label{eq:c-recurrence2} 
c(p,q)= \left\{
\begin{array}[h]{ll}
\displaystyle \sum_{k=1}^p (-1)^{k-1}
c(p-k,q)\binom{a_2\!-\!cq\!+\!k\!-\!1}{k}, & \quad
\mbox{{\rm if} $ca_1q\leq ba_2p;$}
\\
\displaystyle \sum_{k=1}^q (-1)^{k-1}
 c(p,q-k)\binom{a_1\!-\!bp\!+\!k\!-\!1}{k}, & \quad
\mbox{{\rm if} $ca_1q\geq ba_2p$}
\end{array}
\right.
\end{equation}
for every non-zero pair of indices $(p,q) \in \ZZ_{\geq 0}^2$. 
\end{proposition}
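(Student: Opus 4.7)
The plan is by strong induction on $p+q$, the base $(p,q)=(0,0)$ being trivial. Set
\[
U(p,q) := \sum_{k=1}^p (-1)^{k-1} c(p-k,q)\binom{a_2\!-\!cq\!+\!k\!-\!1}{k}, \qquad V(p,q) := \sum_{k=1}^q (-1)^{k-1} c(p,q-k)\binom{a_1\!-\!bp\!+\!k\!-\!1}{k};
\]
the equivalence of \eqref{eq:c-recurrence} and \eqref{eq:c-recurrence2} reduces to the claim that $U(p,q) \geq V(p,q)$ when $ca_1 q \leq ba_2 p$, with equality on the line $ca_1 q = ba_2 p$. The reverse inequality under $ca_1 q \geq ba_2 p$ then follows from the involution swapping $(p,a_1,b) \leftrightarrow (q,a_2,c)$, which also swaps $U \leftrightarrow V$, so it suffices to establish only this one half.

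By the binomial convention of the paper, every summand of $U(p,q)$ vanishes once $cq \geq a_2$, and every summand of $V(p,q)$ vanishes once $bp \geq a_1$. This disposes of three cases immediately: if $cq \geq a_2$ and $bp \geq a_1$ then $U=V=0$; if $cq \geq a_2$ and $bp < a_1$ then $U=0$ while the inequalities force $ca_1 q \geq a_1 a_2 > ba_2 p$, matching the $V$-branch of \eqref{eq:c-recurrence2}; and the remaining off-diagonal case is symmetric. So throughout the rest of the argument one may assume $bp < a_1$ and $cq < a_2$, where both $U$ and $V$ are genuine alternating sums with no binomial vanishings.

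In this remaining case the inductive hypothesis supplies the simplified rule \eqref{eq:c-recurrence2} for each $c(p-k,q)$ and $c(p,q-k)$ that appears on the right. Substituting those expansions rewrites $U(p,q) - V(p,q)$ as a double sum of products of binomial coefficients times earlier values $c(p',q')$. The plan is to collapse this double sum via Chu--Vandermonde-type identities so as to isolate an explicit factor proportional to $ba_2 p - ca_1 q$; that factor is nonnegative under our hypothesis and vanishes precisely on the line $ca_1 q = ba_2 p$, which accounts for both the inequality and its equality case in a single stroke.

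The principal obstacle is the bookkeeping introduced by the case split inside the inductive step: even with $(p,q)$ strictly below the line $ca_1 q = ba_2 p$, a neighbor $(p-k,q)$ can lie strictly above the line $ca_1 q = ba_2(p-k)$, so the inductive formula for $c(p-k,q)$ bifurcates into the $U$- or $V$-branch of \eqref{eq:c-recurrence2} depending on $k$, and likewise for $c(p,q-k)$. Showing that the contributions from the two branches recombine cleanly into a single expression bearing the promised factor $ba_2 p - ca_1 q$ is the combinatorial crux of the argument, and is the step I expect to consume most of the technical work.
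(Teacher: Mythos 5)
Your reduction of the equivalence to the single claim ``$U(p,q)\ge V(p,q)$ whenever $ca_1q\le ba_2p$, with equality on the line'' is correct, and the $(p,a_1,b)\leftrightarrow(q,a_2,c)$ symmetry is legitimate. But the proof has two genuine gaps. First, the off-diagonal cases are not ``disposed of immediately.'' In the case $bp\ge a_1$, $cq<a_2$ one has $ca_1q<ba_2p$ and $V=0$, so the equivalence requires $U\ge 0$ --- the nonnegativity of a genuinely alternating sum --- and symmetrically the case $cq\ge a_2$, $bp<a_1$ requires $V\ge 0$. Nothing in your setup delivers this; it is exactly the point where the paper has to invoke nontrivial global information (in Section 5, Case 1, the vanishing of $c(p,q)$ in that regime is deduced from the support bound of Proposition~\ref{prop:support} via Corollary~\ref{cor:out-of-1st-quadrant}, not from the recurrence alone). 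Second, and more seriously, the central step --- substituting the inductive formulas into $U(p,q)-V(p,q)$ and collapsing the resulting double sum by Chu--Vandermonde identities into an expression carrying a visible factor of $ba_2p-ca_1q$ --- is announced as a plan, not carried out. You yourself identify it as ``the combinatorial crux'' and ``the step I expect to consume most of the technical work.'' There is no evidence that such a factorization exists, and the branching of the inductive hypothesis across the line $ca_1q=ba_2p$ (which you correctly flag) makes it doubtful that a purely formal manipulation of the recurrences can close the argument.

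For comparison, the paper does not attempt any direct manipulation of the alternating sums. It defines $c(p,q)$ combinatorially by counting compatible pairs on a maximal Dyck path \eqref{eq:cpq-dyck}, proves the linear recurrence \eqref{eq:c-recurrence2} for these coefficients using the support bounds of Section 4 (statement \eqref{eq:cpq-dyck-recurrences}), and separately proves the symmetry \eqref{eq:cpq-dyck-symmetry}, which puts the element in $\myAA$ and makes it positive; Proposition~\ref{pr:inequality-c(p,q)} then gives $c(p,q)\ge\max(U,V)$, and combining the two yields $c(p,q)=\max(U,V)$. The equivalence of the two rules follows because both determine the coefficients uniquely and admit this common solution. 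The inputs your induction is missing (nonnegativity of $U$ and $V$ in the relevant regimes, and the comparison of $U$ with $V$ off the line) are precisely what the Dyck-path model and the positivity at adjacent clusters supply. To salvage your approach you would need either to exhibit the claimed factorization explicitly or to import an independent proof of positivity and of the support bounds.
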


The following theorem summarizes our main results about greedy elements. 

\begin{theorem}
\label{main theorem}
{\rm(a)} For each $(a_1,a_2) \in \ZZ^2$, there exists a (unique) greedy element $x[a_1, a_2] \in \myAA$ at $(a_1, a_2)$.  

{\rm(b)} All greedy elements are indecomposable positive elements.

{\rm(c)} The greedy elements $x[a_1,a_2]$ for $(a_1, a_2) \in \ZZ^2$ form a $\ZZ$-basis in $\myAA$, which we refer to as the greedy basis.

{\rm(d)} The greedy basis is independent of the choice of an initial cluster.

 {\rm(e)} The greedy basis contains all cluster monomials.
\end{theorem}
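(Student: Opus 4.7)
My plan is to construct an explicit combinatorial candidate for $x[a_1, a_2]$ in the nontrivial case $a_1, a_2 > 0$ via compatible pairs on a Dyck path (following \cite{ls-comm,ls-noncomm,r-noncomm}), and then to bootstrap all five assertions of the theorem from the interplay between this formula, the recurrence \eqref{eq:c-recurrence}, and Proposition \ref{pr:inequality-c(p,q)}.

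For part (a), uniqueness is immediate from \eqref{eq:c-recurrence} together with $c(0,0)=1$. The cases $a_1 \le 0$ or $a_2 \le 0$ are handled by Remark \ref{rem:greedy}(a). When $a_1, a_2 > 0$, I would define a Laurent polynomial $X$ of the shape \eqref{eq:pointed-expansion} whose coefficients $c(p,q)$ are given by a Dyck-path count, and verify (i) finiteness of support (bounded roughly by $p \le a_2$ and $q \le a_1$), (ii) that $X$ satisfies the sharpened recurrence \eqref{eq:c-recurrence2} of Proposition \ref{pr:B-introduction-linear} and hence also \eqref{eq:c-recurrence}, and (iii) that $X \in \myAA$, by expanding $X$ as a Laurent polynomial in each of the three clusters $\{x_0, x_1\}$, $\{x_1, x_2\}$, $\{x_2, x_3\}$ and invoking \eqref{eq:upper}. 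Step (ii) will be the main combinatorial obstacle: it requires a careful case analysis according to the sign of $c a_1 q - b a_2 p$ together with a bijective or sign-reversing argument on compatible pairs.

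For part (b), positivity at the initial cluster is manifest from the combinatorial formula. Positivity at an arbitrary cluster can be obtained by induction on the distance from $\{x_1, x_2\}$ using the exchange relations, provided one checks the resulting coefficients remain nonnegative (which again reduces to a Dyck-path identity). For indecomposability, suppose $x[a_1, a_2] = y + z$ with $y, z$ nonzero positive elements. A leading-term analysis in $\{x_1, x_2\}$ forces both $y$ and $z$ to be pointed at suitable lattice points dominated by $(a_1, a_2)$. Applying Proposition \ref{pr:inequality-c(p,q)} to each summand yields lower bounds on their coefficients whose sum matches the greedy values; because the greedy recurrence is the tight form of the inequality in \eqref{eq:c-recurrence inequality}, a straightforward induction on $(p,q)$ forces one of $y, z$ to vanish.

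Linear independence in (c) is immediate from the distinct leading monomials $x_1^{-a_1} x_2^{-a_2}$, and spanning follows by an inductive peeling argument on the Newton polytope of any $f \in \myAA$: repeatedly subtract a $\ZZ$-multiple of the greedy element matching the extremal pointed vertex. The subtlest part is (d); my plan is to establish an intrinsic, cluster-free characterization of $x[a_1, a_2]$, for instance as the unique indecomposable positive element in $\myAA$ whose coefficients in any given cluster expansion are minimal among positive elements with the same pointing data (the required minimality being supplied by Proposition \ref{pr:inequality-c(p,q)}). Such a characterization does not distinguish the initial cluster, so the greedy basis is cluster-invariant. Part (e) is then immediate: for any $m$, a cluster monomial $x_m^{d_1} x_{m+1}^{d_2}$ is a greedy element with respect to the cluster $\{x_m, x_{m+1}\}$ by Remark \ref{rem:greedy}(a), and hence, by (d), a greedy element in the sense of the original initial cluster $\{x_1, x_2\}$.
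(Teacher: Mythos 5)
Your high-level architecture matches the paper's for (a), (b) and (e): take the Dyck-path formula as the definition, verify the recurrence, and use Proposition~\ref{pr:inequality-c(p,q)} for indecomposability. But the proposal leaves out the single ingredient on which almost everything in the paper actually hinges, namely the symmetry property \eqref{eq:sigma-Q} for the combinatorially defined elements: $\sigma_2(x[a_1,a_2]) = x[b[a_2]_+ - a_1, a_2]$ (and its $\sigma_1$ analogue). This one identity simultaneously gives membership in $\myAA$ via \eqref{eq:upper}, positivity at \emph{every} cluster (by iterating the automorphisms $\sigma_p$), part (d) (changing the initial cluster just applies some $\sigma_p$ to the family), and part (e). You touch each of these points separately — ``expand in three clusters,'' ``induction on the distance from $\{x_1,x_2\}$ \ldots reduces to a Dyck-path identity,'' an ``intrinsic characterization'' for (d) — without identifying the common nontrivial statement underneath, and that statement is where the bulk of the paper's work lies (the shadow/remote-shadow decomposition and the bijection $\theta$ of Section~\ref{1st technical statement}). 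Your plan for (d) is moreover circular as stated: to show the greedy element is coefficient-minimal in an arbitrary cluster you must first know that its expansion in that cluster is again pointed and satisfies the greedy recurrence there, which \emph{is} the symmetry property. Similarly, your step (ii) for (a) — a ``bijective or sign-reversing argument'' for \eqref{eq:c-recurrence2} — is not how the paper proceeds and is not obviously available: the paper proves \eqref{eq:cpq-dyck-recurrences} indirectly, by identifying the defect $d(p,q)$ with a coefficient of $x[ba_2-a_1,a_2]$ (using the symmetry again, via Lemma~\ref{lemma:d-as-coefficient}) and then killing it with the support bounds of Proposition~\ref{prop:support}. Note also that \eqref{eq:c-recurrence2} alone does not yield \eqref{eq:c-recurrence}; one needs Proposition~\ref{pr:inequality-c(p,q)}, hence positivity at three consecutive clusters, to upgrade ``equals one of the two sums'' to ``equals the max.''

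For (c), linear independence from distinct pointed monomials is fine, but your spanning argument by ``peeling the Newton polytope'' has a termination gap: subtracting $n\,x[a_1,a_2]$ removes the vertex $(-a_1,-a_2)$ but can introduce new support points anywhere in $(-a_1,-a_2)+\ZZ_{\geq 0}^2$, so without an a priori finiteness bound the process need not stop. The paper avoids this by expanding greedy elements in the known basis of standard monomials $z[a_1,a_2]=x_0^{[a_2]_+}x_1^{[-a_1]_+}x_2^{[-a_2]_+}x_3^{[a_1]_+}$ and proving unitriangularity with respect to the partial order $(b_1,b_2)\prec(a_1,a_2) \Leftrightarrow [b_1]_+ + [b_2]_+ < [a_1]_+ + [a_2]_+$ (Lemma~\ref{lem:unitriangular}); the transition matrix over a suitable finite set is then block-unitriangular and invertible over $\ZZ$. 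If you want to keep a peeling-style argument you would need to supply the missing support control, which is essentially what the standard-monomial comparison provides for free.
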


Several of the statements in Theorem~\ref{main theorem} follow from the symmetry considerations. 
Note that the obvious symmetry of  the exchange relations \eqref{eq:clusterrelations} implies that for every $p \in
\ZZ$, there is an involutive automorphism $\sigma_p$ of $\myAA$
acting on cluster variables by a permutation $\sigma_p(x_m) = x_{2p-m}$.
It is easy to see that the group of automorphisms of $\myAA$ generated
by all $\sigma_p$ is a dihedral group generated by $\sigma_1$ and
$\sigma_2$ (this group is finite if $\myAA$ is of finite type, and infinite otherwise).

\begin{proposition}
\label{co:sigma-symmetry} 
The greedy basis is invariant under the
action of all $\sigma_p$. 
Specifically, the automorphisms $\sigma_1$ and
$\sigma_2$ act on greedy elements as follows:
\begin{equation}
\label{eq:sigma-Q} 
\sigma_1(x[a_1, a_2]) = x[a_1, c [a_1]_+ - a_2], \quad 
\sigma_2 (x[a_1, a_2]) = x[b [a_2]_+ - a_1, a_2] 
\end{equation}
for all $(a_1, a_2) \in \ZZ^2$, where we use the standard notation $[a]_+ =  \max (a, 0)$.
\end{proposition}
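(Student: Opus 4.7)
The plan is to deduce Proposition \ref{co:sigma-symmetry} from Theorem \ref{main theorem}(d) in two stages: first showing that each $\sigma_p$ permutes the greedy basis, then identifying the permutation explicitly.

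For the first stage, each $\sigma_p$ is an involutive algebra automorphism of $\myAA$ that permutes the clusters (it sends $\{x_m,x_{m+1}\}$ to $\{x_{2p-m-1},x_{2p-m}\}$); in particular, it preserves $\myAA_+$ and the set of indecomposable positive elements. For $y = x[a_1,a_2]$, the Laurent expansion of $\sigma_1(y)$ in the cluster $\sigma_1(\{x_1,x_2\}) = \{x_0,x_1\}$ is literally the same as that of $y$ in $\{x_1,x_2\}$ (with $x_0$ playing the role of $x_2$). Hence $\sigma_1(y)$ is pointed and satisfies the greedy recurrence relative to $\{x_0,x_1\}$, and Theorem \ref{main theorem}(d) forces $\sigma_1(y) = x[a_1', a_2']$ for some $(a_1', a_2') \in \ZZ^2$; the same argument handles $\sigma_2$.

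For the second stage, I identify $(a_1', a_2')$ by comparing the minimum exponents of $x_1$ and $x_2$ in $\sigma_1(y)$. The case $a_1 \leq 0$ is handled directly using the closed-form expression in Remark \ref{rem:greedy}(a): substituting $x_2 \mapsto (1 + x_1^b)/x_2$ gives $\sigma_1(x[a_1,a_2]) = x[a_1, -a_2] = x[a_1, c[a_1]_+ - a_2]$. For the main case $a_1 > 0$, I first verify from \eqref{eq:c-recurrence} that $c(0,q) = \binom{a_1}{q}$ and establish the support bound $c(p,q) = 0$ for $q > a_1$. In the formal expansion $\sigma_1(y) = \sum c(p,q)\, x_1^{bp-a_1}(1+x_1^b)^{cq-a_2} x_2^{a_2-cq}$, these bounds imply that every monomial $x_1^\alpha x_2^\beta$ of $\sigma_1(y)$ satisfies $\alpha \geq -a_1$ and $\beta \geq a_2 - ca_1$, while the extremal monomial $x_1^{-a_1}x_2^{a_2-ca_1}$ appears with coefficient exactly $c(0,a_1) = 1$ (obtained, when $ca_1 < a_2$, by clearing the denominator $(1+x_1^b)^{a_2-ca_1}$ and tracking coefficients in the resulting polynomial identity). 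Hence $\sigma_1(y)$ is pointed at $(a_1, ca_1 - a_2) = (a_1, c[a_1]_+ - a_2)$, as claimed. The formula for $\sigma_2$ follows by the symmetric computation using $x_1 \mapsto (1 + x_2^c)/x_1$ and the row identity $c(p,0) = \binom{a_2}{p}$.

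The principal technical obstacle is the support bound $c(p,q) = 0$ for $q > a_1$ in the case $a_1,a_2 > 0$. The column identity $c(0,q) = \binom{a_1}{q}$ is immediate from \eqref{eq:c-recurrence} (via the classical identity $\binom{a_1}{q} = \sum_{k \geq 1}(-1)^{k-1}\binom{a_1}{q-k}\binom{a_1+k-1}{k}$), but extending it to higher rows is not transparent since the right-hand side of the recurrence is a maximum of two expressions. If a direct induction on \eqref{eq:c-recurrence} does not suffice, I would appeal to the Dyck-path combinatorial formula for greedy coefficients developed later in the paper, which makes the containment of the support in $[0,a_2] \times [0,a_1]$ manifest.
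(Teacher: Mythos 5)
Your proposal is circular relative to the paper's logic, and the circularity cannot be repaired without doing the combinatorial work that constitutes the actual proof. The paper deduces Theorem~\ref{main theorem}(d) \emph{from} Proposition~\ref{co:sigma-symmetry} (``if we replace the initial cluster \dots the resulting set of greedy elements will be obtained from the original one by some $\sigma_p$''), so your first stage, which invokes Theorem~\ref{main theorem}(d) to conclude that $\sigma_1(x[a_1,a_2])$ is \emph{some} greedy element with respect to $\{x_1,x_2\}$, assumes what is to be proved. The same defect affects your fallback for the support bound $c(p,q)=0$ for $q>a_1$: the Dyck-path formula of Theorem~\ref{th:greedy-combinatorial} is established only \emph{after}, and by means of, the symmetry property \eqref{eq:cpq-dyck-symmetry}, which is precisely this proposition for the Dyck-path elements. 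Even the existence of $x[a_1,a_2]$ for $a_1,a_2>0$ (finiteness of the support, membership in $\myAA$), which you take for granted throughout, is obtained in the paper as a consequence of the symmetry. There is no soft argument available here: the proof takes \eqref{eq:greedy-Dyck-expression} as the \emph{definition} of $x[a_1,a_2]$ and verifies $\sigma_2(x[a_1,a_2])=x[b[a_2]_+-a_1,a_2]$ directly as a combinatorial identity relating compatible pairs in $\mathcal{D}^{[a_1]_+\times[a_2]_+}$ and $\mathcal{D}^{[a_1']_+\times[a_2]_+}$; in the essential case $0<a_1<ba_2$ this requires the shadow and remote-shadow decomposition of Lemma~\ref{lemma:N} and the order-preserving bijection $\theta$ of Lemma~\ref{lemma:T}, none of which your proposal addresses.

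Two ingredients of your second stage are sound: the column identity $c(0,q)=\binom{a_1}{q}$ (your generating-function derivation is correct), and the computation showing that \emph{if} $\sigma_1(x[a_1,a_2])$ is pointed, then it is pointed at $(a_1,c[a_1]_+-a_2)$; the case $a_1\le 0$ is also correctly disposed of via \eqref{eq:2-3-quarter}. But identifying the pointing vector only tells you which greedy element $\sigma_1(x[a_1,a_2])$ would have to be; it does not show that the image is greedy, which is the whole difficulty. A further caution on your first stage: $\sigma_1$ reverses the order of the cluster it preserves, so the ``greedy recurrence relative to $\{x_0,x_1\}$'' satisfied by $\sigma_1(y)$ is the one with the roles of $b$ and $c$ (and of the two coordinates) interchanged; this bookkeeping must be made explicit if that route is ever to be carried out rigorously.
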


To illustrate the use of Proposition~\ref{co:sigma-symmetry}, note that it implies 
Theorem~\ref{main theorem}(e). 
Indeed, it is clear that each cluster monomial can be obtained from a 
cluster monomial in $x_1$ and $x_2$ by the action of some $\sigma_p$.
Since every cluster monomial in $x_1$ and $x_2$ is a greedy element (see 
Remark~\ref{rem:greedy} (1)), it follows that all cluster monomials are greedy elements as well. 
In particular, \eqref{eq:sigma-Q} implies that 
\begin{equation}
\label{eq:2-3-quarter}
x[a_1,a_2]= \left\{
\begin{array}[h]{ll}
x_1^{-a_1} x_0^{a_2}=x_1^{-a_1}x_2^{-a_2}(x_1^b+1)^{a_2}, & \quad
\mbox{if $a_1 \leq 0, \, a_2 \geq 0;$}
\\
x_3^{a_1} x_2^{- a_2}=x_1^{-a_1}x_2^{-a_2}(x_2^c+1)^{a_1}, & \quad
\mbox{if $a_1 \geq 0, \, a_2 \leq 0.$}
\end{array}
\right.
\end{equation}

Another immediate consequence of Proposition~\ref{co:sigma-symmetry} is Theorem~\ref{main theorem}(d). 
Indeed, if we replace the initial cluster $\{x_1, x_2\}$ in the definition of greedy elements by any other cluster, the resulting set of greedy elements 
will be obtained from the original one by some $\sigma_p$.

\begin{remark}
It is clear from the definition that $(a_1,a_2)$ is the \emph{denominator vector} of a greedy element $x[a_1,a_2]$ in the sense of \cite[Section~7]{ca4}.
As a consequence of \cite[Theorem~6.1]{fz-ClusterI}, this observation combined with Theorem~\ref{main theorem}(e) allows us to identify the vectors $(a_1,a_2)$ associated with cluster variables with certain roots. 
To this end, we identify $\ZZ^2$ with the root lattice corresponding to the generalized Cartan matrix 
\begin{equation}
\label{eq:cartan-rank2} A=A(b,c)=\mat{2}{-b}{-c}{2} 
\end{equation}
in such a way that standard basis vectors in $\ZZ^2$ are identified with simple roots.
Then  $x[a_1,a_2]$ is a non-initial cluster variable if and only if $(a_1,a_2)$ is a real positive root under this identification. 
Furthermore, the description of real and imaginary roots given in \cite{kac} implies at once 
that $x[a_1,a_2]$ is a cluster monomial if and only if $(a_1, a_2)$ is \emph{not} an imaginary positive root.

The correspondence between non-initial cluster variables and real positive roots is easily established 
in the finite type case $bc \leq 3$.
To make it explicit in the infinite type case,  
let $S_{-1}(t), S_0 (t), S_1(t), \dots$ be the sequence of
(normalized) \emph{Chebyshev polynomials of second kind} given by
the initial conditions
\begin{equation}
\label{eq:Cheb-2-initial}
S_{-1}(t) = 0, \quad S_0(t) = 1 \, ,
\end{equation}
and the recurrence relation
\begin{equation}
\label{eq:Cheb-2-recurrence}
S_p(t) = t S_{p-1}(t) -  S_{p-2}(t) \quad (p \geq 1) \, .
\end{equation}
Assume that $bc \geq 4$, and let $t = bc - 2$. 
A direct check shows that, for every $m \in \ZZ - \{1,2\}$, we have $x_m = x[a_1,a_2]$, where $(a_1,a_2) \in \ZZ^2$ is given by
\begin{equation}
\label{eq:cluster-variable-denoms}
(a_1,a_2)= \left\{
\begin{array}[h]{llll}
(S_p(t) + S_{p-1}(t), c S_{p-1}(t)), & \quad
\mbox{if $m=2p+3;$}
\\
(b S_{p}(t), S_p(t) + S_{p-1}(t)), & \quad
\mbox{if $m=2p+4;$}
\\
(b S_{p-1}(t), S_p(t) + S_{p-1}(t)), & \quad
\mbox{if $m=-2p;$}
\\
(S_p(t) + S_{p-1}(t), c S_{p}(t)), & \quad
\mbox{if $m=-2p-1;$}
\end{array}
\right.
\end{equation}
here in all the cases we have $p \geq 0$. 
\end{remark}

\smallskip

As already mentioned above, a crucial ingredient in the proofs of the above results is 
an explicit combinatorial expression for the greedy elements, in the spirit of combinatorial expressions for cluster variables given in 
\cite{ls-comm, ls-noncomm, r-noncomm}. 
This expression is given in terms of Dyck paths. 
Here is the necessary terminology. 

Let $(a_1, a_2)$ be a pair of nonnegative integers. 
A \emph{Dyck path} of type $a_1\times a_2$ is a lattice path
from $(0, 0)$  to $(a_1,a_2)$ that 
never goes above the main diagonal joining $(0,0)$ and $(a_1,a_2)$.
Among the Dyck paths of a given type $a_1\times a_2$, there is a (unique) \emph{maximal} one denoted by 
$\mathcal{D} = \mathcal{D}^{a_1\times a_2}$. 
It is defined by the property that any lattice point strictly above $\mathcal{D}$ is also strictly above the main diagonal.

Let $\mathcal{D}=\mathcal{D}^{a_1\times a_2}$.  Let $\mathcal{D}_1=\{u_1,\dots,u_{a_1}\}$ be the set of horizontal edges of $\mathcal{D}$ indexed from left to right, and $\mathcal{D}_2=\{v_1,\dots, v_{a_2}\}$ the set of vertical edges of $\mathcal{D}$ indexed from bottom to top.  
Given any points $A$ and $B$ on $\mathcal{D}$, let $AB$ be the subpath starting from $A$, and going in the Northeast direction until it reaches $B$ (if we reach $(a_1,a_2)$ first, we continue from $(0,0)$). By convention, if $A=B$, then $AA$ is the subpath that starts from $A$, then passes $(a_1,a_2)$ and ends at $A$. If we represent a subpath of $\mathcal{D}$ by its set of edges, then for $A=(i,j)$ and $B=(i',j')$, we have
$$
AB= 
\begin{cases}
\{u_k, v_\ell: i < k \leq i', j < \ell \leq j'\}, \quad\textrm{if $B$ is to the Northeast of $A$};\\
\mathcal{D} - \{u_k, v_\ell: i' < k \leq i, j' < \ell \leq j\}, \quad\textrm{otherwise}.
\end{cases}
$$
We denote by $(AB)_1$ the set of horizontal edges in $AB$, and by $(AB)_2$ the set of vertical edges in $AB$. 
Also let $AB^\circ$ denote the set of lattice points on the subpath $AB$ excluding the endpoints $A$ and $B$ (here $(0,0)$ and $(a_1,a_2)$ are regarded as the same point).

Here is an example for $(a_1,a_2)=(6,4)$. 

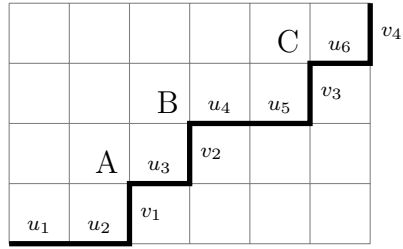
\begin{figure}[h]
\begin{tikzpicture}[scale=.8]
\draw[step=1,color=gray] (0,0) grid (6,4);
\draw[line width=2,color=black] (0,0)--(2,0)--(2,1)--(3,1)--(3,2)--(5,2)--(5,3)--(6,3)--(6,4);
%
\draw (0.5,0) node[anchor=south]  {\tiny$u_1$};
\draw (1.5,0) node[anchor=south]  {\tiny$u_2$};
\draw (2.5,1) node[anchor=south]  {\tiny$u_3$};
\draw (3.5,2) node[anchor=south]  {\tiny$u_4$};
\draw (4.5,2) node[anchor=south]  {\tiny$u_5$};
\draw (5.5,3) node[anchor=south]  {\tiny$u_6$};
\draw (6,3.5) node[anchor=west]  {\tiny$v_4$};
\draw (5,2.5) node[anchor=west]  {\tiny$v_3$};
\draw (3,1.5) node[anchor=west]  {\tiny$v_2$};
\draw (2,.5) node[anchor=west]  {\tiny$v_1$};
\draw (2,1) node[anchor=south east] {A};
\draw (3,2) node[anchor=south east] {B};
\draw (5,3) node[anchor=south east] {C};
\end{tikzpicture}
\caption{A maximal Dyck path.}
\label{fig:Dyck-path}
\end{figure}

Let $A=(2,1)$, $B=(3,2)$ and $C=(5,3)$. Then 
$$
(AB)_1=\{u_3\}, \,\, (AB)_2=\{v_2\}, \,\, 
(BA)_1=\{u_4,u_5,u_6,u_1,u_2\}, \, \, (BA)_2=\{v_3,v_4,v_1\} \ . 
$$
The point $C$ is in $BA^\circ$ but not in $AB^\circ$. The subpath $AA$ has length 10 (not 0).

\begin{definition}
\label{df:compatible}
For $S_1\subseteq \mathcal{D}_1$, $S_2\subseteq \mathcal{D}_2$, we say that the pair $(S_1,S_2)$ is compatible if for every $u\in S_1$ and $v\in S_2$, denoting by $E$ the left endpoint of $u$ and $F$ the upper endpoint of $v$, there exists a lattice point $A\in EF^\circ$ such that 
\begin{equation}
\label{0407df:comp}
|(AF)_1|=b|(AF)_2\cap S_2|\textrm{\; or\; }|(EA)_2|=c|(EA)_1\cap S_1|.\end{equation}
\end{definition}

With all this terminology in place we are ready to present our combinatorial expression for greedy elements. 

\begin{theorem}
\label{th:greedy-combinatorial}
For every $(a_1,a_2) \in \ZZ^2$, the greedy element $x[a_1,a_2] \in \myAA(b,c)$ at $(a_1,a_2)$ is given by
\begin{equation}
\label{eq:greedy-Dyck-expression}
x[a_1,a_2] = x_1^{-a_1}x_2^{-a_2}\sum_{(S_1,S_2)}x_1^{b|S_2|}x_2^{c|S_1|},
\end{equation}
where the sum is over all compatible pairs $(S_1,S_2)$ in $\mathcal{D}^{[a_1]_+\times [a_2]_+}$.
\end{theorem}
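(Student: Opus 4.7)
The plan is to verify that the Laurent polynomial $\tilde{x}[a_1,a_2]$ defined by the right-hand side of \eqref{eq:greedy-Dyck-expression} is pointed at $(a_1,a_2)$ in the sense of Definition~\ref{df:pointed} and that its coefficients satisfy the greedy recurrence \eqref{eq:c-recurrence}. By the uniqueness observation of Remark~\ref{rem:greedy}(a), this identifies $\tilde{x}[a_1,a_2]$ with $x[a_1,a_2]$ and, as a byproduct, establishes the existence half of Theorem~\ref{main theorem}(a). The degenerate cases $a_1\le 0$ or $a_2\le 0$ are immediate: the Dyck rectangle collapses to a single row or column, compatibility becomes vacuous, and the sum in \eqref{eq:greedy-Dyck-expression} evaluates to $(x_1^b+1)^{[a_2]_+}$ or $(x_2^c+1)^{[a_1]_+}$, matching \eqref{eq:2-3-quarter}. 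So assume $a_1,a_2>0$, and write $\tilde{c}(p,q)$ for the number of compatible pairs $(S_1,S_2)$ in $\mathcal{D}^{a_1\times a_2}$ with $|S_1|=q$, $|S_2|=p$; pointedness is clear because the unique compatible pair with $|S_1|=|S_2|=0$ is $(\emptyset,\emptyset)$, giving $\tilde c(0,0)=1$.

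By Proposition~\ref{pr:B-introduction-linear} it suffices to verify the linearized recursion \eqref{eq:c-recurrence2}, and the reflection of $\mathcal{D}$ across its main diagonal (swapping $\mathcal{D}_1\leftrightarrow\mathcal{D}_2$, $a_1\leftrightarrow a_2$, $b\leftrightarrow c$) preserves compatibility, so we only need to prove the first branch,
\[
\tilde c(p,q) \;=\; \sum_{k=1}^p (-1)^{k-1}\tilde c(p-k,q)\binom{a_2-cq+k-1}{k} \qquad \text{when } ca_1q\le ba_2p.
\]
The combinatorial idea I would pursue is to reformulate the existential clause in Definition~\ref{df:compatible} as a family of \emph{forbidden local sub-configurations}: for each $u\in S_1$ the failure of compatibility with $v\in S_2$ should be pinned down to a specific block of $b$ consecutive vertical edges of $\mathcal{D}_2$ associated with $u$ along the diagonal. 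Once this is done, compatible $S_2$ of size $p$ (for fixed $S_1$) are characterized as subsets of $\mathcal{D}_2$ avoiding a prescribed family of forbidden $b$-blocks; applying inclusion--exclusion on how many forbidden blocks $S_2$ meets produces an alternating sum whose $k$-th term involves $\tilde c(p-k,q)$ times a count of size-$k$ multisets drawn from the $a_2-cq$ vertical edges of $\mathcal{D}$ unconstrained by $S_1$. The identity $(-1)^k\binom{-(a_2-cq)}{k}=\binom{a_2-cq+k-1}{k}$ then yields the displayed recursion, while the hypothesis $ca_1q\le ba_2p$ is what guarantees that the relevant witness for compatibility can consistently be taken of the first type in \eqref{0407df:comp}.

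The main obstacle will be precisely this localization of compatibility: the clause ``there exists $A\in EF^\circ$ satisfying \eqref{0407df:comp}'' is manifestly global, and I would need to exhibit a canonical choice of witness $A=A(u,v)$ for each pair and prove that compatibility can be decided block-by-block along $\mathcal{D}$. After that, the inclusion--exclusion bookkeeping is delicate but essentially mechanical. Finally, membership $\tilde x[a_1,a_2]\in\myAA$ follows from the upper-cluster description \eqref{eq:upper} once it is shown that $\tilde x[a_1,a_2]$ is Laurent in each of $\{x_0,x_1\}$ and $\{x_2,x_3\}$; this can be done by re-running the compatibility analysis in the reflected Dyck rectangle, which on the algebraic side corresponds to the transformations in \eqref{eq:sigma-Q} of Proposition~\ref{co:sigma-symmetry}.
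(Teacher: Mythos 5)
Your overall architecture (define $\tilde x[a_1,a_2]$ by the right-hand side of \eqref{eq:greedy-Dyck-expression}, check pointedness, verify the recurrence, invoke uniqueness) is sound, and the degenerate cases and the transpose reduction to one branch of \eqref{eq:c-recurrence2} are handled correctly. But the two places where you defer the work are exactly where the paper's entire technical content lives, and in one of them your proposed mechanism would not go through. Your plan for the recursion rests on characterizing compatibility, for fixed $S_1$, as ``$S_2$ avoids a prescribed family of forbidden $b$-blocks.'' A characterization of that clean forbidden-block type holds only in the extreme regime $ba_2\le a_1$ (Lemma~\ref{lem:shadow-case2}); in the main regime $0<a_1<ba_2$ compatibility is governed by a shadow $\SH(S_2)$ \emph{together with} a remote shadow $\sh(S_2)$, on which membership of $S_1$ is permitted subject to a further non-local condition (Lemma~\ref{lemma:N}). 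You flag this ``localization'' as the main obstacle but offer no way around it --- and indeed the paper does not prove \eqref{eq:cpq-dyck-recurrences} by direct inclusion--exclusion at all. It proves it indirectly: the difference $d(p,q)$ between $c(p,q)$ and the alternating sum is identified (Lemma~\ref{lemma:d-as-coefficient}) with a coefficient of $x[a_1,a_2]$ in the cluster $\{x_2,x_3\}$, hence via $\sigma_2$ with a coefficient $c'(p',q)$ of $x[ba_2-a_1,a_2]$, which is then shown to vanish using the support bounds of Proposition~\ref{prop:support}. Your proposal contains no analogue of either ingredient.

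Two further gaps. (i) Membership of $\tilde x[a_1,a_2]$ in $\myAA$ needs the symmetry \eqref{eq:sigma-Q} for the combinatorially defined elements; this is not obtained by ``re-running the analysis in the reflected Dyck rectangle,'' because it relates compatible pairs in $\mathcal{D}^{a_1\times a_2}$ to compatible pairs in the \emph{different} rectangle $\mathcal{D}^{(ba_2-a_1)\times a_2}$. Establishing the required bijection (the map $\theta$ on remote shadows, Lemmas~\ref{lemma:T} and~\ref{prop:theta}) is the longest argument in the paper. (ii) Your opening reduction ``by Proposition~\ref{pr:B-introduction-linear} it suffices to verify \eqref{eq:c-recurrence2}'' is circular as the paper is organized: Proposition~\ref{pr:B-introduction-linear} is itself deduced from \eqref{eq:cpq-dyck-recurrences} together with Proposition~\ref{pr:inequality-c(p,q)}, and the latter applies only once one knows the element is positive at three consecutive clusters --- which again requires the symmetry \eqref{eq:sigma-Q}. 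You would either have to supply an independent proof that any solution of \eqref{eq:c-recurrence2} satisfies \eqref{eq:c-recurrence}, or follow the paper's order: symmetry first, then positivity and membership in $\myAA$, then Proposition~\ref{pr:inequality-c(p,q)}, and only then conclude that the maximum in \eqref{eq:c-recurrence} is attained by the branch computed in \eqref{eq:c-recurrence2}.
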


\begin{example}
Let $b=3$, $c=2$, and $(a_1,a_2) = (3,3)$. 
Then the Dyck path $\mathcal{D}=\mathcal{D}^{3\times3}$ consists of alternating horizontal and vertical edges: 
$\mathcal{D}= \{u_1, v_1, u_2, v_2, u_3, v_3\}$. 
Here are all compatible pairs $(S_1,S_2)$:
\begin{itemize}
\item[(i)] At least one of the sets $S_1$ and $S_2$ is empty; then another one can be arbitrary.  
\item[(ii)] Both $S_1$ and $S_2$ are non-empty. One can show that there are three such compatible pairs: 
$(\{u_2\},\{v_1\})$, $(\{u_3\},\{v_2\})$, and $(\{u_1\},\{v_3\})$. 
\end{itemize}
For instance, let us show that $(\{u_3\},\{v_2\})$ is compatible. 
We need to check \eqref{0407df:comp} for $E = F = (2,2)$. 
By inspection, the first equality in \eqref{0407df:comp} is impossible to satisfy, but the second one is satisfied for 
$A = (1,1)$ or $A = (2,1)$. 

Adding up the contributions from all these compatible pairs to the right hand side of \eqref{eq:greedy-Dyck-expression}, we see that this formula yields
$${x}[3,3]=x_1^{-3}x_2^{-3}\big{(} (1+x_2^2)^3+ ((1+x_1^3)^3 - 1) + 3x_1^3 x_2^2) \big{)}.$$
\end{example}

\begin{remark}
As a special case of \eqref{eq:greedy-Dyck-expression}, we get a new combinatorial expression for cluster variables, which is different from (and we believe simpler than) the expression given in \cite{ls-comm, ls-noncomm, r-noncomm}.
A combinatorial argument showing the equivalence of these expressions will be given elsewhere. 
\end{remark}

Our proofs of the above results proceed in the following logical sequence. 
Proposition~\ref{pr:inequality-c(p,q)} is proved in Section \ref{inequality-c(p,q)}. 
Then we change our point of view and think of \eqref{eq:greedy-Dyck-expression}
as the \emph{definition} of $x[a_1,a_2]$. 
Clearly, if $x[a_1,a_2]$ is defined this way then it is a Laurent polynomial in $x_1$ and $x_2$ which is pointed at $(a_1,a_2)$. 
The coefficients in its expansion \eqref{eq:pointed-expansion} are given as follows:
\begin{align}
\label{eq:cpq-dyck}
&\text{$c(p,q)$ is the number of compatible pairs $(S_1,S_2)$}\\ 
\nonumber
&\text{in $\mathcal{D}^{[a_1]_+\times [a_2]_+}$ 
such that $|S_1|=q$ and $|S_2|=p$.}
\end{align}

We deduce our main results from the following two technical statements:
\begin{align}
\label{eq:cpq-dyck-symmetry}
&\text{the elements $x[a_1,a_2]$ given by \eqref{eq:greedy-Dyck-expression}}\\ 
\nonumber
&\text{satisfy the symmetry property
\eqref{eq:sigma-Q};}
\end{align}
\begin{align}
\label{eq:cpq-dyck-recurrences}
&\text{if $a_1$ and $a_2$ are positive, then the coefficients $c(p,q)$ }\\ 
\nonumber
&\text{given by \eqref{eq:cpq-dyck} satisfy the recurrence \eqref{eq:c-recurrence2}.}
\end{align}
Property \eqref{eq:cpq-dyck-symmetry} is proved in Section~\ref{1st technical statement}, and \eqref{eq:cpq-dyck-recurrences}
in Section~\ref{2nd technical statement}. 

Once these two properties are established, almost all of the above results (with the exception of Theorem~\ref{main theorem}(c)) can be deduced by the
following sequence of steps.

\smallskip

{\bf Step~1.} In view of \eqref{eq:upper}, the property \eqref{eq:cpq-dyck-symmetry} implies that all $x[a_1,a_2]$ given by \eqref{eq:greedy-Dyck-expression}
do belong to $\myAA$. Furthermore, \eqref{eq:cpq-dyck} makes it obvious that all these elements are positive. 

\smallskip

{\bf Step~2.} We then show that the coefficients
$c(p,q)$ given by \eqref{eq:cpq-dyck} satisfy \eqref{eq:c-recurrence}. 
If at least one of the components $a_1$ and $a_2$ is nonpositive, this follows by a direct check, otherwise we just combine
\eqref{eq:cpq-dyck-recurrences} with Proposition~\ref{pr:inequality-c(p,q)}. 
Thus the elements given by \eqref{eq:greedy-Dyck-expression} are indeed greedy elements in the sense of 
Definition~\ref{df:greedy}. 
This proves  Theorem~\ref{main theorem}(a) and Theorem~\ref{th:greedy-combinatorial}.
We see then that \eqref{eq:cpq-dyck-symmetry} implies Proposition~\ref{co:sigma-symmetry}, while  \eqref{eq:cpq-dyck-recurrences} implies Proposition~\ref{pr:B-introduction-linear}. 

\smallskip

{\bf Step~3.} Once we know that the greedy elements are positive, their indecomposability is a trivial consequence of  
Proposition~\ref{pr:inequality-c(p,q)}.
This completes the proof of Theorem~\ref{main theorem}(b). 

\smallskip

We have already noticed that Proposition~\ref{co:sigma-symmetry} implies Theorem~\ref{main theorem}(d),(e). 
This only leaves Theorem~\ref{main theorem}(c), which will be proved in Section \ref{greedy basis}.
Note that our proof of Theorem~\ref{main theorem}(c) is inspired by a recent paper \cite{bz-triangular}. 

\smallskip

Our proof of \eqref{eq:cpq-dyck-recurrences} uses upper bounds for the \emph{supports} of greedy elements which we obtain in Section~\ref{Upper bounds for supports}
(as usual, the support  of a Laurent polynomial $x \in \ZZ[x_1^{\pm 1}, x_2^{\pm 1}]$ is the set of lattice points $(d_1,d_2)$ such that $x_1^{d_1} x_2^{d_2}$ appears with non-zero coefficient in the Laurent expansion of~$x$). 
The main result in this section is Proposition~\ref{prop:support}. 
The study of these upper bounds brought us to the heuristic conclusion that for general $b$ and $c$ the greedy elements do \emph{not} exhaust 
all indecomposable positive elements in $\myAA(b,c)$.
For instance, our experiments suggest that for $(b,c) = (3,3)$, the element $x[4,7] + x[7,4] - x[1,1]$ is positive, which easily implies  
the existence of a non-greedy  indecomposable positive element in $\myAA(3,3)$; but at the moment we are unable to confirm this decisively.

\section{Proof of Proposition~\ref{pr:inequality-c(p,q)}}
\label{inequality-c(p,q)}

By symmetry it is enough to prove the inequality
\begin{equation}
\label{eq:c-recurrence-2nd-inequality} 
c(p,q)\ge\sum_{k=1}^q (-1)^{k-1}
c(p,q-k) \binom{a_1\!-\!bp\!+\!k\!-\!1}{k}.
\end{equation}
If $bp\ge a_1$, then \eqref{eq:c-recurrence-2nd-inequality} trivially holds since its right hand side is $0$, and $c(p,q)$ is nonnegative by the assumption.
Thus we assume that $bp < a_1$. 
Under this assumption, let $d(p,q)$ denote the difference  between the left hand side and the right hand side of \eqref{eq:c-recurrence-2nd-inequality}, that is, 
$$d(p,q)=
\sum_{k=0}^{q} (-1)^{k}
c(p,q-k) \binom{a_1\!-\!bp\!+\!k\!-\!1}{k} \ .
$$
Thus, Proposition~\ref{pr:inequality-c(p,q)} is immediate from the following lemma. 

\begin{lemma}
\label{lemma:d-as-coefficient} 
Under the assumptions of Proposition~\ref{pr:inequality-c(p,q)}, if $bp < a_1$ then $d(p,q)$ is the coefficient of the monomial $x_2^{-a_2 + cq}x_3^{a_1-bp}$ in the Laurent expansion of $x$ with respect to $\{x_2,x_3\}$. Therefore, $d(p,q) \geq 0$. 
\end{lemma}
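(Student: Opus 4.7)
The plan is to identify $d(p,q)$ explicitly as the coefficient of $x_2^{-a_2+cq}x_3^{a_1-bp}$ in the $\{x_2,x_3\}$-Laurent expansion of $x$; the non-negativity statement then follows immediately from the positivity of $x$ at $\{x_2,x_3\}$. Writing the $\{x_2,x_3\}$-expansion as $x=\sum_{s,t}\alpha_{s,t}\,x_2^s x_3^t$ with $\alpha_{s,t}\in\ZZ_{\geq 0}$, it suffices to isolate the ``slice'' polynomial $C(x_2):=\sum_s\alpha_{s,\,a_1-bp}\,x_2^s$ and read off its coefficient of $x_2^{-a_2+cq}$.

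To set up an equation for $C(x_2)$, I would multiply the given $\{x_1,x_2\}$-expansion of $x$ by $(x_2^c+1)^{a_1}=x_1^{a_1}x_3^{a_1}$ and apply the telescoping identity
$$x_1^{bp'}x_3^{a_1}=(x_1x_3)^{bp'}\,x_3^{a_1-bp'}=(x_2^c+1)^{bp'}\,x_3^{a_1-bp'},$$
which holds uniformly in $p'\ge 0$ irrespective of the sign of $a_1-bp'$. This yields the Laurent-polynomial identity
$$x\cdot(x_2^c+1)^{a_1}\;=\;x_2^{-a_2}\sum_{p',q'}c(p',q')\,(x_2^c+1)^{bp'}\,x_2^{cq'}\,x_3^{a_1-bp'}.$$
Matching coefficients of $x_3^{a_1-bp}$ on both sides (on the left the $x_3$-dependence sits only in $x$, giving $C(x_2)(x_2^c+1)^{a_1}$; on the right only $p'=p$ contributes, since $b$ is a positive integer) produces the clean identity
$$C(x_2)\,(x_2^c+1)^{a_1}\;=\;x_2^{-a_2}(x_2^c+1)^{bp}\sum_{q'\ge 0}c(p,q')\,x_2^{cq'}.$$

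The rest is coefficient extraction: solving for $C(x_2)=x_2^{-a_2}(x_2^c+1)^{-(a_1-bp)}\sum_{q'}c(p,q')x_2^{cq'}$ and using the hypothesis $bp<a_1$ to expand, via the generalized binomial theorem,
$$(x_2^c+1)^{-(a_1-bp)}\;=\;\sum_{k\ge 0}(-1)^k\binom{a_1-bp+k-1}{k}x_2^{ck},$$
the convolution condition $q'+k=q$ returns precisely $\sum_{k=0}^{q}(-1)^k c(p,q-k)\binom{a_1-bp+k-1}{k}=d(p,q)$. Since $C(x_2)$ is a priori a Laurent polynomial (as a slice of the Laurent polynomial $x$), this formal-series computation recovers the honest coefficient $\alpha_{-a_2+cq,\,a_1-bp}$, which is non-negative by the assumed positivity at $\{x_2,x_3\}$. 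I do not anticipate any serious obstacle beyond this one subtlety about extracting coefficients from a formal expansion of a rational function; the rest is bookkeeping built entirely from the exchange relation $x_1x_3=x_2^c+1$.
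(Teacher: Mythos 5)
Your proof is correct and follows essentially the same route as the paper's: both convert the $\{x_1,x_2\}$-expansion into a $\{x_2,x_3\}$-expansion via $x_1x_3=x_2^c+1$, isolate the coefficient of $x_3^{a_1-bp}$ (only $p'=p$ contributes since $b>0$), and expand $(x_2^c+1)^{-(a_1-bp)}$ by the negative binomial series to read off $d(p,q)$. Your version of clearing denominators first and noting that the slice $C(x_2)$ is a priori a Laurent polynomial is a slightly more careful treatment of the formal-series step that the paper performs by direct substitution.
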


\begin{proof}
Take the expansion \eqref{eq:pointed-expansion} of~$x$ and substitute $x_3^{-1} (x_2^c + 1)$ for $x_1$, to get an expansion of $x$ in terms of $x_2$ and $x_3$. 
For a given $p \geq 0$ such that $bp < a_1$, the monomial $x_3^{a_1-bp}$ appears with the coefficient
$$(x_2^c + 1)^{-a_1 + bp} \sum_{q \geq 0} c(p,q)  x_2^{-a_2 + cq} \ .$$
Expanding $(x_2^c + 1)^{-a_1 + bp}$ by the binomial formula with a negative exponent, we get
$$(x_2^c + 1)^{-a_1 + bp} = \sum_{k \geq 0} (-1)^k \binom{a_1\!-\!bp\!+\!k\!-\!1}{k} x_2^{ck} \ .$$
Multiplying this expression with $\sum_{q \geq 0} c(p,q)  x_2^{-a_2 + cq}$, we get 
$\sum_{q \geq 0} d(p,q)  x_2^{-a_2 + cq}$,
finishing the proofs of Lemma~\ref{lemma:d-as-coefficient} and Proposition~\ref{pr:inequality-c(p,q)}. 
\end{proof}

\section{Proof of \eqref{eq:cpq-dyck-symmetry}}
\label{1st technical statement}
Let $x[a_1,a_2]$ be given by \eqref{eq:greedy-Dyck-expression}. 
Due to obvious symmetry, it is enough to prove the second equality in \eqref{eq:sigma-Q}: 
$\sigma_2 (x[a_1, a_2]) = x[a'_1, a_2]$, where $a'_1 = b [a_2]_+ - a_1$. 
Since $\sigma_2$ is an involution, we may also assume that $a_1\leq a'_1$. 
It is easy to see that this assumption shows that it is enough to consider the following three cases: $\max (a_1,a_2) \le 0$, $a_1 \leq 0 < a_2$, $0<a_1<ba_2$ 
(in fact, the last case can be replaced by a stronger restriction $0<a_1\le ba_2/2$, but this does not seem to make our argument easier).
In each of the cases we abbreviate $\mathcal{D} = \mathcal{D}^{[a_1]_+\times [a_2]_+}$ and 
$\mathcal{D}' = \mathcal{D}^{[a'_1]_+\times [a_2]_+}$.

\medskip

{\bf Case 1:} $\max(a_1,a_2) \le 0$. Then $\mathcal{D} = \mathcal{D}^{0 \times 0}$ is just one point so 
$x[a_1,a_2] = x_1^{-a_1} x_2^{-a_2} = x_1^{a'_1} x_2^{-a_2}$. 
Therefore, we have 
$$\sigma_2 (x[a_1, a_2]) = x_3^{a'_1} x_2^{-a_2} = x_1^{-a'_1} x_2^{-a_2} (x_2^c + 1)^{a'_1} \ .$$
On the other hand, $\mathcal{D}' = \mathcal{D}^{a'_1 \times 0}$ is a horizontal segment of length $a'_1$.
Thus, in a compatible pair $(S_1,S_2)$ in $\mathcal{D}'$, the set $S_2$ is empty, while $S_1$ can be any subset of the set of $a'_1$ horizontal edges. 
Applying \eqref{eq:greedy-Dyck-expression} we get
$$x[a'_1, a_2] = x_1^{-a'_1} x_2^{-a_2} (x_2^c + 1)^{a'_1} = \sigma_2 (x[a_1, a_2]) \ ,$$
as desired. 

 \medskip
 
 Before treating the remaining two cases, we make the following easy observation. 
 
 \begin{lemma}
 \label{lem:vj}
 Suppose $a_1$ and $a_2$ are positive integers, and, for $j = 1, \dots, a_2$, let $v_j \in \mathcal{D} = \mathcal{D}^{a_1\times a_2}$ 
 be the vertical edge with the upper endpoint $F_j$ of height~$j$. Then the horizontal coordinate of $F_j$ is $\left  \lceil j a_1/a_2 \right \rceil$, and so the horizontal distance $|(F_h F_j)_1|$ 
 between $F_h$ and $F_j$ for $0\le h< j\le a_2$ is equal to
 \begin{equation}
 \label{eq:ej}
 |(F_{h} F_j)_1| = 
 \left \lceil j a_1/a_2 \right \rceil - \left \lceil h a_1/a_2 \right  \rceil
 \end{equation}
 (with the convention that $F_0$ is the origin $(0,0)$). 
 \end{lemma}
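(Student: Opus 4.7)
The plan is to read off $F_j$ directly from the defining maximality property of $\mathcal{D}^{a_1\times a_2}$. Recall the maximality condition: the lattice points strictly above $\mathcal{D}$ coincide with the lattice points strictly above the main diagonal from $(0,0)$ to $(a_1,a_2)$. Equivalently, a lattice point $(i,j)$ lies on or below $\mathcal{D}$ precisely when $a_1 j \le a_2 i$. I will use this to locate each $F_j$ and then read off horizontal distances.

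First, I would show that $F_j=(\lceil ja_1/a_2\rceil,\,j)$. Since $v_j$ is the vertical edge whose upper endpoint has height $j$, the point $F_j=(i_j,j)$ is the leftmost lattice point of $\mathcal{D}$ at height $j$: the edges of $\mathcal{D}$ at height $j-1$ just to the left of $F_j$ are horizontal, while the edge ending at $F_j$ is vertical. By maximality, this is the leftmost lattice point at height $j$ that lies on or below the diagonal, i.e., the smallest integer $i$ satisfying $a_1 j\le a_2 i$, which is $i_j=\lceil ja_1/a_2\rceil$. The convention $F_0=(0,0)$ is consistent since $\lceil 0\rceil=0$. (One should also note, as a sanity check, that the staircase $j\mapsto (\lceil ja_1/a_2\rceil,j)$ does define a valid monotone lattice path from $(0,0)$ to $(a_1,a_2)$, since $\lceil ja_1/a_2\rceil$ is weakly increasing in $j$ and equals $a_1$ when $j=a_2$; and the strictly-above set of this staircase equals the strictly-above-diagonal set, so the staircase really is $\mathcal{D}$.)

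Second, for the distance formula, since $0\le h<j\le a_2$ we have $F_j$ weakly northeast of $F_h$ on $\mathcal{D}$ (their $x$-coordinates satisfy $\lceil ha_1/a_2\rceil\le \lceil ja_1/a_2\rceil$ since $\lceil\cdot\rceil$ is monotone). Applying the definition of $(F_hF_j)_1$ in the northeast case recorded in the introduction, $(F_hF_j)_1$ consists of the horizontal edges $u_k$ with $\lceil ha_1/a_2\rceil<k\le\lceil ja_1/a_2\rceil$, so its cardinality is $\lceil ja_1/a_2\rceil-\lceil ha_1/a_2\rceil$, as claimed. (If the two $x$-coordinates coincide the set is empty and the formula gives $0$, which is still correct.)

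The argument is essentially bookkeeping, so there is no real obstacle beyond confirming that the maximality condition pins down the position of each $F_j$ and that the NE-subpath convention applies in the degenerate case $\lceil ha_1/a_2\rceil=\lceil ja_1/a_2\rceil$. Both are immediate, so I expect the proof to occupy only a few lines.
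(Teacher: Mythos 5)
Your proof is correct, and it is exactly the intended argument: the paper states this lemma without proof as an ``easy observation,'' and your verification --- that maximality of $\mathcal{D}$ makes ``on or below $\mathcal{D}$'' equivalent to $a_1 j \le a_2 i$, so that $F_j$ is the leftmost lattice point at height $j$ satisfying this, i.e.\ $F_j=(\lceil ja_1/a_2\rceil, j)$ --- is the standard way to fill it in. The attention to the degenerate case $\lceil ha_1/a_2\rceil=\lceil ja_1/a_2\rceil$ under the subpath convention is a nice touch and causes no difficulty.
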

 
 \medskip

{\bf Case 2:} $a_1 \leq 0 < a_2$. Then we have $a'_1 = ba_2 - a_1 \geq b a_2 > 0$. 
The same argument as in Case~1 above shows that $x[a_1, a_2] = x_1^{-a_1} x_2^{-a_2} (x_1^b + 1)^{a_2}$ implying that
\begin{equation}
\label{eq:sigma-2-explicit}
\sigma_2 (x[a_1, a_2]) = x_1^{- a'_1} x_2^{-a_2} (x_2^c + 1)^{a'_1 - ba_2} ((x_2^c + 1)^b + x_1^b)^{a_2} \ .
\end{equation}
Comparing this with the expression \eqref{eq:greedy-Dyck-expression} applied to $\mathcal{D}' = \mathcal{D}^{a'_1 \times a_2}$, we need to prove that 
$$(x_2^c + 1)^{a'_1 - ba_2} ((x_2^c + 1)^b + x_1^b)^{a_2}=\sum_{(S'_1,S'_2)}x_1^{b|S'_2|}x_2^{c|S'_1|} \ ,$$
where the sum on the right is over all compatible pairs in $\mathcal{D}'$. 
Comparing the coefficients of $x_1^{bp'}$ on both sides for $0\le p'\le a_2$, 
it is enough to show that 
$$\binom{a_2}{p'}(x_2^c + 1)^{a'_1 - p'} = \sum_{(S'_1,S'_2): \, |S'_2|=p'} x_2^{c|S'_1|} \ .$$
Letting $X=x_2^c$ and noticing that there are $\binom{a_2}{p'}$ ways to choose $S'_2$, we see that  \eqref{eq:sigma-2-explicit}
becomes a consequence of  the following identity: 
\begin{equation}
\label{eq:case2-algebraic}
\sum_{S'_1} X^{|S'_1|} = (X+1)^{a'_1 - b |S'_2|} 
\end{equation}
for each subset $S'_2\subseteq \mathcal{D}'_2$, where the sum is over all $S'_1 \subseteq \mathcal{D}'_1$ such that $(S'_1,S'_2)$ is compatible.

To prove \eqref{eq:case2-algebraic}, it suffices to prove the following combinatorial statement.

\begin{lemma}
\label{lem:shadow-case2}
Suppose $0 < ba_2 \leq a'_1$.
Then for every $S'_2 \subseteq \mathcal{D}'_2$ there exists a subset $\SH(S'_2)\subseteq \mathcal{D}'_1$ of cardinality $|\SH(S'_2)|= b|S'_2|$
such that, for a subset $S'_1 \subseteq \mathcal{D}'_1$, the pair $(S'_1,S'_2)$ is compatible if and only if $S'_1 \cap \SH(S'_2) = \emptyset$.     
\end{lemma}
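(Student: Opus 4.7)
The shadow map is built edge by edge. For each $v_j\in\mathcal{D}'_2$, let $F_j$ be its upper endpoint and $x_j:=\lceil j a'_1/a_2\rceil$ the horizontal coordinate of $F_j$ (Lemma~\ref{lem:vj}), and set
\[
\SH(\{v_j\}):=\{u\in\mathcal{D}'_1 : \text{the left endpoint of $u$ is $(x_j-i,j-1)$ for some $1\le i\le b$}\}.
\]
The hypothesis $a'_1\ge ba_2$ combined with Lemma~\ref{lem:vj} gives $x_j-x_{j-1}\ge b$ for every $j$ (with $x_0:=0$), so $\SH(\{v_j\})$ consists of exactly $b$ edges of $\mathcal{D}'$; since these edges lie at height $j-1$, the shadows for distinct $j$ are automatically pairwise disjoint. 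Setting $\SH(S'_2):=\bigcup_{v_j\in S'_2}\SH(\{v_j\})$ then yields $|\SH(S'_2)|=b|S'_2|$.

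\textbf{Sufficiency.} Assuming $S'_1\cap\SH(S'_2)=\emptyset$, I will use the universal witness $A:=(x_j-b,j-1)$ for each pair $(u,v_j)\in S'_1\times S'_2$, with $E$ the left endpoint of $u$. By construction the subpath $AF_j$ is the concatenation of $b$ horizontal edges at height $j-1$ followed by $v_j$, so
\[
|(AF_j)_1|=b,\qquad (AF_j)_2\cap S'_2=\{v_j\},
\]
which immediately verifies the first clause of \eqref{0407df:comp}. The point $A$ lies in $EF_j^{\circ}$ by a short case split on the height $e_2$ of $E$: if $e_2<j-1$ this is immediate; if $e_2=j-1$ then $u\notin\SH(\{v_j\})$ forces $E$ to lie strictly to the left of $A$ on the horizontal stretch at height $j-1$; and if $e_2>j-1$ the subpath $EF_j$ wraps around via $(a'_1,a_2)=(0,0)$ and again crosses $A$ strictly before reaching $F_j$.

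\textbf{Necessity.} Conversely, suppose $u\in S'_1\cap\SH(\{v_j\})$, and write $E=(x_j-k,j-1)$ with $1\le k\le b$. Then the local structure of $\mathcal{D}'$ near $E$ forces $EF_j^{\circ}=\{(x_j-r,j-1):0\le r\le k-1\}$. For any such $A=(x_j-r,j-1)$,
\[
|(AF_j)_1|=r\le k-1<b=b\cdot|(AF_j)_2\cap S'_2|,
\]
so the first clause of \eqref{0407df:comp} fails, while $(EA)_2=\emptyset$ together with $u\in(EA)_1\cap S'_1$ gives $|(EA)_2|=0<c\le c\cdot|(EA)_1\cap S'_1|$, so the second clause fails as well. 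Hence no $A\in EF_j^{\circ}$ witnesses compatibility of $(u,v_j)$, and $(S'_1,S'_2)$ is not compatible.

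\textbf{Main obstacle.} The core difficulty is the geometric bookkeeping in the sufficiency step, in particular confirming that $A=(x_j-b,j-1)$ really is interior to the (possibly wrap-around) subpath $EF_j$ in every case where $u$ avoids the shadow of $v_j$. Once the three positional sub-cases for $E$ are pinned down, both implications reduce to a clean count of horizontal and vertical edges along $AF_j$ and $EA$.
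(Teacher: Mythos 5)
Your proof is correct and takes essentially the same route as the paper: you define $\SH(S'_2)$ as the $b$ horizontal edges immediately preceding each $v_j\in S'_2$ (all at height $j-1$ because $a'_1\ge ba_2$ forces consecutive vertical edges to be at least $b$ apart horizontally), and you use the same witness $A$, the left endpoint of the $b$-th horizontal edge preceding $v_j$, to verify the first clause of \eqref{0407df:comp}. Your explicit coordinate bookkeeping for $A\in EF_j^\circ$ and for the failure of both clauses in the necessity direction just fills in details the paper dispatches ``by inspection.''
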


\begin{proof}
In view of \eqref{eq:ej}, if  $a'_1 \geq b a_2 > 0$ then the horizontal distance between any two consecutive vertical edges on  $\mathcal{D}'$ is at least~$b$. 
We define $\SH(S'_2)$ as the set that contains the $b$ horizontal edges preceding each vertical edge from $S'_2$.
Thus $|\SH(S'_2)|= b|S'_2|$ as desired. 
We call $\SH(S'_2)$  the \emph{shadow} of $S'_2$ (hence the notation). 

First we assume that $S'_1\cap \SH(S'_2) \neq \emptyset$. 
Take $u\in S'_1\cap \SH(S'_2)$, and let~$v$ be the first vertical edge after~$u$ (moving as always in the Northeast direction). Note that $v$ is in $S'_2$ by the definition of $\SH(S'_2)$. 
By inspection, this pair of edges does \emph{not} satisfy \eqref{0407df:comp}, making the pair $(S'_1,S'_2)$ incompatible.

It remains to show the converse statement: if $S'_1\cap \SH(S'_2) = \emptyset$ then $(S'_1,S'_2)$ is compatible. 
To see that every two edges $u \in  S'_1$ and $v \in S'_2$ satisfy the first case in \eqref{0407df:comp}, we 
can choose a lattice point~$A$ as the left endpoint of the $b$-th horizontal edge in $\mathcal{D}'$ that precedes~$v$. 
\end{proof}

{\bf Case 3:}  $0 < a_1 < ba_2$. 
Recall that $a'_1 = ba_2 - a_1$, so $(a'_1,a_2)$ also falls into this case.  
Again a little algebraic reasoning shows that the desired equality $\sigma_2 (x[a_1, a_2]) = x[a'_1, a_2]$
is implied by the following statement.

\begin{lemma}
\label{lem:case3-algebraic}
Suppose $0 < a_1 < ba_2$. 
There exists a bijection $S_2 \mapsto S'_2$ from subsets of $\mathcal{D}_2$ to subsets of $\mathcal{D}'_2$ such that,
for every $S_2 \subseteq \mathcal{D}_2$, we have  $|S'_2| = a_2 - |S_2|$, and  
\begin{equation}
\label{eq:case3}
\sum_{S_1} X^{|S_1|} = (X+1)^{a_1-b|S_2|}  \sum_{S'_1} X^{|S'_1|} \ ,
\end{equation}
where the first sum is over all $S_1 \subseteq \mathcal{D}_1$ such that $(S_1,S_2)$ is compatible, while the second sum is over all 
$S'_1 \subseteq \mathcal{D}'_1$ such that $(S'_1,S'_2)$ is compatible.
\end{lemma}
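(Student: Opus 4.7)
The plan is to establish \eqref{eq:case3} pointwise via a shadow-style decomposition, together with a carefully chosen bijection $S_2 \mapsto S'_2$.

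Since $a_1 + a'_1 = ba_2$ and the bijection satisfies $|S_2| + |S'_2| = a_2$, we have $(a_1 - b|S_2|) + (a'_1 - b|S'_2|) = 0$. Thus \eqref{eq:case3} is symmetric under swapping $(\mathcal{D}, S_2) \leftrightarrow (\mathcal{D}', S'_2)$, and it suffices to treat the case $a_1 \geq b|S_2|$, in which the exponent on the right is nonnegative. Under this assumption, I would define the bijection by the height indexing of vertical edges: identify $v_j \in \mathcal{D}_2$ with $v'_j \in \mathcal{D}'_2$ and set $S'_2 = \{v'_j : v_j \notin S_2\}$, giving $|S'_2| = a_2 - |S_2|$ as required.

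To establish \eqref{eq:case3}, I would construct a shadow $\SH(S_2) \subseteq \mathcal{D}_1$ of exactly $b|S_2|$ horizontal edges, so that $F := \mathcal{D}_1 \setminus \SH(S_2)$ has size $a_1 - b|S_2|$. The shadow would be built by assigning, for each $v_j \in S_2$ in a suitable order (say, decreasing height), a block of $b$ horizontal edges preceding $v_j$ along $\mathcal{D}$, using the cyclic wraparound convention $(a_1,a_2) \sim (0,0)$ when the local supply of horizontal edges is insufficient. The hypothesis $a_1 \geq b|S_2|$ is exactly what makes it possible to choose these blocks pairwise disjoint. The goal is then to show that the decomposition $S_1 = T \sqcup R$ with $T = S_1 \cap F$ and $R = S_1 \cap \SH(S_2)$ exhibits compatible pairs $(S_1, S_2)$ in $\mathcal{D}$ as pairs $(T, R)$ where $T \subseteq F$ is arbitrary (contributing the factor $(X+1)^{a_1 - b|S_2|}$) and $R$ corresponds, via an explicit size-preserving bijection $\SH(S_2) \leftrightarrow \mathcal{D}'_1$, to a compatible $S'_1$ for $S'_2$ in $\mathcal{D}'$. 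Equating generating functions then yields \eqref{eq:case3}.

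The technical heart of the argument is verifying this correspondence, which splits into two subclaims: (i) for every $u \in F$ and $v \in S_2$, the compatibility condition \eqref{0407df:comp} is satisfied automatically, with a witness point $A$ supplied by the shadow block attached to $v$ (which contains $b$ edges, matching the first equality in \eqref{0407df:comp}); and (ii) for $u \in \SH(S_2)$, the existence of a witness $A \in EF^\circ$ in $\mathcal{D}$ for the pair $(u,v)$ is equivalent, under the bijection $\SH(S_2) \leftrightarrow \mathcal{D}'_1$, to the existence of a witness in $\mathcal{D}'$ for the corresponding pair with $S'_2$. This witness-transport is where the geometry of the two Dyck paths interacts most intricately with the cyclic wraparound structure of subpaths $AA$, and I expect it to be the main obstacle, requiring substantial case analysis (e.g., according to whether $A$ lies in $F$ or in $\SH(S_2)$, and how the block ordering affects which shadow edges are available as witnesses).
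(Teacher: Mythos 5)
Your proposal contains two concrete errors that would sink the argument. First, the bijection you choose, $S'_2=\{v'_j: v_j\notin S_2\}$ (complementation with the \emph{same} height index), does not satisfy \eqref{eq:case3}. The correct bijection, used in the paper, is $S'_2=\{v'_j: v_{a_2+1-j}\in\mathcal{D}_2-S_2\}$, i.e.\ complementation composed with the height reversal $j\mapsto a_2+1-j$; this reversal is what makes the horizontal gaps preceding $v_{a_2+1-j}$ and $v'_j$ sum to exactly $b$ (Lemma~\ref{lem:vj}), which is the duality the whole proof rests on. A small counterexample to your version: take $b=2$, $c=1$, $(a_1,a_2)=(1,2)$, so $a'_1=3$, and $S_2=\{v_1\}$. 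One checks directly that the compatible $S_1$ for $S_2=\{v_1\}$ are only $S_1=\emptyset$, so the left side of \eqref{eq:case3} is $1$; your rule sends $S_2$ to $S'_2=\{v'_2\}$, whose compatible $S'_1$ give $(1+X)^2$, and $(X+1)^{1-2}(1+X)^2=1+X\neq 1$. The paper's rule sends $S_2$ to $\{v'_1\}$, whose generating function is $1+X$, and $(X+1)^{-1}(1+X)=1$ as required.

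Second, your proposed ``explicit size-preserving bijection $\SH(S_2)\leftrightarrow\mathcal{D}'_1$'' cannot exist: $|\SH(S_2)|=b|S_2|$ while $|\mathcal{D}'_1|=ba_2-a_1$, and these agree only when $a_1=b|S'_2|$. The missing structural idea is the refinement of the shadow into the \emph{remote shadow} $\sh(S_2)\subseteq\SH(S_2)$: edges of $\SH(S_2)-\sh(S_2)$ (those of height $j-1$ for $v_j\in S_2$) are strictly forbidden in any compatible $S_1$, whereas edges of $\sh(S_2)$ are only conditionally allowed, and the bijection is between $\sh(S_2)$ and $\sh(S'_2)$ --- these \emph{do} have equal cardinality, which the paper proves piecewise via $|\sh(S_2)_{h;j}|=|\sh(S'_2)_{a_2-j;a_2-h}|$ using the duality $f(h,j;S_2)=-f(a_2-j,a_2-h;S'_2)$ (again relying on the height reversal). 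Your outline treats all of $\SH(S_2)$ uniformly as ``transportable,'' which conflates the two kinds of shadow edges. The symmetry reduction to $a_1\geq b|S_2|$ and subclaim~(i) (free edges outside the shadow contribute $(X+1)^{a_1-b|S_2|}$) are sound in spirit, but the witness-transport step cannot be repaired without both the corrected bijection and the remote-shadow decomposition.
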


Recall that, for $j = 1, \dots, a_2$, we denote by $v_j \in \mathcal{D}_2$ the vertical edge with the upper endpoint $F_j$ of height~$j$.
Let $v'_j$ have the same meaning for the Dyck path $\mathcal{D}'$. 
We define the bijection $S_2 \mapsto S'_2$ in Lemma~\ref{lem:case3-algebraic} by setting  
\begin{equation}
\label{S'2}
S'_2=\{v'_j: v_{a_2+1-j} \in \mathcal{D}_2 - S_2\}.
\end{equation}
The equality $|S'_2| = a_2 - |S_2|$ is trivial, thus to prove Lemma~\ref{lem:case3-algebraic} it remains to prove \eqref{eq:case3}. 

We deduce \eqref{eq:case3} from the next two combinatorial lemmas. 
The first of them is an analogue of Lemma~\ref{lem:shadow-case2}. 

\begin{lemma}
\label{lemma:N}
Suppose $0 < a_1 < ba_2$.
Then for every $S_2 \subseteq \mathcal{D}_2$ there exist two subsets $\sh(S_2)\subseteq \SH(S_2)\subseteq \mathcal{D}_1$ with the following properties:
\begin{enumerate}
\item $|\SH(S_2)| = \min(a_1, b |S_2|)$;

\item For a subset $S_1 \subseteq \mathcal{D}_1$, the pair $(S_1,S_2)$ is compatible if and only if\\ 
$S_1 \cap (\SH(S_2) - \sh(S_2)) = \emptyset$, and $(S_1 \cap \sh(S_2), S_2)$ is compatible.
\end{enumerate}     
\end{lemma}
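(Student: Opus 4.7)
The plan is to construct $\SH(S_2)$ and $\sh(S_2)$ explicitly, using the edges of $S_2$ to cast ``shadows'' on the horizontal edges of $\mathcal{D}$, and then to verify both properties by direct combinatorial analysis. The construction extends the one in the proof of Lemma~\ref{lem:shadow-case2}, but must now accommodate the possibility that the horizontal distance between two consecutive vertical edges of $\mathcal{D}$ is less than $b$, which is the new phenomenon created by the hypothesis $a_1 < ba_2$.

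First, I would define $\SH(S_2)$ by a greedy backward sweep. Enumerate the elements of $S_2$ from top to bottom as $v_{j_1}, v_{j_2}, \ldots$, and for each $v_{j_i}$ with upper endpoint $F_{j_i}$, move backward along $\mathcal{D}$ starting from $F_{j_i}$ and mark the next $b$ previously unmarked horizontal edges, stopping early if the origin is reached. Let $\SH(S_2)$ be the set of all marked edges. Using Lemma~\ref{lem:vj} to estimate the horizontal distances between the $F_j$'s, one checks that the sweep marks exactly $\min(a_1, b|S_2|)$ horizontal edges, which gives property~(1). Heuristically, each $v \in S_2$ forbids about $b$ horizontal edges immediately to its left; when $b|S_2| > a_1$, the local shadows merge and eventually cover the entire initial segment of horizontal edges.

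Next, I would define $\sh(S_2)$ via the following observation. If $u \in \SH(S_2)$ was marked while processing $v = v_{j_i}$, and $E, F$ denote the left endpoint of $u$ and the upper endpoint of $v$, then the first alternative of \eqref{0407df:comp} fails for every admissible $A \in EF^\circ$; this is essentially why $u$ was marked. Consequently, the compatibility of a pair $(S_1, S_2)$ with $u \in S_1$ must be witnessed by the second alternative of \eqref{0407df:comp}, whose validity depends on further horizontal edges of $S_1$ lying near $u$. I would let $\sh(S_2)$ be the subset of $\SH(S_2)$ consisting of those $u$ for which this second-alternative rescue is combinatorially possible, via an explicit rule involving the positions of the nearby $v_{j_i}$ and the local shape of $\mathcal{D}$.

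With these definitions, property~(2) would follow from a two-way argument. In the forward direction, if $(S_1, S_2)$ is compatible and $u \in S_1 \cap \SH(S_2)$, then the first alternative of \eqref{0407df:comp} fails by construction, so the second must hold; this forces $u \in \sh(S_2)$ and ensures that the compatibility witness for the pair $(u,v)$ involves only edges of $S_1 \cap \sh(S_2)$. In the reverse direction, if $S_1 \cap (\SH(S_2) - \sh(S_2)) = \emptyset$ and $(S_1 \cap \sh(S_2), S_2)$ is compatible, then for every $(u, v) \in S_1 \times S_2$, either $u \notin \SH(S_2)$, in which case the first alternative of \eqref{0407df:comp} is satisfied by taking $A$ to be the left endpoint of the first unmarked horizontal edge between $u$ and $v$, or $u \in \sh(S_2)$, in which case the restricted compatibility supplies the needed $A$. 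I expect the main obstacle to be calibrating $\sh(S_2)$ so that both directions match exactly: when several consecutive elements of $S_2$ sit within horizontal distance less than $b$ of each other, their local shadows chain together and the ``rescuable'' horizontal edges form a structured subfamily rather than an independent collection, so the self-referential nature of the compatibility condition must be carefully tracked through the construction.
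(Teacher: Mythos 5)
The architecture you describe matches the paper's: build a shadow $\SH(S_2)$ of cardinality $\min(a_1,b|S_2|)$, then carve out a remote shadow $\sh(S_2)$ so that compatibility localizes to it. But there is a genuine gap exactly where you flag ``the main obstacle'': you never define $\sh(S_2)$, and the entire content of part (2) --- as well as of the subsequent Lemmas~\ref{lemma:T} and \ref{lem:remote-pieces-equal}, which manipulate explicit pieces $\sh(S_2)_{h;j}$ of the remote shadow --- lives in that definition. The answer is much simpler than a case analysis of ``rescuability'': $\SH(S_2)-\sh(S_2)$ is precisely the set of horizontal edges of height $j-1$ for the various $v_j\in S_2$ (Definition~\ref{df:remote-shadow}). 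These edges always lie in $\SH(S_2)$ because $|(F_{j-1}F_j)_1|\le b$ by Lemma~\ref{lem:vj}, and they are unconditionally fatal: if $u\in S_1$ has height $j-1$ with left endpoint $E$ and $v_j\in S_2$, then every $A\in EF_j^\circ$ satisfies $|(AF_j)_1|<b\le b|(AF_j)_2\cap S_2|$, so the first alternative of \eqref{0407df:comp} fails, and $|(EA)_2|=0<c|(EA)_1\cap S_1|$ since $u\in(EA)_1$, so the second fails too. Note in particular that $\sh(S_2)$ is \emph{not} ``the set of edges for which a second-alternative rescue is possible'' in any absolute sense: for $u\in\sh(S_2)$, whether $(S_1,S_2)$ is compatible still depends on which other edges of $\sh(S_2)$ lie in $S_1$, which is exactly why (2) asserts only a reduction to the compatibility of $(S_1\cap\sh(S_2),S_2)$ rather than an unconditional criterion.

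Two smaller points. First, your greedy backward sweep plausibly produces the same set as the paper's $\SH(S_2)$ (Definition~\ref{df:shadow}: the local shadow at $v$ is the horizontal part of the shortest subpath $AF$ with $|(AF)_1|=b|(AF)_2\cap S_2|$), but the balance-condition formulation is what powers the supporting lemmas: it gives the strict inequality $|(A'F)_1|<b|(A'F)_2\cap S_2|$ throughout the interior of each local shadow (Lemma~\ref{lem:compatibility and inequality}), hence the failure of the first alternative there, and it yields the nesting/disjointness of local shadows (Lemma~\ref{lem:disjoint_contain}), from which the count in (1) follows; with the sweep definition you would have to re-derive all of this. Second, your forward direction is logically off: compatibility of $(S_1,S_2)$ does not ``force $u\in\sh(S_2)$'' via the second alternative --- the correct argument is the contrapositive one above (an edge of $S_1$ in $\SH(S_2)-\sh(S_2)$ kills compatibility outright), combined with the separate observation that edges of $\mathcal{D}_1-\SH(S_2)$ can be added to or removed from $S_1$ without affecting compatibility, which is what lets you pass between $S_1$ and $S_1\cap\sh(S_2)$.
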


As in Case~2, we call $\SH(S_2)$ the \emph{shadow} of $S_2$; and we refer to $\sh(S_2)$ as the \emph{remote shadow} of $S_2$
(hence the notation). 
Note that since $(a'_1, a_2)$ also falls into our current case, Lemma~\ref{lemma:N} is also applicable to the subset 
$S'_2$ given by \eqref{S'2}. 

For any $S_2 \subseteq \mathcal{D}_2$ we denote 
$$\mathcal{T}(S_2) = \{S_1 \subseteq \sh(S_2) : (S_1,S_2) \textrm{ is compatible}\} \ .$$

\begin{lemma}
\label{lemma:T}
Suppose $0 < a_1 < ba_2$.
Then for every $S_2 \subseteq \mathcal{D}_2$ there exists a bijection $\theta: \sh(S_2) \to \sh(S'_2)$
that induces a bijection $\mathcal{T}(S_2) \to \mathcal{T}(S'_2)$ (denoted by the same letter $\theta$ with some abuse of notation).  
In particular, we have $|\theta(S_1)|=|S_1|$ for all $S_1\in\mathcal{T}(S_2)$. 
\end{lemma}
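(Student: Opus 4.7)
The plan is to build $\theta$ directly out of the reversal-and-complementation rule \eqref{S'2} defining the duality $S_2 \mapsto S'_2$, and then verify that it transports compatibility by reflecting the witness point $A$ in \eqref{0407df:comp}. First I would unpack Lemma~\ref{lemma:N}: the shadow $\SH(S_2)$ consists of the horizontal edges blocked by some vertical edge of $S_2$ (those where the first option in \eqref{0407df:comp} cannot be arranged), and the remote shadow $\sh(S_2)$ is the subset of $\SH(S_2)$ where the second option can still rescue compatibility. A direct cardinality count from part~(1) of Lemma~\ref{lemma:N}, combined with the identities $a_1+a'_1=ba_2$ and $|S_2|+|S'_2|=a_2$, should yield $|\sh(S_2)|=|\sh(S'_2)|$; this is the first sanity check.

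Next I would construct $\theta$ by listing the edges of $\sh(S_2)$ from left to right and the edges of $\sh(S'_2)$ from right to left, and pairing them in order. By Lemma~\ref{lem:vj}, the horizontal positions of the upper endpoints $F_j$ in $\mathcal{D}$ and $F'_j$ in $\mathcal{D}'$ are controlled by the ceilings $\lceil j a_1/a_2 \rceil$ and $\lceil j a'_1/a_2 \rceil$, which sum in a way compatible with the reversal $j \leftrightarrow a_2+1-j$; consequently the vertical edge $v_{a_2+1-j}$ of $\mathcal{D}$, together with its neighboring remote-shadow horizontal edges, is sent to a mirrored configuration surrounding $v'_j$ in $\mathcal{D}'$. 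The equality $|\theta(S_1)|=|S_1|$ will be automatic from this definition.

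Finally I would show that $\theta$ restricts to a bijection $\mathcal{T}(S_2) \to \mathcal{T}(S'_2)$ by verifying that it transports the compatibility condition. Given $S_1 \in \mathcal{T}(S_2)$ and a pair $u' \in \theta(S_1)$, $v' \in S'_2$, I would produce a witness $A'$ for $(u',v')$ by reflecting a witness $A$ for the preimage pair $(u,v)$; the two options in \eqref{0407df:comp} should swap roles, because under the complementation in \eqref{S'2} a block of $b$ consecutive horizontal edges on one side corresponds to the complementary block on the other. The main obstacle is precisely this verification: one must check that the reflected $A'$ still lies on the relevant open subpath $(E'F')^\circ$ and that the four counts $|(AF)_1|$, $|(AF)_2 \cap S_2|$, $|(EA)_2|$, and $|(EA)_1 \cap S_1|$ transform into their primed analogues, which will likely require a case analysis according to whether $b|S_2|$ is less than or greater than $a_1$ and according to the local position of $(u,v)$ relative to the boundary between $\sh(S_2)$ and $\SH(S_2) - \sh(S_2)$.
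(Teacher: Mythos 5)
Your high-level strategy (exploit the duality \eqref{S'2} and build $\theta$ so that it transports compatibility) is the right one, but the two concrete devices you propose do not work, and they are exactly where the difficulty of this lemma lives. First, the map $\theta$ is \emph{not} the global order-reversing pairing of $\sh(S_2)$ with $\sh(S'_2)$. The correct bijection is defined piecewise: one decomposes $\sh(S_2)$ into pieces $\sh(S_2)_{h;j}$ (the height-$h$ edges whose first enclosing local shadow is $\SH(v_j;S_2)$), proves the nontrivial cardinality identity $|\sh(S_2)_{h;j}| = |\sh(S'_2)_{a_2-j;\,a_2-h}|$ via the function $f(h,j)=b|(F_hF_j)_2\cap S_2|-|(F_hF_j)_1|$, its additivity and the duality $f(h,j;S_2)=-f(a_2-j,a_2-h;S'_2)$, and then matches pieces (preserving left-to-right order \emph{within} each piece). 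The resulting arrangement of image pieces in $\mathcal{D}'$ is not the global reversal — Figure~\ref{fig:shadows} already exhibits this — and a globally reversed pairing would scramble the quantities $|\sh(S_2)_{h';j'}\cap S_1|$ on which the compatibility count depends, so there is no reason it would induce a bijection $\mathcal{T}(S_2)\to\mathcal{T}(S'_2)$. (Relatedly, your ``sanity check'' $|\sh(S_2)|=|\sh(S'_2)|$ does not follow from Lemma~\ref{lemma:N}(1) plus $a_1+a_1'=ba_2$ and $|S_2|+|S_2'|=a_2$: that lemma controls $|\SH|$, not $|\sh|$, and the number of edges removed in passing from $\SH$ to $\sh$ depends on the heights occupied by $S_2$; the global equality is only obtained by summing the piecewise identity.)

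Second, the plan to certify compatibility of $(\theta(S_1),S_2')$ by ``reflecting the witness point $A$'' cannot be carried out: $\mathcal{D}$ and $\mathcal{D}'$ are Dyck paths of different widths $a_1$ and $ba_2-a_1$, and there is no pointwise correspondence between them under which the four counts in \eqref{0407df:comp} transform into their primed analogues. Moreover the two options in \eqref{0407df:comp} do not swap roles: for $u\in\sh(S_2)_{h;j}$ and $v=v_j$ the first option fails identically (that is what being in the shadow means), so everything hinges on the second option, and the same is true for the image pair. The actual argument is contrapositive and works at the level of the piecewise counts $g(h',j')=c|\sh(S_2)_{h';j'}\cap S_1|$: non-compatibility of $(S_1,S_2)$ is encoded as the system of inequalities \eqref{eq:failure-second-case-thru-shadows}, a combinatorial extraction produces a subinterval $[h,\ell]$ on which the ``dual'' system \eqref{eq:failure-dual} holds, and because $\theta$ preserves the counts piece by piece this dual system is literally the non-compatibility system for $(\theta(S_1),S_2')$ on $\mathcal{D}'$; the symmetry $\theta^{-1}=\theta'$ then gives the bijection. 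Without the piece decomposition and this inequality-transfer argument, your proposal has no mechanism for the crucial step, so as written it has a genuine gap.
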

\medskip

Before proving Lemmas~\ref{lemma:N} and \ref{lemma:T}, we show that they indeed imply \eqref{eq:case3}.
First of all, by Lemma~\ref{lemma:N}(1) we have $|\mathcal{D}_1-\SH(S_2)| = a_1 - \min(a_1, b |S_2|) = [a_1 - b |S_2|]_+$. 
By the same token, we have 
$$|\mathcal{D}'_1-\SH(S'_2)| = [a'_1 - b |S'_2|]_+ = [-a_1 + b |S_2|]_+ = [a_1 - b |S_2|]_+ - (a_1-b|S_2|)$$
(for the last equality note that $[a]_+ - a = [-a]_+$ for all $a \in \ZZ$). 

Using Lemma~\ref{lemma:N}(2) to split up $S_1$ into a portion outside $\SH(S_2)$ and a portion inside $\sh(S_2)$, 
the left-hand side of \eqref{eq:case3} can be expressed as
$$(X+1)^{[a_1 - b |S_2|]_+} \sum_{S_1\in\mathcal{T}(S_2)} X^{|S_1|}.$$
Analogously, the right-hand side of \eqref{eq:case3} can be expressed as
$$(X+1)^{[a_1 - b |S_2|]_+} \sum_{S'_1\in\mathcal{T}(S'_2)} X^{|S'_1|} \ .$$
By Lemma~\ref{lemma:T}, these two expressions are equal to each other, finishing the proof of \eqref{eq:case3}.   
\qed

\medskip

Now we turn to the proof of Lemma~\ref{lemma:N}.
Our first task is to define the shadow $\SH(S_2)$ and the remote shadow $\sh(S_2)$ for every subset $S_2 \subseteq \mathcal{D}_2$. 

\begin{definition}
\label{df:shadow}
For every vertical edge $v\in S_2\subseteq\mathcal{D}_2$ with the upper endpoint $F$, let $\SH(v;S_2)$ be the set of horizontal edges $(AF)_1$
in the shortest subpath $AF$ of $\mathcal{D}$ such that $|(AF)_1|=b|(AF)_2\cap S_2|$. 
If there is no such a subpath, we define $\SH(v;S_2)$ as  $(FF)_1 = \mathcal{D}_1$.
We call $\SH(v;S_2)$ the local shadow of $S_2$ at $v$. 
We define the shadow of $S_2$ by setting $\SH(S_2)=\cup_{v\in S_2}\SH(v;S_2)$. 
\end{definition}

The definition of $\sh(S_2)$ requires a little preparation. 
Note that Definition~\ref{df:shadow} implies at once that, unless $\SH(v;S_2) = \mathcal{D}_1$, we have $|\SH(v;S_2)| \geq b$. 
On the other hand, if $v = v_j$ then in view of \eqref{eq:ej}, we have $|(F_{j-1} F_j)_1| \leq b$. 
We conclude that the local shadow $\SH(v_j;S_2)$ always contains all the horizontal edges in $\mathcal{D}_1$ of height $j-1$. 
This puts the following definition on the firm ground.

\begin{definition}
\label{df:remote-shadow}
For every $S_2 \subseteq\mathcal{D}_2$ the remote shadow $\sh(S_2)$ is obtained from $\SH(S_2)$ by removing, for each
$v_j \in S_2$,  all the horizontal edges in $\mathcal{D}_1$ of height $j-1$. 
\end{definition}

\begin{example} 
We illustrate the above definions and statements with the following example. 
Let $a_1=13$, $a_2=8$, $b=4$, and $S_2=\{v_2,v_6,v_8\}$. Then $a'_1=19$, and $S'_2=\{v'_2,v'_4,v'_5,v'_6,v'_8\}$.
The shadows of various kinds related to $S_2$ are shown in the left part of Figure~\ref{fig:shadows}, while those related to $S'_2$ are shown in the right part (using the same conventions). 
The vertical edges in $S_2$ are drawn in dotted lines. 
The three local shadows $\SH(v_2;S_2)$, $\SH(v_6;S_2)$ and $\SH(v_8;S_2)$ are the projections of the respective grey strips to $\mathcal{D}_1$.
Thus we have  $\SH(S_2)=\mathcal{D}_1-\{u_5\}$. 
The edges in $\SH(S_2) - \sh(S_2)$ are those immediately below the  shaded strips.
The edges in the remote shadow $\sh(S_2)$ are drawn as dotted horizontal edges; they are labeled \circled{1} -- \circled{8}. 
The map $\theta$ (to be defined later) sends each circled edge in the left part of Figure~\ref{fig:shadows} to the edge with the same label in the right part.

\begin{figure}[h]
\begin{tikzpicture}[scale=.4]
\draw[step=1,color=gray!30] (0,0) grid (13,8);
\draw[line width=.5,color=black] (0,0)--(2,0)--(2,1)--(4,1)--(4,2)--(5,2)--(5,3)--(7,3)--(7,4)--(9,4)--(9,5)--(10,5)--(10,6)--(12,6)--(12,7)--(13,7)--(13,8);
\fill[black!20] (0,1) rectangle (4,2-.1);\draw[line width=1.2, dotted](4,1)--(4,2);
\fill[black!20] (6,5) rectangle (10,6-.1);\draw[line width=1.2, dotted](10,5)--(10,6);
\fill[black!20] (5,7) rectangle (13,8-.1);\draw[line width=1.2, dotted](13,7)--(13,8);
\draw[line width=1.2, dotted](0,0)--(2,0);
\draw[line width=1.2, dotted](5,3)--(7,3);
\draw[line width=1.2, dotted](7,4)--(9,4);
\draw[line width=1.2, dotted](10,6)--(12,6);
\draw(0.5,0.2) node[anchor=north]{\Tinycircled{1}};
\draw(1.5,0.2) node[anchor=north]{\Tinycircled{2}};
\draw(5.5,3.2) node[anchor=north]{\Tinycircled{3}};
\draw(6.5,3.2) node[anchor=north]{\Tinycircled{4}};
\draw(7.5,4.2) node[anchor=north]{\Tinycircled{5}};
\draw(8.5,4.2) node[anchor=north]{\Tinycircled{6}};
\draw(10.5,6.2) node[anchor=north]{\Tinycircled{7}};
\draw(11.5,6.2) node[anchor=north]{\Tinycircled{8}};
\begin{scope}[shift={(16,0)}]
\draw[step=1,color=gray!30] (0,0) grid (19,8);
\fill[black!20] (1,1) rectangle (5,2-.1);\draw[line width=1.2, dotted](5,1)--(5,2);
\fill[black!20] (6,3) rectangle (10,4-.1);\draw[line width=1.2,dotted](10,3)--(10,4);
\fill[black!20] (0,4) rectangle (12,5-.1);\draw[line width=1.2,dotted](12,4)--(12,5);
\fill[black!20] (0,5) rectangle (19,6-.1);\draw[line width=1.2,dotted](15,5)--(15,6);
\fill[black!20] (15,7) rectangle (19,8-.1);\draw[line width=1.2,dotted](19,7)--(19,8);
\draw[line width=.5,color=black] (0,0)--(3,0)--(3,1)--(5,1)--(5,2)--(8,2)--(8,3)--(10,3)--(10,4)--(12,4)--(12,5)--(15,5)--(15,6)--(17,6)--(17,7)--(19,7)--(19,8);
\draw[line width=1.2,dotted](0,0)--(3,0);
\draw[line width=1.2,dotted](5,2)--(8,2);
\draw[line width=1.2,dotted](15,6)--(17,6);
\draw(15.5,6.2) node[anchor=north]{\Tinycircled{1}};
\draw(16.5,6.2) node[anchor=north]{\Tinycircled{2}};
\draw(0.5,.2) node[anchor=north]{\Tinycircled{3}};
\draw(5.5,2.2) node[anchor=north]{\Tinycircled{4}};
\draw(6.5,2.2) node[anchor=north]{\Tinycircled{5}};
\draw(7.5,2.2) node[anchor=north]{\Tinycircled{6}};
\draw(1.5,.2) node[anchor=north]{\Tinycircled{7}};
\draw(2.5,.2) node[anchor=north]{\Tinycircled{8}};
\end{scope}
\end{tikzpicture}
\caption{Shadows.}
\label{fig:shadows}
\end{figure}

\end{example}

\medskip

Our next goal is to prove Lemma~\ref{lemma:N}(2). 
A look at the definitions~\ref{df:compatible} and~\ref{df:shadow} makes it clear that the property that $(S_1, S_2)$ is compatible is not affected 
by adding to or removing from a subset $S_1 \subseteq \mathcal{D}_1$ any subset of $\mathcal{D}_1 - \SH(S_2)$. 
Thus in proving Lemma~\ref{lemma:N}(2) we can assume that $S_1 \subseteq \SH(S_2)$. 
An easy inspection shows that if $S_1$ contains a horizontal edge $u$ of height $j-1$ then $u$ and $v = v_j$ cannot satisfy \eqref{0407df:comp}. 
Thus, if $(S_1, S_2)$ is compatible then $S_1 \cap (\SH(S_2) - \sh(S_2)) = \emptyset$, finishing the proof of Lemma~\ref{lemma:N}(2).

\medskip

To complete the proof of Lemma~\ref{lemma:N} it remains to show the equality $|\SH(S_2)| = \min(a_1, b |S_2|)$. 
We start with the following observation (recall that the notation $AB^\circ$ stands for  the set of interior lattice points of a subpath $AB$ of $\mathcal{D}$, i.e., it is obtained from $AB$ by removing the endpoints $A$ and $B$).

\begin{lemma}
\label{lem:compatibility and inequality}
Suppose $v\in S_2\subseteq\mathcal{D}_2$, and let $\SH(v;S_2) = (AF)_1$ be as in Definition~\ref{df:shadow}.
Then we have $|(A'F)_1|<b|(A'F)_2\cap S_2|$ for every $A'\in AF^\circ$.  
\end{lemma}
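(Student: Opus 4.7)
The plan is to recast the claim via a single integer-valued function and apply a discrete intermediate value argument. Introduce
$\psi(X) = b|(XF)_2 \cap S_2| - |(XF)_1|$
for each lattice point $X$ on the subpath $AF$; the assertion of the lemma is precisely that $\psi(A') > 0$ for every $A' \in AF^\circ$. By the definition of $A$ in Definition~\ref{df:shadow}, we have $\psi(A) = 0$, and any strictly shorter subpath of $\mathcal{D}$ ending at $F$ must have $\psi \neq 0$ (else it would contradict the minimality of $AF$).

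List the lattice points of $AF$ in Northeast order as $A = X_0, X_1, \ldots, X_N = F$. Crossing the edge $e_i = X_{i-1}X_i$ changes $\psi$ by $+1$ if $e_i$ is horizontal, by $-b$ if $e_i$ is a vertical edge belonging to $S_2$, and by $0$ otherwise. A key boundary datum is $\psi(X_{N-1}) = b > 0$: the edge $e_N$ entering $F$ must be $v$ itself, because $F$ is the upper endpoint of $v$ and each lattice point on the Dyck path has a unique incoming edge; and $v \in S_2$ by hypothesis, so $(X_{N-1}F)_1 = \emptyset$ and $|(X_{N-1}F)_2 \cap S_2| = 1$.

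With these ingredients the proof proceeds by contradiction. Suppose $\psi(X_i) < 0$ for some $1 \leq i \leq N-1$. Since $\psi(X_{N-1}) = b > 0$, there is an index $j$ with $\psi(X_j) < 0 \leq \psi(X_{j+1})$. The only $\psi$-increasing increment is $+1$, so the step $X_j \to X_{j+1}$ must be horizontal and $\psi(X_{j+1}) = \psi(X_j) + 1 \leq 0$; combined with $\psi(X_{j+1}) \geq 0$ this forces $\psi(X_{j+1}) = 0$, contradicting the minimality clause since $X_{j+1}F$ is strictly shorter than $AF$. Hence $\psi \geq 0$ on $AF^\circ$, and combined with $\psi \neq 0$ at each interior point, one concludes $\psi > 0$ throughout $AF^\circ$, which is exactly the statement of the lemma.

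The main subtlety I anticipate is the edge case where Definition~\ref{df:shadow} returns $A = F$ (no proper subpath realizes the equality); here the analysis must be carried out around the full Dyck cycle and the notion of interior point interpreted accordingly. The same discrete intermediate value mechanism should apply without essential change, since the argument uses only the structural facts that $\psi$ jumps upward by at most $+1$ at a time and that any interior zero of $\psi$ would contradict the definition of $A$.
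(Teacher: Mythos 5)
Your proof is correct and is essentially the paper's own argument: the paper introduces the same function $f(A') = b|(A'F)_2\cap S_2| - |(A'F)_1|$, starts from the value $b$ at the lower endpoint of $v$, and uses the fact that moving along the path this quantity changes by steps that cannot skip over $0$, so it stays positive until it first vanishes at $A$. The only differences are cosmetic (you traverse Northeast and argue by contradiction, the paper traverses Southwest and argues directly).
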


\begin{proof}
Let $f(A') = b|(A'F)_2\cap S_2| - |(A'F)_1|$. 
If $A'$ is the lower endpoint of $v$ then $f(A') = b > 0$. 
Now let us move this point away from $F$ (in the Southwest direction) one edge at a time. 
Clearly, at each step the value of $f(A')$ either increases by $b$, stays constant, or decreases by $1$. 
It follows that $f(A')$ remains positive until it first reaches the value~$0$. 
This completes the proof. 
\end{proof}

We need one more lemma to finish the proof of Lemma~\ref{lemma:N}(1).  

\begin{lemma}
\label{lem:disjoint_contain}
If $v$ and $v'$ are distinct vertical edges from $S_2$, and both local shadows $\SH(v;S_2)$ and $\SH(v';S_2)$ 
are different from $\mathcal{D}_1$,  then either these local shadows are disjoint, or one of them is a proper
subset of another. 
\end{lemma}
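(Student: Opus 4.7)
The plan is to recast the compatibility relation via a backward-walk function and derive the dichotomy from its first-return structure. For each $v\in S_2$ with upper endpoint $F_v$, define $g_{F_v}(k):=b|(P_k F_v)_2\cap S_2|-|(P_k F_v)_1|$, where $P_k$ is the point reached by moving SW from $F_v$ along $\mathcal{D}$ by $k$ edges (with the cyclic convention identifying $(a_1,a_2)$ with $(0,0)$). This is a lattice walk starting at $0$ that changes by $+b$, $0$, or $-1$ per step, and by Definition~\ref{df:shadow} the arc $R_v$ underlying $\SH(v;S_2)$ has length $k^{**}$ equal to the smallest positive first-return time $g_{F_v}(k^{**})=0$; the hypothesis $\SH(v;S_2)\neq\mathcal{D}_1$ is exactly the assertion $k^{**}<a_1+a_2$. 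Lemma~\ref{lem:compatibility and inequality} already guarantees $g_{F_v}(k)>0$ for all $0<k<k^{**}$.

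Assuming without loss of generality $j<j'$, write $F=F_j$, $F'=F_{j'}$, and let $m=d(F,F')$ be the short NE-distance from $F$ to $F'$ along $\mathcal{D}$, with $m'=(a_1+a_2)-m$ its complement. The cornerstone of the argument is the additivity identity: since traveling SW from $F'$ past $F$ traces the very same edges as starting fresh from $F$,
\[
g_{F'}(m+k)=g_{F'}(m)+g_F(k)\quad(0\le k\le m'),\qquad g_F(m'+k)=g_F(m')+g_{F'}(k)\quad(0\le k\le m),
\]
from which one also reads off $g_F(m')+g_{F'}(m)=b|S_2|-a_1$. I would then split the argument by the relative position of $m$ and the first-return time $k^*$ of $g_{F'}$.

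When $m<k^*$, so $F$ lies strictly inside the shadow arc of $v'$, positivity of $g_{F'}(m)$ together with the first identity forces $g_{F'}$ to stay strictly positive throughout $(m,m+k^{**}]$, yielding $k^*>m+k^{**}$ and hence $R_v\subsetneq R_{v'}$; the mandatory drop of $g_{F'}$ from $g_{F'}(m+k^{**})=g_{F'}(m)>0$ back down to $0$ past position $m+k^{**}$ supplies the extra horizontal edge needed for proper shadow containment. When $m>k^*$, a symmetric application of the second identity gives either $k^{**}\le m'$ (so $R_v$ stays on the complementary arc and the shadows are disjoint) or $k^{**}>m'+k^*$, yielding $R_{v'}\subsetneq R_v$ properly by the same drop-of-$g_F$ bookkeeping. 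The delicate middle case is $m=k^*$, where $A'=F$ and a priori $R_v$ could wrap past $F'$ to overlap $R_{v'}$ in an unwanted way; my plan is to rule this out by contradiction: if $k^{**}>m'$ then $g_F(m')=b|S_2|-a_1>0$, and the second identity together with $g_{F'}(k)\ge 0$ on $[0,m]$ keeps $g_F$ strictly positive on all of $(0,a_1+a_2)$, forcing $\SH(v;S_2)=\mathcal{D}_1$ and contradicting the hypothesis; hence $k^{**}\le m'$ and the two shadows share no horizontal edges.

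The main obstacle I anticipate is precisely this middle case $m=k^*$: without the hypotheses $\SH(v;S_2)\neq\mathcal{D}_1$ and $\SH(v';S_2)\neq\mathcal{D}_1$ there are genuinely configurations of arcs that overlap without being nested, so the closing step requires the full additivity identity together with both first-return hypotheses to preclude them. Once this case is disposed of, the two asymmetric cases fall into line by the same walk-positivity argument, with the drop of $g_F$ or $g_{F'}$ to zero automatically producing the extra horizontal edge that makes the containment proper.
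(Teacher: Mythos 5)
Your argument is correct, and it rests on exactly the same two ingredients as the paper's proof: the strict positivity of the defect function on the interior of a local shadow arc (Lemma~\ref{lem:compatibility and inequality}) and the additivity of that function under concatenation of subpaths. The difference is purely organizational. The paper notes that the only configuration to exclude is a partial overlap, i.e.\ $A \in {A'F'}^\circ$ and $F' \in AF^\circ$, and kills it in one line by adding the two strict inequalities from Lemma~\ref{lem:compatibility and inequality} for the pieces $AF'$ and $F'F$ to contradict the defining equality $|(AF)_1| = b|(AF)_2\cap S_2|$; your additivity identity $g_{F'}(m+k)=g_{F'}(m)+g_F(k)$ is this same summation in disguise. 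Your exhaustive three-case analysis on the position of $F$ relative to the first-return time $k^*$ of $g_{F'}$ reaches the same conclusion at considerably greater length, but it does deliver two things the paper leaves implicit: the verification that nested shadow arcs give a \emph{proper} containment of the horizontal edge sets (via the mandatory drop of the walk back to zero), and an explicit treatment of the boundary case $A'=F$. One small point of hygiene: your convention $g_{F_v}(0)=0$ silently overrides the paper's convention that the subpath $FF$ is all of $\mathcal{D}$, and the hypothesis $\SH(v;S_2)\neq\mathcal{D}_1$ should be read as ``the first return time exists and is less than $a_1+a_2$''; both are harmless but worth stating.
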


\begin{proof}
Let $\SH(v;S_2) = (AF)_1$ and $\SH(v';S_2) = (A'F')_1$ in accordance with Definition~\ref{df:shadow}. 
It suffices to show that the lattice paths $AF$ and $A'F'$ cannot overlap, i.e., that it is impossible 
to have $A \in {A'F'}^\circ$ and $F' \in AF^\circ$. 
Indeed if these inclusions were true, by Lemma~\ref{lem:compatibility and inequality} we would have
$$|(AF')_1|<b|(AF')_2\cap S_2|, \quad |(F'F)_1|<b|(F'F)_2\cap S_2| \ .$$
Adding up these two inequalities yields $|(AF)_1|<b|(AF)_2\cap S_2|$, contradicting the definition of 
$\SH(v;S_2)$. 
\end{proof}

Now everything is ready for a proof of the desired equality $|\SH(S_2)| = \min(a_1, b |S_2|)$. 
It follows easily from the next two claims:
\begin{align}
\label{eq:local-shadows1}
&\text{If, for a given $S_2$, all local shadows $\SH(v;S_2)$ are proper subsets}\\
\nonumber
& \text{of $\mathcal{D}_1$ then $|\SH(S_2)| = b |S_2|$; 
in particular, in this case $b|S_2| \leq a_1=|\mathcal{D}_1|$.}\\ 
\label{eq:local-shadows2}
&\text{If $b|S_2| < a_1$ then all local shadows $\SH(v;S_2)$ are proper subsets of $\mathcal{D}_1$.} 
\end{align}

\smallskip

\noindent {\it Proof of \eqref{eq:local-shadows1}.} In view of Lemma~\ref{lem:disjoint_contain}, the shadow $\SH(S_2)$ is the disjoint union of
\emph{maximal} local shadows $\SH(v;S_2)$ (those not contained in another local shadow). 
A maximal local shadow $\SH(v;S_2)$ has cardinality $|\SH(v;S_2)| = b |\{v' \in S_2: \SH(v';S_2) \subseteq \SH(v;S_2)\}|$.
Adding up these cardinalities, we conclude that $|\SH(S_2)| = b |S_2|$, as claimed. 

\smallskip

\noindent {\it Proof of \eqref{eq:local-shadows2}.} Let $v$ be a vertical edge in $S_2$ with the upper endpoint~$F$, and let 
the local shadow $\SH(v;S_2)$ be expressed as usual: $\SH(v;S_2) = (AF)_1$. 
We need to show that $A \neq F$.  
Consider a lattice point $A' \in \mathcal{D}$ such that $|(A'F)_1| = b |S_2|$. 
In view of the assumption $b|S_2| < a_1$, we have $A' \neq F$.
Since $|(A'F)_1| \geq b|(A'F)_2\cap S_2|$, Lemma~\ref{lem:compatibility and inequality} implies that $A'$ does not belong to 
$AF^\circ$. 
Therefore, $A \neq F$, finishing the proofs of \eqref{eq:local-shadows2} and of Lemma~\ref{lemma:N}. 
\qed

\medskip

Now we turn to the proof of Lemma~\ref{lemma:T}.
To construct a desired bijection $\theta: \sh(S_2) \to \sh(S'_2)$, we break the remote shadow
$\sh(S_2)$ into the disjoint union of pieces $\sh(S_2)_{h;j}$ defined as follows.

\begin{definition}
\label{def:remote-pieces}
Let $h$ and $j$ be integers such that $0 \leq h < a_2$, and $0 < j \leq a_2$. 
We denote by $\sh(S_2)_{h;j}$ the set of horizontal edges $u$ of height
$h$ in $\mathcal{D}_1$ such that $u \in \SH(v_j;S_2)$, and $v_j$ is the
first edge after $u$ with this property (that is, the path $E F_j$ is shortest possible, where 
$E$ is the left endpoint of $u$). 
\end{definition}

Clearly, each piece $\sh(S_2)_{h;j}$ is the set of edges in some horizontal interval 
in $\mathcal{D}_1$, and the remote shadow $\sh(S_2)$ is indeed the disjoint union of pieces $\sh(S_2)_{h;j}$. 
Also $\sh(S_2)_{h;j}$ is empty unless $v_j \in S_2$ and $v_{h+1} \in \mathcal{D}_2 - S_2$.

\begin{lemma}
\label{lem:remote-pieces-equal}
For any $h$ and $j$ as in Definition~\ref{def:remote-pieces}, we have
$|\sh(S_2)_{h;j}| = |\sh(S'_2)_{a_2-j; a_2 - h}|$.
\end{lemma}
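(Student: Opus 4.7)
Plan: The strategy is to derive a closed-form expression for $|\sh(S_2)_{h;j}|$ and verify its invariance under the reflection $(a_1,S_2,h,j)\mapsto(a'_1,S'_2,a_2-j,a_2-h)$.

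First I would show that $\sh(S_2)_{h;j}$ is a contiguous block of horizontal edges at height $h$, terminating just before $v_{h+1}$: as $u$ shifts rightward at fixed height, the path-distance $|EF_j|$ strictly decreases and no new $S_2$-element enters the range, so the condition from Definition~\ref{def:remote-pieces} persists on contiguous intervals. Next, Lemma~\ref{lem:disjoint_contain} implies that the local shadows $\SH(v;S_2)$ containing a fixed horizontal edge form a chain under inclusion, so the pieces at a given height $h$ form a telescoping partition of the height-$h$ edges in $\SH(S_2)$. Writing $N_j(h)$ for the number of height-$h$ edges in $\SH(v_j;S_2)$, and letting $j_1<j_2<\cdots$ be the $S_2$-indices exceeding $h$ whose local shadows reach height $h$, we obtain
\[
|\sh(S_2)_{h;j_k}|=N_{j_k}(h)-N_{j_{k-1}}(h), \qquad N_{j_0}(h):=0.
\]

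Next, using the defining shadow equation $|(A_jF_j)_1|=b\,|(A_jF_j)_2\cap S_2|$, I would express $N_j(h)$ in closed form in terms of the row lengths $e_i:=\lceil(i+1)a_1/a_2\rceil-\lceil ia_1/a_2\rceil$ and the incidence pattern of $S_2$ on $\{v_{h+1},\ldots,v_j\}$. Then I would apply the same formula to the reflected data: the involution $v'_k\in S'_2\Longleftrightarrow v_{a_2+1-k}\notin S_2$ complements the $S_2$-incidence pattern on the range $\{v'_{a_2-j+1},\ldots,v'_{a_2-h}\}$, while the identity $a'_1=ba_2-a_1$ yields the exact aggregate displacement identity
\[
\bigl(\lceil ja_1/a_2\rceil-\lceil ha_1/a_2\rceil\bigr)+\bigl(\lceil(a_2-h)a'_1/a_2\rceil-\lceil(a_2-j)a'_1/a_2\rceil\bigr)=b(j-h),
\]
which matches the $b$-weighted partition of $\{v_{h+1},\ldots,v_j\}$ into $S_2$-elements and their complement. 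Combining these two ingredients produces the desired equality $|\sh(S_2)_{h;j}|=|\sh(S'_2)_{a_2-j;a_2-h}|$.

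The main obstacle is that the aggregate displacement identity above does \emph{not} hold row-by-row: individual row lengths $e_i$ and $e'_{a_2-1-i}$ sum to $b$ only up to a $\pm 1$ correction depending on whether $a_2\mid(i+1)a_1$. Consequently, while $N_j(h)$ and $N'_{a_2-h}(a_2-j)$ have analogous structures, they differ on each row stratum by small ceiling/floor corrections, and one must verify that these corrections cancel upon taking the telescoping difference $N_{j_k}(h)-N_{j_{k-1}}(h)$. This requires a case analysis according to whether the starting point $A_j$ of $\SH(v_j;S_2)$ lies strictly below, exactly at, or strictly above height $h$ (corresponding to the piece being the entire row, a proper suffix, or empty), and tracking how each case transforms under the reflection. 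Once this bookkeeping is carried out, the two formulas match term by term.
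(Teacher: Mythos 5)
Your reflection identity for the ceiling differences is exactly the right duality: combined with the complementation $|(F_hF_j)_2\cap S_2| + |(F'_{a_2-j}F'_{a_2-h})_2\cap S'_2| = j-h$ coming from \eqref{S'2}, it says that the quantity $f(h,j) = b|(F_h F_j)_2\cap S_2| - |(F_h F_j)_1|$ satisfies $f(h,j;S_2) = -f(a_2-j,a_2-h;S'_2)$, which is precisely Lemma~\ref{lem:f-duality}. The nesting of the local shadows containing a fixed edge (via Lemma~\ref{lem:disjoint_contain}) and the contiguity of the pieces are also correctly observed. So the skeleton of your plan --- a closed form for the piece cardinality plus reflection invariance --- is the right one.

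The gap is that you never produce a formula for $|\sh(S_2)_{h;j}|$ that is manifestly symmetric under the reflection, and the telescoping you propose cannot be matched ``term by term'' as claimed. The piece $\sh(S_2)_{h;j}$ sits in the telescoping decomposition of the height-$h$ row of $\mathcal{D}$, where the varying index is the target $j_1<j_2<\cdots$; its reflected partner $\sh(S'_2)_{a_2-j;a_2-h}$ sits in the decomposition of the height-$(a_2-j)$ row of $\mathcal{D}'$, where the varying index is now the \emph{height} $a_2-j$ rather than the target $a_2-h$. The two telescopings are organized transversally to each other, so equality of individual differences $N_{j_k}(h)-N_{j_{k-1}}(h)$ does not follow from matching the decompositions, and the row-by-row bookkeeping with the $e_i$'s that you flag as the ``main obstacle'' is a symptom of working with the wrong invariants. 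What is needed --- and what the paper supplies in Lemma~\ref{lem:remote-pieces-cardinality} --- is the closed form
$$|\sh(S_2)_{h;j}| = \min_{h<k<j}\min\bigl(f(k,j),\,-f(h,k)\bigr),$$
valid exactly when $f(h,k)<0<f(k,j)$ for all $h<k<j$; both the formula and the nonemptiness criterion are visibly invariant under $(h,j,S_2)\mapsto(a_2-j,a_2-h,S'_2)$, since the duality above together with additivity \eqref{eq:f-additive} swaps the two arguments of the inner minimum. Deriving this formula --- in particular, showing that the minimum over the chain of nested shadows (the set $L$ in the paper's proof) coincides with the minimum over \emph{all} intermediate $k$ --- is the substantive content of the proof, and your plan defers precisely this step to unexamined ``bookkeeping.''
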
 

Lemma~\ref{lem:remote-pieces-equal} allows us to define a desired bijection 
$\theta: \sh(S_2) \to \sh(S'_2)$ as follows:
\begin{align}
\label{eq:def-theta}
&\text{for each $h$ and $j$ as above, $\theta$ sends 
$\sh(S_2)_{h;j}$ onto}\\ 
\nonumber
&\text{$\sh(S'_2)_{a_2-j; a_2 - h}$ preserving the left-to-right order.}
\end{align}
Clearly, $\theta$ is indeed a bijection $\sh(S_2) \to \sh(S'_2)$; furthermore, \eqref{eq:def-theta}
makes it clear that the inverse bijection $\theta^{-1}$ is the map $\theta': \sh(S'_2) \to \sh(S_2)$ 
defined in the same way as $\theta$ but with $S_2$ and $S'_2$ interchanged. 

\medskip

To prove Lemma~\ref{lem:remote-pieces-equal} we introduce the following notation: 
for each $h$ and $j$ such that $0 \leq h < j \leq a_2$, define an integer $f(h,j) = f(h,j;S_2)$ by setting
\begin{equation}
\label{eq:def-fhj}
f(h,j) = b|(F_h F_j)_2 \cap S_2| - |(F_h F_j)_1| ,
\end{equation}
where the notation $F_j$ is from Lemma \ref{lem:vj}. 

The definition implies at once the following useful \emph{additive property}:
\begin{equation}
\label{eq:f-additive}
\text{$f(h,k) + f(k,j) = f(h,j)$ whenever $h < k < j$.}
\end{equation}

The following ``duality relation" is a direct consequence of Lemma~\ref{lem:vj} and the definition of $S'_2$ 
given by \eqref{S'2}. 

\begin{lemma}
\label{lem:f-duality}
For every $h$ and $j$ such that $0 \leq h < j \leq a_2$, we have $f(h,j; S_2) = - f(a_2 - j, a_2 - h;S'_2)$.
\end{lemma}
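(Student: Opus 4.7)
The plan is to split the sum $f(h,j;S_2) + f(a_2-j,a_2-h;S'_2)$ into its vertical and horizontal components and show each evaluates to $b(j-h)$ with opposite signs, so the total vanishes.

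First I would handle the vertical contribution. By definition, $(F_h F_j)_2$ consists of the vertical edges $v_\ell$ for $h < \ell \le j$, and similarly $(F'_{a_2-j} F'_{a_2-h})_2$ consists of $v'_k$ for $a_2-j < k \le a_2-h$. The defining formula \eqref{S'2} for $S'_2$ makes the substitution $\ell = a_2 + 1 - k$ into a bijection between $\{k : a_2-j < k \le a_2-h, \; v'_k \in S'_2\}$ and $\{\ell : h < \ell \le j, \; v_\ell \in \mathcal{D}_2 - S_2\}$. Since the full interval $\{\ell : h < \ell \le j\}$ has $j-h$ elements, this yields
\begin{equation*}
b|(F_h F_j)_2 \cap S_2| + b|(F'_{a_2-j} F'_{a_2-h})_2 \cap S'_2| = b(j-h).
\end{equation*}

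Next I would compute the horizontal contribution using Lemma~\ref{lem:vj}. We have $|(F_h F_j)_1| = \lceil j a_1/a_2 \rceil - \lceil h a_1/a_2 \rceil$, and, applying the same formula with $a_1$ replaced by $a'_1 = ba_2 - a_1$,
\begin{equation*}
|(F'_{a_2-j} F'_{a_2-h})_1| = \lceil (a_2-h)a'_1/a_2 \rceil - \lceil (a_2-j)a'_1/a_2 \rceil.
\end{equation*}
The key identity is $\lceil k a'_1/a_2 \rceil = kb - \lfloor k a_1/a_2 \rfloor$, together with the standard complementation $\lfloor (a_2 - k) a_1/a_2 \rfloor = a_1 - \lceil k a_1/a_2 \rceil$ (valid because $a_1/a_2$ being non-integer produces canceling ceiling/floor adjustments, and trivially if it is an integer). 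Combining these, the $\lceil \cdot \rceil$ contributions telescope and give
\begin{equation*}
|(F_h F_j)_1| + |(F'_{a_2-j} F'_{a_2-h})_1| = b(j-h).
\end{equation*}

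Subtracting the horizontal sum from the vertical sum yields $f(h,j;S_2) + f(a_2-j, a_2-h; S'_2) = 0$, which is the claim. There is no real obstacle here beyond bookkeeping; the only thing to watch is the careful translation between $\lceil\cdot\rceil$ on $\mathcal{D}$ and $\lfloor\cdot\rfloor$ on $\mathcal{D}'$ coming from the substitution $a_1 \mapsto ba_2 - a_1$, which is where the two Dyck paths' geometry is compared most directly.
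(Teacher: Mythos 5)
Your proof is correct and is exactly the computation the paper has in mind: the authors state the lemma as "a direct consequence of Lemma~\ref{lem:vj} and the definition of $S'_2$," and your vertical-count bijection via \eqref{S'2} together with the ceiling/floor manipulation $\lceil k a'_1/a_2\rceil = kb - \lfloor k a_1/a_2\rfloor$ fills in precisely those omitted details. (One tiny quibble: the complementation identity $\lfloor (a_2-k)a_1/a_2\rfloor = a_1 - \lceil k a_1/a_2\rceil$ is just $\lfloor n - x\rfloor = n - \lceil x\rceil$ for integer $n$, so no case split on whether $a_1/a_2$ is an integer is needed.)
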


After this preparation we turn to the proof of Lemma~\ref{lem:remote-pieces-equal}.
It is enough to treat the case where $0 \leq h < j \leq a_2$ (the case where $h \geq j$ can be reduced to this one
by some adjustment of indices caused by the convention that the path $F_h F_j$ passes through $(a_1, a_2)$ and then continues from the origin).
We also assume that $v_j \in S_2$ and $v_{h+1} \in \mathcal{D}_2 - S_2$ (clearly, this condition then also holds if we replace $\mathcal{D}$ with 
$\mathcal{D}'$, and the triple $(h,j,S_2)$ with $(a_2 - j, a_2 - h, S'_2)$). 
In particular, this implies that $j >  h+1$. 

Using Lemma~\ref{lem:f-duality}, we conclude that Lemma~\ref{lem:remote-pieces-equal} is a consequence of the following statement. 

\begin{lemma}
\label{lem:remote-pieces-cardinality}
Suppose $0 \leq h < j \leq a_2$, and $v_j \in S_2, \,\, v_{h+1} \in \mathcal{D}_2 - S_2$.
Then $\sh(S_2)_{h;j} \neq \emptyset$ if and only if we have 
\begin{equation}
\label{eq:criterion-piece-nonempty}
\text{$f(h,k) < 0 < f(k,j)$ whenever $h < k < j$ }\ .
\end{equation}
Furthermore, if \eqref{eq:criterion-piece-nonempty} is satisfied then 
\begin{equation}
\label{eq:piece-cardinality}
|\sh(S_2)_{h;j}|  = \displaystyle{\min_{h < k < j} \min(f(k,j), -f(h,k))}\ .
\end{equation}
\end{lemma}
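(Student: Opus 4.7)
The plan is to reduce the combinatorial description of $\sh(S_2)_{h;j}$ to an interval-counting problem in a single integer parameter. Each horizontal edge $u$ of height $h$ has a left endpoint of the form $E=(e_h+M,h)$ for a unique $M\in\{0,1,\dots,N-1\}$, where $N=e_{h+1}-e_h$ is the width of the horizontal stretch at height $h$ and $e_k$ denotes the $x$-coordinate of $F_k$. Writing $\psi(k)=f(h,k)$, the additive property~\eqref{eq:f-additive} gives $f(E,j')=\psi(j')+M$ and $f(k,j')=\psi(j')-\psi(k)$. The first step is to characterize, for each $v_{j'}\in S_2$ with $j'>h$, when $u\in \SH(v_{j'};S_2)$: by Definition~\ref{df:shadow} together with Lemma~\ref{lem:compatibility and inequality}, this amounts to $f(\cdot,j')$ being nonnegative at $E$ and strictly positive at every interior lattice point of $EF_{j'}$. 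Since $f(\cdot,j')$ increases by one at each horizontal step, the interior minima are attained among the values $f(F_k,j')$ for $h<k<j'$, so the condition becomes $M\ge -\psi(j')$ together with $\psi(j')>\psi(k)$ for all $h<k<j'$. I will call such an index $j'$ \emph{good}.

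Feeding this back into Definition~\ref{def:remote-pieces}, $u\in\sh(S_2)_{h;j}$ if and only if $j$ is good and $M\ge -\psi(j)$, while for every good index $j'\in(h,j)$ one has $M<-\psi(j')$. For the necessity of~\eqref{eq:criterion-piece-nonempty}, goodness of $j$ already yields $f(k,j)>0$ for $h<k<j$, and to obtain $f(h,k)=\psi(k)<0$ on the same range I would argue by contradiction. Let $k_0$ be the smallest index in $(h,j)$ with $\psi(k_0)\ge 0$. Since $\psi$ can only increase by a vertical jump at an edge of $S_2$, necessarily $v_{k_0}\in S_2$; minimality of $k_0$ combined with $\psi(h+1)=-N<0$ (using the hypothesis $v_{h+1}\notin S_2$) forces $\psi(k_0)>\psi(k)$ for $h<k<k_0$, so $k_0$ is good, and then $M\ge 0\ge -\psi(k_0)$ violates the membership condition.

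For the sufficiency of~\eqref{eq:criterion-piece-nonempty} and the cardinality formula, set $P=\max_{h<k<j}\psi(k)$ and let $k^*$ be the smallest index attaining $P$. The central claim will be
\[
\min\bigl(N,\ \min\{-\psi(j')\colon j'\text{ good in }(h,j)\}\bigr)=-P,
\]
where the inner minimum is interpreted as $+\infty$ when no good $j'$ exists. If $k^*>h+1$, the strict inequality $\psi(k^*)>\psi(k^*-1)$ forces $v_{k^*}\in S_2$, and then minimality makes $k^*$ itself good, so $-\psi(k^*)=-P\le N$. If instead $k^*=h+1$, the hypothesis $v_{h+1}\notin S_2$ excludes $h+1$ from the good set and prevents any good $j'\in(h,j)$ from matching $P$, while $N=-\psi(h+1)=-P$ supplies the same bound. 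Combined with the lower bound $M\ge -\psi(j)$, the valid $M$ form the integer interval $\bigl[\max(0,-\psi(j)),\,-P\bigr)$, whose length is $-P+\min(\psi(j),0)=\min(\psi(j)-P,\,-P)$; commuting the two minima rewrites this as $\min_{h<k<j}\min(\psi(j)-\psi(k),\,-\psi(k))=\min_{h<k<j}\min(f(k,j),\,-f(h,k))$.

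The hard part will be the displayed identity: one must simultaneously track where the maximum $P$ of $\psi$ on $(h,j)$ is attained, whether the corresponding vertical edge lies in $S_2$, and how the hypothesis $v_{h+1}\notin S_2$ conspires with the identity $\psi(h+1)=-N$ to reconcile the two a priori distinct sources of the upper bound on $M$. Once that identity is in place, the cardinality count is a routine short calculation.
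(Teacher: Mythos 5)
Your proof is correct and follows essentially the same route as the paper's: the same local-shadow membership criterion derived from Lemma~\ref{lem:compatibility and inequality}, the same ``smallest violating index'' argument for necessity, and the same interval count for the cardinality. Your parametrization by the offset $M$ and the maximum $P$ of $\psi(k)=f(h,k)$ is just a change of coordinates from the paper's inequalities on $|(EF_j)_1|$ and its set $L$ of candidate indices, with your ``central claim'' corresponding to the paper's identity $\min_{\ell\in L}f(\ell,j)=\min_{h<k<j}f(k,j)$.
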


\begin{proof}
We start with the following observation. 
Let $u \in \mathcal{D}_1$ be a horizontal edge of height $h$ with the left endpoint $E$. 
As an easy consequence of Lemma~\ref{lem:compatibility and inequality}, $u$ belongs to the local shadow $\SH(v_j; S_2)$ if and only if 
we have 
\begin{equation}
\label{eq:criterion-local-shadow}
\text{$|(E F_j)_1| \leq b|(F_h F_j)_2\cap S_2|$, and  $0 < f(k,j)$ whenever $h < k < j$.}
\end{equation}
In particular, the last condition in \eqref{eq:criterion-local-shadow} is \emph{necessary} for $\sh(S_2)_{h;j} \neq \emptyset$.

Next we show that if $f(h,k) \geq 0$ for some $k$ with $h < k < j$ then $\sh(S_2)_{h;j} = \emptyset$. 
Indeed, let $\ell$ be the smallest integer such that $h < \ell < j$, and $f(h, \ell) \geq 0$. 
If $\ell = h+1$ then $\mathcal{D}_1$ has no edges of height $h$, so the equality $\sh(S_2)_{h;j} = \emptyset$ is trivial. 
Thus, we assume that $\ell > h+1$. 
By the choice of $\ell$, for every $k$ such that $h < k < \ell$, we have $f(h,k) < 0$.
The additive property  \eqref{eq:f-additive} then implies that $f(k,\ell) > 0$. 
In particular, we have $f(\ell -1,\ell) > 0$, implying that $v_\ell \in S_2$. 
Now we see that \eqref{eq:criterion-local-shadow} must hold if we replace $j$ with $\ell$, and $E$ with $F_h$. 
But then, as we just proved, every horizontal edge of height $h$ in $\mathcal{D}_1$ must belong to $\SH(v_\ell; S_2)$, implying
that $\sh(S_2)_{h;j} = \emptyset$.

We have shown that the conditions \eqref{eq:criterion-piece-nonempty} are necessary for $\sh(S_2)_{h;j} \neq \emptyset$. 
The fact that they are sufficient follows at once from \eqref{eq:piece-cardinality}.
So we assume that \eqref{eq:criterion-piece-nonempty} is satisfied, and focus on the proof of \eqref{eq:piece-cardinality}. 

Remembering Definition~\ref{df:remote-shadow} and using the criterion \eqref{eq:criterion-local-shadow}, we conclude that 
a lattice point $E \in \mathcal{D}$ is the left endpoint of a horizontal edge that belongs to $\sh(S_2)_{h;j}$ if and only if
it satisfies the following inequalities:
$$\displaystyle{\max_{\ell \in L}}  (b |(F_h F_\ell)_2 \cap S_2| + |(F_\ell F_j)_1|) < |(EF_j)_1| \leq 
\min (b|(F_h F_j)_2\cap S_2|, |(F_h F_j)_1|) \ ,$$
where 
$$L = \{h+1\} \cup \{\ell: h < \ell < j, \,\, v_\ell \in S_2, \,\, f(k,\ell) > 0 \,\, {\rm for} \,\, h < k < \ell\} \ .$$ 
Therefore, we have
\begin{align*}
|\sh(S_2)_{h;j}| &= [\min (b|(F_h F_j)_2\cap S_2|, |(F_h F_j)_1|) - \displaystyle{\max_{\ell \in L}}  (b |(F_h F_\ell)_2 \cap S_2| + |(F_\ell F_j)_1|)]_+\\
& = [\displaystyle{\min_{\ell \in L}} \min(f(\ell,j), -f(h,\ell))]_+ = \displaystyle{\min_{\ell \in L}} \min(f(\ell,j), -f(h,\ell))
\end{align*}
(the last equality is due to \eqref{eq:criterion-piece-nonempty}). 
It remains to show that this expression for $|\sh(S_2)_{h;j}|$ agrees with \eqref{eq:piece-cardinality}. 
By the additive property  \eqref{eq:f-additive}, we have $-f(h,\ell) = f(\ell,j) - f(h,j)$, implying that
$$\displaystyle{\min_{\ell \in L}} \min(f(\ell,j), -f(h,\ell)) = \displaystyle{\min_{\ell \in L}} (f(\ell,j)) - [f(h,j)]_+ \ ;$$ 
Thus it suffices to show the following:
$$\displaystyle{\min_{\ell \in L}} (f(\ell,j)) = \displaystyle{\min_{h < k < j}} (f(k,j)) \ .$$
Let $\ell$ be the smallest value of $k$ that attains the minimum $\min_{h < k < j} (f(k,j))$. 
An argument parallel to the one used in the second paragraph of the proof then shows that $\ell \in L$, finishing the proofs of Lemma~\ref{lem:remote-pieces-cardinality} and Lemma~\ref{lem:remote-pieces-equal}. 
 \end{proof}
 
We have already noted that Lemma~\ref{lem:remote-pieces-equal} makes a bijection 
$\theta: \sh(S_2) \to \sh(S'_2)$ well-defined via \eqref{eq:def-theta}. 
To finish the proof of Lemma~\ref{lemma:T} it suffices to prove the following.

\begin{lemma}
\label{prop:theta}
Let $S_2$ be a subset of $\mathcal{D}_2$, and $S'_2$ be given by \eqref{S'2}.
Suppose a subset $S_1$ of $\sh(S_2)$ is such that $(S_1,S_2)$ is not compatible.
Then $(\theta(S_1), S'_2)$ is also not compatible. 
\end{lemma}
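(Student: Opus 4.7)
The plan is to produce, from any pair $(u,v) \in S_1 \times S_2$ witnessing incompatibility of $(S_1,S_2)$, an explicit pair $(u',v') \in \theta(S_1) \times S'_2$ witnessing incompatibility of $(\theta(S_1),S'_2)$. Since $S_1 \subseteq \sh(S_2)$, we have $u \in \sh(S_2)_{h;j}$ for a unique pair $(h,j)$ with $v_j \in S_2$, $v_{h+1} \notin S_2$, and $j \geq h+2$. The canonical image will be $u' := \theta(u) \in \sh(S'_2)_{a_2-j;\,a_2-h}$, and a natural candidate for $v'$ is determined from $v$ via the height swap $m \mapsto a_2-m$ underlying Lemma~\ref{lem:f-duality}.

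First, I would translate the incompatibility of $(u,v)$ into a purely numerical statement. Writing $E$ for the left endpoint of $u$ and $F$ for the upper endpoint of $v$, the compatibility of $(u,v)$ at a lattice point $A \in EF^\circ$ is equivalent to either $f(\mathrm{ht}(A), \mathrm{ht}(F); S_2) = 0$ (condition (I) of \eqref{0407df:comp}) or $c|(EA)_1 \cap S_1| - |(EA)_2| = 0$ (condition (II)). Incompatibility then asserts a definite nonzero sign for each of these two quantities at every lattice point of $EF^\circ$, and the characterisation of $\sh(S_2)_{h;j}$ in Lemma~\ref{lem:remote-pieces-cardinality}, combined with Lemma~\ref{lem:compatibility and inequality}, pins down the sign pattern on the $S_2$ side. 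In particular, I expect to reduce, after minor re-selection of witnesses, to the case where $v = v_j$, so that the obstruction localises to the canonical subpath $EF_j$.

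Next, I would transport these sign assertions to the primed side. Lemma~\ref{lem:f-duality} directly handles the $S_2$-part: $f(\cdot,\cdot;S_2)$ on $\mathcal{D}$ and $f(\cdot,\cdot;S'_2)$ on $\mathcal{D}'$ are negatives of each other after the height swap. For the $S_1$-part, I would exploit the piece-by-piece, order-preserving construction of $\theta$ in \eqref{eq:def-theta}: the position of $u$ within $\sh(S_2)_{h;j}$ equals the position of $u'$ within $\sh(S'_2)_{a_2-j;\,a_2-h}$, so by matching the cumulative counts of $S_1$-edges and $\theta(S_1)$-edges along corresponding sub-intervals I would obtain the dual sign assertions on the primed side. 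Taking $v'$ to be the height-swapped image of $v$, the pair $(u',v')$ then witnesses the incompatibility of $(\theta(S_1),S'_2)$.

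The main obstacle is the $S_1$-part. Lemma~\ref{lem:f-duality} supplies a clean duality for $S_2$, which is completely determined by the defining rule \eqref{S'2}, but no analogous intrinsic duality is available for $S_1$; the map $\theta$ has been stitched together piece by piece, and one must verify that this stitching preserves the exact numerical inequalities governing the failure of \eqref{0407df:comp}. Edges $u$ sitting at the boundary of a piece, or cases where the witness $v$ lies far from $v_j$ (so that the subpath $EF$ spans several pieces of $\sh(S_2)$), are likely to require the most delicate case analysis. Once that analysis is carried out, one obtains the desired incompatibility on the primed side and the lemma follows.
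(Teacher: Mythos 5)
There is a genuine gap, and it sits exactly where you locate ``the main obstacle'': the numerical failure of \eqref{0407df:comp} does \emph{not} transport to the pair $(\theta(u),v')$ you propose. Two problems. First, your candidate $v'$: if $v=v_j\in S_2$, the height swap underlying \eqref{S'2} sends $v_j$ to $v'_{a_2+1-j}$, which by construction lies in the \emph{complement} of $S'_2$, so it cannot serve as a witness; the correct vertical edge on the primed side is $v'_{a_2-h}$ (it lies in $S'_2$ precisely because $v_{h+1}\notin S_2$), i.e.\ it is determined by the height $h$ of $u$, not by $j$. Second, and more seriously, the incompatibility of $(u,v_j)$ with $u\in\sh(S_2)_{h;j}$ is equivalent to the ``prefix'' system \eqref{eq:failure-second-case-thru-shadows}, $\sum_{h'<k}g(h',j')>k-h$ for $h<k<j$, where $g(h',j')=c|\sh(S_2)_{h';j'}\cap S_1|$. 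After applying \eqref{eq:def-theta} and the index swap, incompatibility of an edge of $\theta(S_1)$ in the piece $\sh(S'_2)_{a_2-j;\,a_2-h}$ against $v'_{a_2-h}$ translates back into the ``suffix'' system $\sum_{j'>k}g(h',j')>j-k$ for $h<k<j$. These two systems coincide only at the extreme index (both reduce to the total $\sum g(h',j')>j-h-1$) and are otherwise inequivalent: when the $S_1$-mass is concentrated in pieces with small $j'$, the prefix inequalities hold while the suffix ones fail. So ``matching cumulative counts along corresponding sub-intervals'' --- which is just the order-preserving property of $\theta$ within each piece --- does not yield the dual sign assertions, and your chosen pair need not be incompatible.

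The missing idea is the claim \eqref{eq:failure-dual} in the paper's proof: from the single total inequality one extracts a subinterval $[h,\ell]$, with $\ell$ possibly strictly smaller than $j$, on which \emph{all} the dual suffix inequalities hold. This is proved by contradiction, building a descending sequence $\ell_0>\ell_1>\cdots>\ell_r$ of violations whose telescoping sum contradicts the total. The witness of incompatibility on the primed side then lies in $\sh(S'_2)_{a_2-\ell;\,a_2-h}$ paired with $v'_{a_2-h}$; in general it is neither $\theta(u)$ nor anything obtained from $(u,v)$ by a pointwise height swap. Without this extraction step the argument does not close.
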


\begin{proof}
By the definition, there exist $u \in S_1$ and $v \in S_2$ not satisfying \eqref{0407df:comp}. 
Looking at the first case in \eqref{0407df:comp}, we may assume without loss of generality that $v = v_j$, and $u \in \sh(S_2)_{h;j}$ for some index~$h$. 
As in the proof of Lemma~\ref{lem:remote-pieces-equal}, it is enough to treat the case where $0 \leq h < j \leq a_2$. 
Then the failure of the second case in \eqref{0407df:comp} can be expressed as follows: for every $k$ such that $h < k < j$, we have
\begin{equation}
\label{eq:failure-second-case}
c |(E F_k)_1 \cap S_1| > k - h \ ,
\end{equation} 
where $E$ is the left endpoint of $u$ (this follows by the same argument as in the proof of Lemma~\ref{lem:compatibility and inequality}). 
Clearly, we can assume that $u$ is the leftmost edge in $\sh(S_2)_{h;j}$ (this makes \eqref{eq:failure-second-case} only easier to satisfy). 
In view of Lemma~\ref{lem:disjoint_contain}, we see that \eqref{eq:failure-second-case} is equivalent to the following system of inequalities:
\begin{equation}
\label{eq:failure-second-case-thru-shadows}
\displaystyle{\sum_{(h',j'): h' < k}} g(h',j') > k - h \quad (h < k < j) \ ,
\end{equation}
where we abbreviate $g(h',j') = c |\sh(S_2)_{h';j'} \cap S_1|$ (with the convention that all the indices run over the fixed interval $[h,j]$, and that $g(h',j') = 0$ 
unless $\sh(S_2)_{h';j'} \neq \emptyset$, so that in particular we must have $j' > h'+1$).    

In particular, setting $k = j-1$ specializes \eqref{eq:failure-second-case-thru-shadows} to
\begin{equation}
\label{eq:failure-second-case-thru-shadows-total}
\displaystyle{\sum_{(h',j')}} g(h',j') > j - h - 1  \ .
\end{equation}
We claim that \eqref{eq:failure-second-case-thru-shadows-total} implies the following property:
\begin{eqnarray}
\label{eq:failure-dual}
&\text{there exists an index $\ell > h+ 1$ such that, for every $k$}\\
\nonumber
&\text{with $h < k < \ell$, we have $\displaystyle{\sum_{(h',j'): k < j' \leq \ell}} g(h',j') > \ell - k$}
\end{eqnarray}
(recall that we are still using the convention that all indices belong to $[h,j]$). 
Assume for the sake of contradiction that \eqref{eq:failure-dual} does not hold, that is, for every $\ell > h+1$ there exists
an index $k$ such that $h < k < \ell$, and 
\begin{equation}
\label{eq:failure-dual-contradiction}
\displaystyle{\sum_{(h',j'): k < j' \leq \ell}} g(h',j') \leq \ell - k \ .
\end{equation}
First we use \eqref{eq:failure-dual-contradiction} for $\ell = \ell_0 = j$, and define $\ell_1$ as any of the possible values of $k$. 
If $\ell_1 > h+1$, then we use \eqref{eq:failure-dual-contradiction}  for $\ell = \ell_1$, and again define $\ell_2$ as any of the possible values of $k$.
We continue in the same way, generating the sequence $j-1 = \ell_0 > \ell_1 > \cdots > \ell_r$, that terminates at $\ell_r = h+1$.
Adding up all the inequalities  \eqref{eq:failure-dual-contradiction} used along the way, we get
$$\displaystyle{\sum_{(h',j')}} g(h',j') \leq \ell_0 - \ell_r = j - h -1 \, $$
in contradiction to \eqref{eq:failure-second-case-thru-shadows-total}.

Choose an index $\ell$ satisfying \eqref{eq:failure-dual}. 
Without loss of generality we can assume that $g(h, \ell) > 0$ (otherwise replace the interval $[h,\ell]$ with its maximal by inclusion 
subinterval $[h',j']$ such that $g(h',j') > 0$). 
Now recall the definition \eqref{eq:def-theta} of the map~$\theta$, which allows us to express $g(h',j')$ as
$$g(h',j') = c |\sh(S_2)_{h';j'} \cap S_1| = c |\sh(S'_2)_{a_2-j';a_2-h'} \cap \theta(S_1)| \ .$$
Substituting these expressions into \eqref{eq:failure-dual}, we see that this system of inequalities becomes identical to the system of the kind 
\eqref{eq:failure-second-case-thru-shadows} with the quadruple $(h,j,S_1, S_2)$ replaced by $(a_2 - \ell, a_2 - h, \theta(S_1), S'_2)$. 
It follows that $(\theta(S_1), S'_2)$ is not compatible, finishing the proof of Lemma~\ref{prop:theta}. 
As we have seen, this also completes the proofs of Lemma~\ref{lemma:T} and of the last case in the proof of \eqref{eq:cpq-dyck-symmetry}.
\end{proof}

\section{Upper bounds for supports of greedy elements}
\label{Upper bounds for supports}

Recall that the \emph{support}  of a Laurent polynomial $x \in \ZZ[x_1^{\pm 1}, x_2^{\pm 1}]$ is the set of lattice points $(d_1,d_2) \in \ZZ^2$ such that $x_1^{d_1} x_2^{d_2}$ appears with non-zero coefficient in the Laurent expansion of~$x$. 
In this section we obtain upper bounds for the supports of all greedy elements $x[a_1,a_2]$. 
Since $x[a_1,a_2]$ is pointed at $(a_1, a_2) \in \ZZ^2$ (see Definition~\ref{df:pointed}), i.e., has the expansion
\eqref{eq:pointed-expansion}, we find it more convenient to work with the set $\{(p,q) \in \ZZ_{\geq 0}^2 : c(p,q) \neq 0\}$. 
We refer to this set as the \emph{pointed support} of $x[a_1,a_2]$ and denote it by $PS[a_1,a_2]$; thus $PS[a_1,a_2]$ is the support of the polynomial 
$X[a_1,a_2]  \in \ZZ[X_1, X_2]$ such that $x[a_1,a_2](x_1,x_2) = x_1^{-a_1} x_2^{-a_2} X[a_1,a_2](x_1^b, x_2^c)$. 
Knowing the pointed support we recover the ordinary support as follows:
\begin{align}
\label{eq:pointed-ordinary-support}
&\text{The support of $x[a_1,a_2]$ is the image of its pointed support}\\
\nonumber
 &\text{under the affine map $(p,q) \mapsto (-a_1 + bp, -a_2 + cq)$.}
\end{align}  
The following proposition provides an upper bound for $PS[a_1,a_2]$.  
It involves six cases covering all $(a_1,a_2) \in \ZZ^2$.

\begin{proposition}
\label{prop:support}
\begin{enumerate}
\item If $a_1 \leq 0$ and $a_2 \leq 0$ then $PS[a_1,a_2] = \{(0,0)\}$.

\item If $a_1 \leq 0 < a_2$ then $PS[a_1,a_2] = \{(p,0): 0 \leq p \leq a_2\}$, that is, 
$PS[a_1,a_2]$ is the set of lattice points in the closed segment with vertices $(0,0)$ and $(a_2,0)$. 

\item If $a_2 \leq 0 < a_1$ then $PS[a_1,a_2] = \{(0,q): 0 \leq q \leq a_1\}$, that is, 
$PS[a_1,a_2]$ is the set of lattice points in the closed segment with vertices $(0,0)$ and $(0,a_1)$. 

\item If $a_1 \geq ba_2 > 0$ then $PS[a_1,a_2]$ is contained in the set of lattice points in the closed trapezoid 
with vertices $(0,0)$, $(a_2,0)$, $(a_2, a_1 - ba_2)$, and $(0,a_1)$. 

\item If $a_2 \geq ca_1 > 0$ then $PS[a_1,a_2]$ is contained in the set of lattice points in the closed trapezoid 
with vertices $(0,0)$, $(a_2,0)$, $(a_2 - ca_1, a_1)$, and $(0,a_1)$. 

\item If $0 < a_1 < ba_2$, and $0 < a_2 < ca_1$ then $PS[a_1,a_2]$ is contained in the set of lattice points in the region bounded by the broken line 
$$(0,0), \,\,(a_2,0), \,\,(a_1/b, a_2/c), \,\, (0,a_1), \,\, (0,0),$$ 
with the convention that this region includes the closed segments $[(0,0), (a_2,0)]$ and $[(0,a_1),(0,0)]$ but excludes the rest of the boundary.  
\end{enumerate}
\end{proposition}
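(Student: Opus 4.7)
For cases (1)-(3), the Dyck path $\mathcal{D}^{[a_1]_+ \times [a_2]_+}$ degenerates to either a single point or a single segment, so the compatibility condition in~\eqref{eq:greedy-Dyck-expression} is vacuous: one reads off $PS[a_1,a_2]$ directly from the combinatorial formula --- only $(\emptyset,\emptyset)$ is available in~(1); every $S_2 \subseteq \mathcal{D}_2$ paired with $S_1 = \emptyset$ contributes in~(2); and symmetrically in~(3).

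For cases (4) and (5), the plan is to invoke the shadow construction used in Case~2 of the proof of~\eqref{eq:cpq-dyck-symmetry}. When $a_1 \geq ba_2 > 0$, the hypothesis of Lemma~\ref{lem:shadow-case2} is satisfied with $a_1$ in place of $a'_1$, so every compatible pair $(S_1,S_2)$ satisfies $|S_1| \leq a_1 - b|S_2|$; combined with $0 \leq |S_2| \leq a_2$ this gives the stated trapezoid, and case~(5) follows by the $(a_1,b) \leftrightarrow (a_2,c)$ symmetry.

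The crux is case~(6), where both $a_1, a_2 > 0$ and the sharpened recurrence~\eqref{eq:c-recurrence2} of Proposition~\ref{pr:B-introduction-linear} is available. Fix $q \geq 0$ and set $G_q(t) = \sum_{p \geq 0} c(p,q)\, t^p$. The Case~A branch of~\eqref{eq:c-recurrence2} holds for all $p \geq p_0 := \lceil ca_1 q/(ba_2)\rceil$ (with the exception at $(p,q) = (0,0)$), and rewrites as the convolution identity
\begin{equation*}
\sum_{k=0}^{p} (-1)^k\, c(p-k,q)\, \binom{a_2 - cq + k - 1}{k} = 0.
\end{equation*}
Using the formal identity $\sum_{k\geq 0}(-1)^k \binom{a_2-cq+k-1}{k}\, t^k = (1+t)^{-(a_2-cq)}$, this means $G_q(t)(1+t)^{-(a_2-cq)}$ is a polynomial of degree less than $\max(p_0, 1)$, so $G_q(t)$ itself is a polynomial of degree at most $(p_0 - 1)_+ + (a_2 - cq)_+$. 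When $q \geq 1$, multiplying this bound by $ba_2$ and carefully unwinding the ceiling gives the strict inequality $ba_2 p + c(ba_2 - a_1)q < ba_2^2$; by the analogous argument applied to the Case~B branch, for $p \geq 1$ one obtains the dual strict inequality $ca_1 q + bp(ca_1 - a_2) < ca_1^2$. Since every $(p,q) \in PS[a_1,a_2]$ with $pq > 0$ lies in either the Case~A or Case~B regime, $PS[a_1,a_2]$ is contained in the union of the two open triangles; the axis segments are handled by the $q = 0$ and $p = 0$ analyses, which yield the non-strict bounds $0 \leq p \leq a_2$ and $0 \leq q \leq a_1$ (matching the closed-axis convention).

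The main obstacle I anticipate is the ceiling-function bookkeeping in case~(6): one must verify that the degree bound, after clearing denominators, yields a \emph{strict} inequality on the sloped boundary whenever $q \geq 1$ (so that the sloped boundary is correctly excluded from $PS[a_1,a_2]$), and confirm that the two open triangles from Cases~A and~B, together with the two closed axis segments, fit together to exactly match the stated pinched region --- in particular, that no lattice point on the diagonal through the origin and $(a_1/b, a_2/c)$ beyond the reflex vertex sneaks into the support.
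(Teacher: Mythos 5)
Your cases (1)--(5) are fine and essentially agree with the paper: the paper does (4) via the explicit factorization \eqref{eq:X-case4-explicit} and a Minkowski-sum computation, but that is just the algebraic shadow of Lemma~\ref{lem:shadow-case2}, which you invoke directly. The problem is case (6), where your argument is circular relative to the paper's logical structure. You derive the support bound from the linear recurrence \eqref{eq:c-recurrence2}, i.e.\ from Proposition~\ref{pr:B-introduction-linear}, equivalently from \eqref{eq:cpq-dyck-recurrences}. But Proposition~\ref{prop:support} is proved in Section~\ref{Upper bounds for supports} precisely because it is the key input to the proof of \eqref{eq:cpq-dyck-recurrences} in Section~\ref{2nd technical statement}: all three subcases of Case~2 there quote parts (2), (5), (6) of Proposition~\ref{prop:support} to show the relevant coefficient $c'(p',q)$ vanishes, and Proposition~\ref{pr:B-introduction-linear} is itself deduced afterwards from \eqref{eq:cpq-dyck-recurrences}. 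At the stage where Proposition~\ref{prop:support} must be proved, the only available description of $c(p,q)$ is the combinatorial one \eqref{eq:cpq-dyck}, and the paper's proof of case (6) works entirely with it: it reduces the claim to three statements \eqref{claim1}--\eqref{claim3} about compatible pairs and proves them using the shadow machinery of Section~\ref{1st technical statement} (Lemmas~\ref{lemma:N}, \ref{lem:compatibility and inequality}, \ref{lem:disjoint_contain}), the slope estimate \eqref{eq:slope}, and a chain construction establishing $c\,|(EF)_1\cap S_1|<|(EF)_2|$. You would need an argument of this combinatorial kind, not one routed through the recurrence.

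Even read as a standalone argument about the recurrence-defined greedy coefficients, your case (6) has gaps. The definition is the max-recurrence \eqref{eq:c-recurrence}, not \eqref{eq:c-recurrence2}; their equivalence is again a downstream consequence, and the max-recurrence only gives one-sided inequalities, which kill the convolution identity your generating-function step needs. The step also presupposes that $G_q(t)$ is a polynomial, i.e.\ that only finitely many $c(p,q)$ are nonzero --- exactly the finiteness issue flagged in Remark~\ref{rem:greedy}(a) that the support bound is meant to help settle. Finally, in the regime $cq>a_2$ (which contains precisely the lattice points ``beyond the reflex vertex'' that you rightly worry about), knowing that $G_q(t)(1+t)^{cq-a_2}=P(t)$ with $\deg P<\max(p_0,1)$ does not bound $\deg G_q$ unless you also show $(1+t)^{cq-a_2}$ divides $P(t)$, which you do not address.
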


The following figure illustrates cases (4) - (6) in Proposition~\ref{prop:support}. 

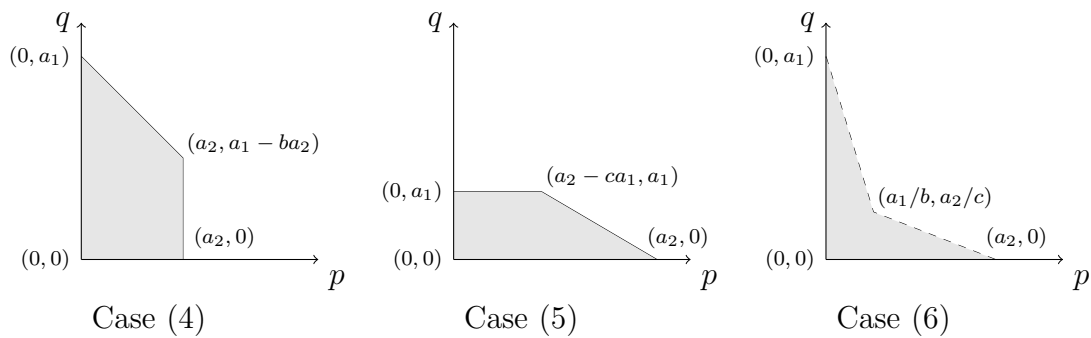
\begin{figure}[h]
\begin{tikzpicture}[scale=.9]
\usetikzlibrary{patterns}
\draw (0,3)--(1.5,1.5)--(1.5,0);
\fill [black!10] (0,3)--(1.5,1.5)--(1.5,0)--(0,0)--(0,3);
\draw (0,0) node[anchor=east] {\tiny$(0,0)$};
\draw (1.5,0) node[anchor=south west] {\tiny$(a_2,0)$};
\draw (1.4,1.4) node[anchor= south west] {\tiny$(a_2,a_1-ba_2)$};
\draw (0,3) node[anchor=east] {\tiny$(0,a_1)$};
\draw[->] (0,0) -- (3.5,0)
node[below right] {$p$};
\draw[->] (0,0) -- (0,3.5)
node[left] {$q$};
\draw (1,-.5) node[anchor=north] {Case (4)};
\begin{scope}[shift={(5.5,0)}]
\usetikzlibrary{patterns}
\draw (3,0)--(1.3,1)--(0,1);
\fill [black!10] (3,0)--(1.3,1)--(0,1)--(0,0)--(3,0);
\draw (0,0) node[anchor=east] {\tiny$(0,0)$};
\draw (2.7,0) node[anchor=south west] {\tiny$(a_2,0)$};
\draw (1.2,.9) node[anchor=south west] {\tiny$(a_2-ca_1,a_1)$};
\draw (0,1) node[anchor=east] {\tiny$(0,a_1)$};
\draw[->] (0,0) -- (3.5,0)
node[below right] {$p$};
\draw[->] (0,0) -- (0,3.5)
node[left] {$q$};
\draw (1,-.5) node[anchor=north] {Case (5)};
\end{scope}
\begin{scope}[shift={(11,0)}]
\usetikzlibrary{patterns}
\draw[dashed] (0,3)--(.7,.7)--(2.5,0);
\fill [black!10]  (0,3)--(.7,.7)--(2.5,0)--(0,0)--(0,3);
\draw (0,0) node[anchor=east] {\tiny$(0,0)$};
\draw (2.2,0) node[anchor=south west] {\tiny$(a_2,0)$};
\draw (.6,.6) node[anchor=south west] {\tiny$(a_1/b,a_2/c)$};
\draw (0,3) node[anchor=east] {\tiny$(0,a_1)$};
\draw[->] (0,0) -- (3.5,0)
node[below right] {$p$};
\draw[->] (0,0) -- (0,3.5)
node[left] {$q$};
\draw (1,-.5) node[anchor=north] {Case (6)};
\end{scope}
\end{tikzpicture}
\caption{Cases (4)-(6) of Proposition 4.1.}
\label{fig:Proposition 4.1}
\end{figure}

Note that in the last case the polygonal region in question does not have to be convex, and the vertex $(a_1/b, a_2/c)$ is not necessarily a lattice point.

\medskip

Before proving Proposition~\ref{prop:support}, we note that it has the following useful corollary.

\begin{corollary}
\label{cor:out-of-1st-quadrant}
If at least one of $a_1$ and $a_2$ is positive then the support of $x[a_1,a_2]$ has empty intersection with the positive quadrant $\ZZ_{\geq 0}^2$.
\end{corollary}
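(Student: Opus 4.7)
The plan is to translate the corollary into a condition on the pointed support and then check it case by case in Proposition~\ref{prop:support}. By \eqref{eq:pointed-ordinary-support}, the support of $x[a_1,a_2]$ meets $\ZZ_{\geq 0}^2$ if and only if there exists $(p,q) \in PS[a_1,a_2]$ with $bp \geq a_1$ and $cq \geq a_2$. Under the hypothesis that at least one of $a_1, a_2$ is positive, case (1) of Proposition~\ref{prop:support} is ruled out, and I would show that in each of the remaining five cases no such pair $(p,q)$ can exist.

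Cases (2) and (3) are immediate: if, for instance, $a_1 \leq 0 < a_2$, then every $(p,q) \in PS[a_1,a_2]$ has $q = 0$, so $cq = 0 < a_2$. Case (4) uses the horizontal bound $p \leq a_2$ coming from the containing trapezoid: this gives $bp \leq ba_2 \leq a_1$, with equality forcing $p = a_2$ and $a_1 = ba_2$; but at $p = a_2$ the trapezoid collapses to $q = 0$, so $cq = 0 < a_2$. Case (5) follows by the obvious symmetry.

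The only case that needs a moment of thought is (6). Here the upper boundary of the containing region is the broken line from $(a_2,0)$ through the kink $(a_1/b, a_2/c)$ to $(0,a_1)$. The key observation is that the kink is the unique point of the closed region satisfying both $p \geq a_1/b$ and $q \geq a_2/c$: any other point in the closed region with $p \geq a_1/b$ lies strictly below the slanted segment from $(a_2,0)$ to $(a_1/b, a_2/c)$, so $cq < a_2$; symmetrically for $q \geq a_2/c$. Because Proposition~\ref{prop:support}(6) explicitly excludes the slanted portions of the boundary (and hence the kink itself) from the region, no lattice point of $PS[a_1,a_2]$ in this case can satisfy both $bp \geq a_1$ and $cq \geq a_2$.

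The main (and rather mild) obstacle is bookkeeping the strict versus non-strict inequalities at the kink vertex $(a_1/b, a_2/c)$ in case (6); this is precisely what the exclusion convention in Proposition~\ref{prop:support}(6) is designed to handle, so once that convention is invoked the argument reduces to unwinding definitions and comparing the bounding polygons with the half-planes $\{bp \geq a_1\}$ and $\{cq \geq a_2\}$.
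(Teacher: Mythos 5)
Your proof is correct and is essentially the paper's own argument: the paper dismisses the corollary in one line ("by inspection after applying the affine transformation \eqref{eq:pointed-ordinary-support} to the regions in cases (2)--(6) of Proposition~\ref{prop:support}"), and your case-by-case check --- including the degenerate-edge observation in cases (4)--(5) and the boundary-exclusion convention at the kink $(a_1/b,a_2/c)$ in case (6) --- is precisely the inspection being invoked.
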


This follows by inspection after applying the affine transformation \eqref{eq:pointed-ordinary-support} to the regions in cases~(2) - (6) in 
Proposition~\ref{prop:support}. 

\begin{proof}[Proof of Proposition~\ref{prop:support}]
Case (1) is trivial: we have $x[a_1,a_2] = x_1^{-a_1} x_2^{-a_2}$, hence $X[a_1,a_2] = 1$.

In Case (2) we have $x[a_1,a_2] = x_1^{-a_1} x_0^{a_2} =  x_1^{-a_1} ((x_1^b + 1)/x_2)^{a_2}$, hence 
$X[a_1,a_2] = (X_1 + 1)^{a_2}$, implying the desired statement. 
Case (3) follows from Case (2) by obvious symmetry. 

Now suppose that $(a_1,a_2)$ is as in Case (4). 
Remembering Case~2 in Section~\ref{1st technical statement}, we note that $x[a_1,a_2]$ is given by the right side of 
\eqref{eq:sigma-2-explicit} with $a'_1$ replaced by $a_1$.  
It follows that
\begin{equation}
\label{eq:X-case4-explicit}
X[a_1, a_2] =  (X_2 + 1)^{a_1 - ba_2} ((X_2 + 1)^b + X_1)^{a_2} \ .
\end{equation}
Therefore, the Newton polygon of $X[a_1, a_2]$ (that is, the convex hull of $PS[a_1,a_2]$) is the Minkowski sum 
of the segment $[(0,0), (0, a_1 - ba_2)]$ and the triangle with vertices $(0,0), (0, ba_2), (a_2, 0)$. 
By inspection, this Minkowski sum is exactly the trapezoid described in (4), finishing the proof in this case. 
Case (5) follows from Case (4) by obvious symmetry. 

Our proof of Case (6) is more involved than the previous ones. 
We use the description of the coefficients $c(p,q)$ given by \eqref{eq:cpq-dyck}. 
Thus $PS[a_1,a_2]$ is the set of pairs $(p, q)$ such that there is a compatible pair $(S_1,S_2)$ in $\mathcal{D}^{a_1 \times a_2}$ 
with $|S_1|=q$ and $|S_2|=p$.
This implies in particular that a lattice point $(p,0)$ belongs to $PS[a_1,a_2]$ if and only if $0 \leq p \leq a_2$; and similarly,
a lattice point $(0,q)$ belongs to $PS[a_1,a_2]$ if and only if $0 \leq q \leq a_1$.
To prove the rest of part (6) it is enough to show that every compatible pair $(S_1, S_2)$ satisfies the following three claims: 
\begin{align}
\label{claim1}
&\text{If $0 < b|S_2| < a_1$,  then the lattice point $(|S_2|,|S_1|)$}\\ 
\nonumber
&\text{lies strictly below the segment $[(0,a_1), (a_1/b, a_2/c)]$.}\\
\label{claim2}
&\text{If $0 < c|S_1| < a_2$,  then the lattice point $(|S_2|,|S_1|)$}\\ 
\nonumber
&\text{lies strictly to the left of the segment $[(a_2,0), (a_1/b, a_2/c)]$.}\\
\label{claim3}
&\text{The case where $b|S_2| \geq a_1$, and $c|S_1| \geq a_2$ is impossible.}
\end{align}

\noindent \emph{Proof of \eqref{claim1}:} by a simple calculation, the condition that $(|S_2|,|S_1|)$ 
lies strictly below the segment $[(0,a_1), (a_1/b, a_2/c)]$ is equivalent to the following: 
\begin{equation}
\label{eq:claim1-restate}
|S_1| < a_1 - b|S_2| + \frac{ba_2|S_2|}{ca_1} \ .
\end{equation} 
Recalling Lemma~\ref{lemma:N}, we note that in our case $|\mathcal{D}_1 - \SH(S_2)| =  a_1 - b|S_2|$, hence
\eqref{eq:claim1-restate}  is equivalent to the following: 
\begin{equation}
\label{eq:claim1-restate-2}
\text{If $S_1 \subseteq \sh(S_2)$, and $(S_1,S_2)$ is compatible then $ca_1|S_1| < ba_2|S_2|$.}
\end{equation} 

We start the proof of \eqref{eq:claim1-restate-2} with an observation (to be used in a moment):
\begin{align}
\label{eq:slope}
& \text{If $E$ is the left endpoint of a horizontal edge in $\mathcal{D}^{a_1\times a_2}$, and $F$ is the}\\
\nonumber
& \text{upper endpoint of a vertical edge then
$a_1(|(EF)_2| -1) < a_2|(EF)_1|$.}
\end{align}
To see this, assume that $F$ is at height $j$ and $E$ is at height $h-1$. Then $|(EF)_1| \ge |(F_h F_j)_1|+1$
and $|(EF)_2|-1=|(F_hF_j)|=j-h$.  Thus using  Lemma~\ref{lem:vj}, we have $a_2|(EF)_1|\ge a_2(|F_hF_j|+1)=a_2(\lceil ja_1/a_2\rceil-\lceil ha_1/a_2\rceil+1)>a_2(ja_1/a_2-ha_1/a_2)=a_1(j-h)=a_1(|(EF)_2| -1)$.

Now suppose $S_1$ and $S_2$ are as in \eqref{eq:claim1-restate-2}. 
Recall from Lemma~\ref{lem:disjoint_contain} and \eqref{eq:local-shadows2} that under the assumption $b|S_2| < a_1$,
all local shadows $\SH(v;S_2)$ are proper subsets of $\mathcal{D}_1$, and the shadow $\SH(S_2)$ is the disjoint union of
maximal local shadows $\SH(v;S_2)$ (those not contained in another local shadow).
Let $\SH(v;S_2)$ be one of these maximal local shadows, and let $F$ be the upper endpoint of $v$, and $E$ the left endpoint of the leftmost edge 
in $\SH(v;S_2) \cap S_1$. 
To prove \eqref{eq:claim1-restate-2} it is enough to show that
\begin{equation}
\label{eq:claim1-restate-3}
ca_1|(EF)_1 \cap S_1| < ba_2|(EF)_2 \cap S_2| \ .
\end{equation} 
In view of Lemma~\ref{lem:compatibility and inequality}, we have $|(EF)_1| \leq b |(EF)_2 \cap S_2|$,
so \eqref{eq:claim1-restate-3} reduces to 
\begin{equation}
\label{eq:claim1-restate-4}
ca_1|(EF)_1 \cap S_1| < a_2|(EF)_1| \ .
\end{equation} 
Using \eqref{eq:slope} we see that \eqref{eq:claim1-restate-4} is in turn a consequence of 
\begin{equation}
\label{eq:claim1-restate-5}
c |(EF)_1 \cap S_1| < |(EF)_2| \ .
\end{equation}

We prove \eqref{eq:claim1-restate-5} by means of the following construction. To start we set $E(0) = E$. Since $(S_1, S_2)$ is compatible and the first case of the condition (1.14) cannot be satisfied by the definition of the shadow, there must exist a point $F(0) \in E(0) F^\circ$ such that $F(0)$ is the upper endpoint of a vertical edge in $\mathcal{D}$, and  $c |(E(0)F(0))_1 \cap S_1| = |(E(0) F(0))_2|$. 
If $(F(0) F)_1 \cap S_1 \neq \emptyset$, we denote by $E(1)$ the left endpoint of the leftmost edge in $(F(0) F)_1 \cap S_1$, and then find $F(1) \in E(1) F^\circ$
so that $c |(E(1)F(1))_1 \cap S_1| = |(E(1) F(1))_2|$.
Continuing in the same way, we construct a sequence of pairs $(E(0), F(0)), \dots, (E(r),F(r))$ terminating when $(F(r) F)_1 \cap S_1 = \emptyset$.
As a result we have 
$$c |(EF)_1 \cap S_1| = \sum_{s=0}^r c |(E(s)F(s))_1 \cap S_1| = \sum_{s=0}^r |(E(s)F(s))_2| \leq |(EF(r))_2| < |(EF)_2| \ ,$$
proving \eqref{eq:claim1-restate-5} and completing the proof of \eqref{claim1}.  

\medskip

\noindent \emph{Proof of \eqref{claim2}:} this claim is obtained from \eqref{claim1} by obvious symmetry replacing $(b,c,a_1,a_2,S_1,S_2)$ with 
$(c,b,a_2,a_1,S^T_2,S^T_1)$, where $S^T_2 = \{u_j: v_{a_2 + 1- j} \in S_2\}$,  and $S^T_1 = \{v_i: u_{a_1 + 1- i} \in S_1\}$.

\medskip

\noindent \emph{Proof of \eqref{claim3}:} first consider the case where all the local shadows $\SH(v;S_2)$ are proper subsets of $\mathcal{D}_1$.
Recalling \eqref{eq:local-shadows1}, we conclude that in this case we have $|\SH(S_2)| = b |S_2| = a_1$. 
Now observe that the proof of \eqref{claim1} applies verbatim in this case, and so \eqref{eq:claim1-restate} still holds, yielding
$$c|S_1| < \frac{ba_2|S_2|}{a_1} = a_2 \ ,$$
as desired. 

The symmetry described in the proof of \eqref{claim2} takes care of the case where the assumption of \eqref{eq:local-shadows1} is satisfied after the replacement
of $(b,c,a_1,a_2,S_1,S_2)$ with $(c,b,a_2,a_1,S^T_2,S^T_1)$. 
Thus it remains to consider the case where $\SH(v^\circ;S_2) = \mathcal{D}_1$ for some $v^\circ \in S_2$, and the same condition holds after the above mentioned symmetry.
By the definition of the shadow, the first case of the condition \eqref{0407df:comp} cannot be satisfied for $v = v^\circ$. 
By symmetry, there is also an edge $u^\circ \in S_1$ such that the second case of \eqref{0407df:comp} cannot be satisfied for $u = u^\circ$. 
It follows that the pair $(u^\circ, v^\circ)$ violates \eqref{0407df:comp}, making the pair $(S_1,S_2)$ not compatible, in contradiction to our assumption.
This completes the proof of Proposition~\ref{prop:support}.
\end{proof}

\section{Proof of \eqref{eq:cpq-dyck-recurrences}}
\label{2nd technical statement}

We deduce \eqref{eq:cpq-dyck-recurrences} from the results in Section~\ref{Upper bounds for supports}.
Recall that our goal is to show that, for each positive integers $a_1$ and $a_2$,  the coefficients $c(p,q)$ 
given by \eqref{eq:cpq-dyck}
satisfy the recurrence relations \eqref{eq:c-recurrence2}. 
Our usual symmetry considerations show that it suffices  to prove the second equality in \eqref{eq:c-recurrence2}:
$$c(p,q)= \sum_{k=1}^q (-1)^{k-1}
 c(p,q-k)\binom{a_1\!-\!bp\!+\!k\!-\!1}{k}$$ 
whenever $(p,q) \neq (0,0)$, and $ca_1q \geq b a_2 p$.
We need to consider several cases.

\medskip

\noindent {\bf Case 1:}  Suppose that $a_1 \leq bp$.
Since all binomial coefficients in the right hand side of the desired equality are equal to $0$, we need to show that $c(p,q) = 0$. 
Note that $c(p,q)$ is the coefficient of $x_1^{-a_1 + bp} x_2^{-a_2 + cq}$ in the Laurent expansion of $x[a_1,a_2]$.  
Now observe that the assumptions $a_1 \leq bp$ and $c a_1q \geq ba_2p$ imply that $cq \geq a_2$. 
Thus the lattice point $(-a_1 + bp, -a_2 + cq)$ lies in the positive quadrant $\ZZ_{\geq 0}^2$, and the desired equality $c(p,q) = 0$
follows from Corollary~\ref{cor:out-of-1st-quadrant}. 

\medskip

\noindent {\bf Case 2:} Now suppose that $bp < a_1$.
The difference between $c(p,q)$ and the right hand side of
the second equality in  \eqref{eq:c-recurrence2} is 
$$d(p,q)=\sum_{k=0}^q (-1)^{k}
 c(p,q-k)\binom{a_1\!-\!bp\!+\!k\!-\!1}{k}.$$
By Lemma~\ref{lemma:d-as-coefficient}, $d(p,q)$ is the
coefficient of a $x_2^{-a_2 + cq}x_3^{a_1-bp}$ in the Laurent 
expansion of $x[a_1,a_2]$ with respect to $\{x_2,x_3\}$. 
Applying the automorphism $\sigma_2$ and using \eqref{eq:sigma-Q}, we see that
$d(p,q)$ is also the coefficient
of $x_1^{a_1-bp}x_2^{-a_2 + cq}$ in the expansion of $\sigma_2(x[a_1,a_2])=x[a'_1,a_2]$, where $a'_1=ba_2-a_1$.
In other words, if we denote the coefficients in the expansion \eqref{eq:pointed-expansion} of $x[a'_1,a_2]$ by $c'(p,q)$ then 
we have $d(p,q) = c'(p',q)$, where $p' = a_2 - p$.
Thus we need to show that $c'(p',q) = 0$ under the following conditions obtained by 
expressing our current assumptions on $p,q,a_1,a_2$ in terms of $p',q,a'_1,a_2$:
\begin{align}
\label{eq:p'a'-1}
& a_2 > 0, \,\, ba_2 > a'_1 \ ;\\
\label{eq:p'a'-2}
& (p',q) \in \ZZ_{\geq 0}^2, \,\, (p',q) \neq (a_2,0), \,\, bp' > a'_1 \ ;\\ 
\label{eq:p'a'-3}
& c(ba_2 - a'_1) q \geq ba_2(a_2 - p') \ .
\end{align}

We will use Proposition~\ref{prop:support} to show that, under the assumption \eqref{eq:p'a'-1}, a lattice point $(p',q)$ satisfying
\eqref{eq:p'a'-2} - \eqref{eq:p'a'-3} \emph{cannot} belong to the pointed support $PS[a'_1,a_2]$. 
Our arguments are based on the following key observation: let 
$$L = \{(p',q) \in \RR^2: c(ba_2 - a'_1) q = ba_2(a_2 - p')\} \ ;$$
then $L$ is a straight line with a negative slope passing through the point $(a_2,0)$,
and the condition \eqref{eq:p'a'-3} means that $(p',q)$ lies on or above this straight line. 

Now we have the following three subcases.

\medskip

\noindent {\bf Subcase 2.1:} Suppose that $a'_1 \leq 0$. 
As shown in Case~(2) in Proposition~\ref{prop:support}, the pointed support $PS[a'_1,a_2]$  is the set of lattice points 
in the closed segment $[(0,0), (a_2,0)]$. 
This segment lies below the line~$L$ (with the exception of the point $(a_2,0)$ that belongs to $L$).
Thus $(a_2,0)$ is the only point from $PS[a'_1,a_2]$ that satisfies \eqref{eq:p'a'-3}. 
But this point is excluded by \eqref{eq:p'a'-2}, so we are done. 

\medskip

\noindent {\bf Subcase 2.2:} Suppose that $0 < a'_1 < ba_2$, and $a_2 \geq ca'_1$. 
Then we are in Case (5) of Proposition~\ref{prop:support}. 
To prove our claim it is enough to show that the trapezoid described there (with $a_1$ replaced by $a'_1$) lies on the 
``wrong side" (that is strictly below) of $L$. 
It is enough to show that the vertex $(p',q) = (a_2 - ca'_1, a'_1)$ cannot satisfy \eqref{eq:p'a'-3}. 
But this is clear since substituting these values of $p'$ and $q$ into \eqref{eq:p'a'-3} and simplifying, we get an 
obviously false inequality $c  {a'_1}^2 \leq 0$.

\medskip

\noindent {\bf Subcase 2.3:} Finally suppose that $0 < a'_1 < ba_2$, and  $0 < a_2 < ca'_1$.
Now we are in Case (6) of Proposition~\ref{prop:support}.
Note that substituting $(p',q) = (a'_1/b, a_2/c)$ into \eqref{eq:p'a'-3}, we get an equality.
Thus in this case $L$ is the straight line through the points $(a_2,0)$ and  $(a'_1/b, a_2/c)$.
Recall also that \eqref{eq:p'a'-2} includes the condition $p' > a'_1/b$. 
By Case (6) in Proposition~\ref{prop:support}, we conclude that $(a_2,0)$ is the only point from $PS[a'_1,a_2]$ that has a chance to satisfy
\eqref{eq:p'a'-2} and \eqref{eq:p'a'-3}. 
Since (as in Subcase 2.1) this point is excluded by another condition in \eqref{eq:p'a'-2}, we are done in this case too, 
finishing the proof of \eqref{eq:cpq-dyck-recurrences}. 
\qed

\section{Greedy elements form a basis}
\label{greedy basis}

In this section we prove that the greedy elements $x[a_1, a_2]$ for $(a_1,a_2) \in \ZZ^2$ form a $\ZZ$-basis in the
cluster algebra $\myAA(b,c)$, the last result from Section~\ref{sec:intro} that still remains unproven. 
The main idea of the proof is similar to that in \cite{bz-triangular}: we compare the family of greedy elements with a known
basis in $\myAA(b,c)$ formed by \emph{standard monomials}. 
Specifically, for each  $(a_1,a_2) \in \ZZ^2$ we define an element $z[a_1,a_2] \in \myAA(b,c)$ by setting
$$z[a_1,a_2] = x_0^{[a_2]_+} x_1^{[-a_1]_+} x_2^{[-a_2]_+} x_3^{[a_1]_+} \ .$$
As a special case of \cite[Theorem~1.16]{fz-ClusterIII} we have:
\begin{equation}
\label{eq:standard-monomials-basis}
\text{The elements $z[a_1,a_2]$ for all $(a_1,a_2) \in \ZZ^2$ form a $\ZZ$-basis in $\myAA(b,c)$ .}
\end{equation}

We need just two properties of this basis (the first one is immediate from the definitions, and the second follows at once from 
Remark~\ref{rem:greedy} (a) and \eqref{eq:2-3-quarter}):
\begin{align}
\label{eq:standard-mon-pointed}
&\text{Every element $z[a_1,a_2]$ is pointed at $(a_1,a_2)$ (see Definition~\ref{df:pointed});}\\
\label{eq:standard-agrees-greedy}
&\text{If $(a_1,a_2) \in \ZZ^2 - \ZZ_{>0}^2$ then $z[a_1,a_2] = x[a_1,a_2]$.}
\end{align}

Inspired by \cite{bz-triangular}, we introduce the following partial order on $\ZZ^2$:
\begin{equation}
\label{eq:partial-order}
(b_1,b_2) \prec (a_1,a_2) \Longleftrightarrow [b_1]_+ + [b_2]_+ < [a_1]_+ + [a_2]_+ \ .
\end{equation}

\begin{lemma}
\label{lem:unitriangular}
If $(a_1,a_2) \in \ZZ_{> 0}^2$ then the expansion  of the greedy element $x[a_1,a_2]$ in the basis of standard monomials
is of the form
\begin{equation}
\label{eq:expansion-triangular} 
x[a_1,a_2] = z[a_1,a_2] + \sum_{(b_1,b_2) \prec (a_1,a_2)} u(b_1,b_2; a_1,a_2) z[b_1,b_2] \ ,
\end{equation}
where all the coefficients $u(b_1,b_2; a_1,a_2)$ are integers, and only finitely many of them are nonzero.
\end{lemma}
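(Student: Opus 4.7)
My plan is to read off the unitriangular expansion \eqref{eq:expansion-triangular} directly from the explicit formula for $z[b_1,b_2]$ together with the pointedness of $x[a_1,a_2]$, without invoking the Dyck-path formula or Proposition~\ref{prop:support}. Substituting $x_0 = (x_1^b+1)/x_2$ and $x_3 = (x_2^c+1)/x_1$ into the definition of $z[b_1,b_2]$ yields
\[ z[b_1,b_2] = x_1^{-b_1}x_2^{-b_2}(x_1^b+1)^{[b_2]_+}(x_2^c+1)^{[b_1]_+}, \]
so the support of $z[b_1,b_2]$ (as a Laurent polynomial in $x_1,x_2$) is contained in the translated quadrant $\{(d_1,d_2):d_1\geq -b_1,\,d_2\geq -b_2\}$ and contains the corner $(-b_1,-b_2)$ with coefficient $1$. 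By \eqref{eq:standard-monomials-basis}, there is a unique finite expansion $x[a_1,a_2] = \sum u(b_1,b_2;a_1,a_2)\,z[b_1,b_2]$; write $S$ for the (finite) set of indices with $u(b_1,b_2;a_1,a_2)\neq 0$.

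The core of the argument is a lex-maximum trick, run once in each coordinate direction. Choose $(b_1^*,b_2^*)\in S$ that maximizes $b_1$ and, among such, maximizes $b_2$. Then the support description above forces that no other $(b_1,b_2)\in S$ can contribute to the monomial $x_1^{-b_1^*}x_2^{-b_2^*}$ on the right-hand side, so its coefficient there equals $u(b_1^*,b_2^*;a_1,a_2)\neq 0$. Hence $(-b_1^*,-b_2^*)$ lies in the support of $x[a_1,a_2]$, which by pointedness (Definition~\ref{df:pointed}) gives $-b_1^*\geq -a_1$; thus $b_1\leq a_1$ for every $(b_1,b_2)\in S$. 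Swapping the roles of the two coordinates (maximizing $b_2$ first) yields $b_2\leq a_2$ for every $(b_1,b_2)\in S$.

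Since $a_1,a_2>0$, these two bounds give $[b_1]_+ + [b_2]_+ \leq a_1+a_2 = [a_1]_+ + [a_2]_+$, with equality only when $(b_1,b_2) = (a_1,a_2)$, which is exactly the statement that every $(b_1,b_2)\in S\setminus\{(a_1,a_2)\}$ satisfies $(b_1,b_2)\prec(a_1,a_2)$. The leading coefficient $u(a_1,a_2;a_1,a_2)=1$ then follows by comparing the coefficients of $x_1^{-a_1}x_2^{-a_2}$ on both sides: on the left it equals $c(0,0)=1$, while on the right only $(b_1,b_2)=(a_1,a_2)$ can contribute (the bounds $b_1\leq a_1$ and $b_2\leq a_2$ combined with the support description of $z[b_1,b_2]$ rule out all other terms), with coefficient $\binom{a_2}{0}\binom{a_1}{0}=1$. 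I do not foresee a genuine obstacle; the only mildly subtle point is recognizing that the two inequalities $b_1\leq a_1$ and $b_2\leq a_2$ must be extracted \emph{separately}, one via each of the two coordinate-wise maximum choices, and that finiteness of the expansion is automatic from the basis property rather than something to be checked by hand.
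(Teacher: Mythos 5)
Your proof is correct, and it rests on the same two ingredients as the paper's: the pointedness of $x[a_1,a_2]$ and the observation that each standard monomial $z[b_1,b_2]$ is itself pointed at $(b_1,b_2)$, so its support lies in the quadrant $\{(d_1,d_2): d_1\ge -b_1,\ d_2\ge -b_2\}$ with corner coefficient $1$. The execution differs in a small but genuine way. The paper selects a single extremal index maximizing $[b_1]_++[b_2]_+$ among those occurring in the expansion and argues that its corner monomial $x_1^{-b_1}x_2^{-b_2}$ can be matched neither by $x[a_1,a_2]$ nor by the other standard monomials. You instead run the extremal argument twice, once per coordinate (lexicographic maximum in $b_1$ then $b_2$, and vice versa), and extract the two separate bounds $b_1\le a_1$ and $b_2\le a_2$ for \emph{every} index in the expansion; triangularity with respect to $\prec$ then follows because $a_1,a_2>0$ forces equality in $[b_1]_++[b_2]_+\le a_1+a_2$ only at $(a_1,a_2)$. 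Your variant is in fact the more airtight of the two: maximizing $[b_1]_++[b_2]_+$ alone does not exclude the chosen corner monomial from also occurring in some other $z[b_1',b_2']$ with the same value of $[b_1']_++[b_2']_+$ (this can happen when, say, $b_1<b_1'\le 0$ and $b_2=b_2'$), so the paper's one-shot maximization implicitly relies on exactly the kind of coordinatewise tie-breaking you carry out. Your determination of the leading coefficient via the monomial $x_1^{-a_1}x_2^{-a_2}$, and your appeal to \eqref{eq:standard-monomials-basis} for finiteness, both match the paper.
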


\begin{proof}
Note that in view of \eqref{eq:standard-monomials-basis}, every greedy element $x[a_1,a_2]$ is a \emph{finite} integral linear combination of 
standard monomials. 
Thus we need to show only the triangular property in \eqref{eq:expansion-triangular}. 

Let $z[b_1,b_2]$ be an element that occurs in the expansion of $x[a_1,a_2]$ with a nonzero coefficient, and has the maximal possible value of 
$[b_1]_+ + [b_2]_+$. 
If $[b_1]_+ + [b_2]_+ > [a_1]_+ + [a_2]_+$ (or if $[b_1]_+ + [b_2]_+ = [a_1]_+ + [a_2]_+$, but $(b_1,b_2) \neq (a_1,a_2)$), 
then  the fact that all our elements are pointed implies that the monomial $x_1^{-b_1} x_2^{-b_2}$ appears in the Laurent expansion of
$z[b_1,b_2]$ but does not appear in $x[a_1,a_2]$ or any other element in its expansion.
Thus this case is impossible, proving our claim.  
\end{proof}

Now everything is ready for proving that the elements $x[a_1,a_2]$ form a $\ZZ$-basis in $\myAA(b,c)$. 
Clearly it is enough to show the following:
\begin{align}
\nonumber
&\text{For every finite subset $S \subset \ZZ^2$ there exists a finite subset $T  \subset \ZZ^2$}\\ 
\label{eq:finite-exhaust}  
&\text{such that $S \subseteq T$, and the families $\{x[a_1,a_2]: (a_1,a_2) \in T\}$}\\
\nonumber
&\text{and $\{z[a_1,a_2]: (a_1,a_2) \in T\}$
have the same linear $\ZZ$-spans.}
\end{align}

So let $S$ be a finite subset of $\ZZ^2$. 
Let $S_+ = S \cap \ZZ_{> 0}^2$ and $S_- = S - S_+ = S \cap (\ZZ^2 - \ZZ_{> 0}^2)$.
In view of \eqref{eq:standard-agrees-greedy}, there is nothing to prove if $S = S_-$, so we assume 
that $S_+ \neq \emptyset$.  
Let $d = \max \{a_1+a_2 \ : (a_1,a_2) \in S_+\}$, and define
$$T_+ = \{(a_1,a_2) \in \ZZ_{>0}^2 \ : a_1 + a_2 \leq d\} \ .$$
Thus we have $S_+ \subseteq T_+$. 

Now consider the expansions \eqref{eq:expansion-triangular}  for all $(a_1,a_2) \in T_+$, and 
define 
$$T_- = S_- \cup \{(b_1,b_2) \in \ZZ^2 - \ZZ_{> 0}^2 \ : u(b_1,b_2; a_1,a_2) \neq 0 \,\, 
{\rm for \,\, some} \,\, (a_1,a_2) \in T_+\} \ .$$
Let $T = T_+ \cup T_-$; by construction, the $\ZZ$-span of $\{x[a_1,a_2]: (a_1,a_2) \in T\}$ is contained in 
the $\ZZ$-span of $\{z[a_1,a_2]: (a_1,a_2) \in T\}$. 
To show that these two spans are equal, it is enough to show that the transition matrix expressing the family 
$\{x[a_1,a_2]: (a_1,a_2) \in T\}$ in terms of the basis $\{z[a_1,a_2]: (a_1,a_2) \in T\}$ is invertible over $\ZZ$. 
To write this matrix explicitly, we need to choose a linear order on $T$; we do this by ordering $T_-$ arbitrarily, 
letting $T_+$ to go before $T_-$, and choosing an order of $T_+$ as an arbitrary linear extension of the partial order 
$``\prec"$.
In view of Lemma~\ref{lem:unitriangular} and \eqref{eq:standard-agrees-greedy}, the matrix in question then has the block-diagonal form
$$\begin{pmatrix} U & 0 \\
                  C & I 
\end{pmatrix} \ ,$$
where $U$ is a triangular matrix with $1$s on the diagonal, and $I$ is the identity matrix. 
Clearly, such a matrix is invertible over $\ZZ$, finishing the proof of Theorem \ref{main theorem} (c).
\qed

\section*{Acknowledgments} 
This paper owes a lot to Paul Sherman. 
Definition~\ref{df:greedy} and Proposition~\ref{pr:inequality-c(p,q)} first appeared in
an unpublished follow-up to \cite{sz-Finite-Affine} (with the same authors), and Proposition~\ref{pr:B-introduction-linear}, Theorem~\ref{main theorem}
and Proposition~\ref{co:sigma-symmetry} were stated there as conjectures. 
This was almost a decade ago when Paul was a Master's student at Northeastern under the guidance of the third author. 
After getting his degree Paul has left academia to pursue other interests. 
If he ever decides to come back to research in mathematics, he is very welcome!
 
We are grateful to Gregg Musiker,
Dylan Rupel and Ralf Schiffler for valuable discussions and to the anonymous referee for a very thorough reading of the paper and many useful comments.


\begin{thebibliography}{99}


\bibitem{fz-ClusterIII}
A.~Berenstein, S.~Fomin and A.~Zelevinsky,
Cluster Algebras III: Upper bounds and double Bruhat cells,
\textsl{Duke Math. J.} \textbf{126} (2005), no. 1, 1--52.

\bibitem{bz-triangular}
A.~Berenstein and A.~Zelevinsky,
Triangular bases in quantum cluster algebras,
\texttt{arXiv:1206.3586}.




\bibitem{fz-Laurent}
S.~Fomin and A.~Zelevinsky, The Laurent phenomenon,
\textsl {Adv. in Applied Math.} \textbf{28} (2002), 119--144.


\bibitem{fz-ClusterI}
S.~Fomin and A.~Zelevinsky, Cluster Algebras I: Foundations,
\textsl{J.\ Amer.\ Math.\ Soc.} \textbf{15} (2002), 497--529.

\bibitem{ca4}
S.~Fomin and A.~Zelevinsky,
Cluster algebras~IV: Coefficients,
\textsl{Comp.\ Math.} \textbf{143} (2007), 112--164.



\bibitem{kac}
V.~Kac, \textsl{Infinite dimensional Lie algebras}, 3rd edition,
Cambridge University Press, 1990.

\bibitem{ls-comm}
K.~Lee and R.~Schiffler, A combinatorial formula for rank 2 cluster variables, \textsl{J. Algebraic Combin.}, to appear.

\bibitem{ls-noncomm}
K.~Lee and R.~Schiffler, Proof of a positivity conjecture of M. Kontsevich on noncommutative cluster variables,  \textsl{Compos. Math.}, to appear.

\bibitem{plamondon}
P.-G.~Plamondon,
Generic bases for cluster algebras from the cluster category, \texttt{arXiv:1111.4431}.

\bibitem{r-noncomm}
D.~Rupel, Proof of the Kontsevich Non-Commutative Cluster Positivity Conjecture, \texttt{arXiv:1201.3426}.

\bibitem{sz-Finite-Affine}
P.~Sherman and A.~Zelevinsky, Positivity and Canonical Bases in
Rank 2 Cluster Algebras of Finite and Affine Types, \textsl{Mosc. Math. J.} \textbf{4} (2004), no. 4, 947--974.

\end{thebibliography}
\end{document}